\documentclass[11pt]{article}
\usepackage{amssymb,amsthm,amsmath,float,cite,enumerate}
\usepackage[linesnumbered,ruled,lined]{algorithm2e}
\usepackage{geometry}\geometry{a4paper, left=2cm,right=2cm, top=2cm, bottom=2.5cm} 
\usepackage{tikz}

\newtheorem{theorem}{Theorem}[section]
\newtheorem{lemma}[theorem]{Lemma}
\newtheorem{corollary}[theorem]{Corollary}

\newtheorem{claim}{Claim}[section]
\newtheorem*{claim*}{Claim}

\usepackage{setspace}
\usepackage{lineno}

\begin{document}

\onehalfspace

\title{On some tractable and hard instances for\\ partial incentives and target set selection}

\author{ Stefan Ehard \and Dieter Rautenbach}

\date{}

\maketitle

\begin{center}
Institut f\"{u}r Optimierung und Operations Research, 
Universit\"{a}t Ulm, Ulm, Germany,
\{\texttt{stefan.ehard,dieter.rautenbach}\}\texttt{@uni-ulm.de}\\[3mm]
\end{center}

\begin{abstract}
A widely studied model for influence diffusion in social networks are {\it target sets}.
For a graph $G$ and an integer-valued threshold function $\tau$ on its
vertex set, a {\it target set} or {\it dynamic monopoly} is a set of vertices of $G$ such that
iteratively adding to it vertices $u$ of $G$ that have at least
$\tau(u)$ neighbors in it eventually yields the entire vertex set of $G$. 
This notion is limited to the binary choice 
of including a vertex in the target set or not,
and Cordasco et al.~proposed {\it partial incentives}
as a variant allowing for intermediate choices.

We show that finding optimal partial incentives is 
hard for chordal graphs and planar graphs
but tractable 
for graphs of bounded treewidth and 
for interval graphs with bounded thresholds.
We also contribute some new results about target set seletion on planar graphs
by showing the hardness of this problem, and by
describing an efficient $O(\sqrt{n})$-approximation algorithm 
as well as a PTAS for the dual problem of finding a maximum degenerate set.
\end{abstract}

{\small 
\begin{tabular}{lp{13cm}}
{\bf Keywords:} & Dynamic monopoly; target set; partial incentive; degenerate set; 
chordal graph;
planar graph;
treewidth;
interval graph
\end{tabular}
}

\section{Introduction}
Target sets, also known as dynamic monopolies,
are a popular model for social influence and the spread of opinions in social networks 
\cite{drro,keklta}.
For unrestricted instances, optimal target sets are hard to find 
\cite{cedoperasz,ch},
and efficient algorithms are only known for quite restricted instances 
such as tree-structured graphs 
\cite{behelone,beehpera,kylivy}.
Constructing a target set involves binary decisions for every vertex of the underlying graph,
and Cordasco et al.~\cite{cogareva} recently proposed 
so-called partial incentives as a more subtle way of influencing a network.
Next to 
hardness results and bounds 
similar to those known for target sets \cite{acbewo,ch,re},
they describe an efficient algorithm
computing optimal partial incentives 
for trees and complete graphs.
In the present paper we present several new results concerning 
the complexity as well as tractable cases of partial incentives and (the dual of) target sets in planar graphs. 

Before we can explain our contribution in detail,
we need some terminology and notation.

We only consider finite, simple, and undirected graphs.
A {\it threshold function} for a graph $G$ is a function 
from its vertex set $V(G)$ to the set of integers.
Let $\tau\in \mathbb{Z}^{V(G)}$ be a threshold function for $G$.
For a set $D$ of vertices of $G$,
the {\it hull $H_{(G,\tau)}(D)$ of $D$ in $(G,\tau)$}
is the smallest set $H$ of vertices of $G$ such that 
$D\subseteq H$, and 
$u\in H$ for every vertex $u$ of $G$ with $|H\cap N_G(u)|\geq \tau (u)$.
The set $H_{(G,\tau)}(D)$ is obtained 
by starting with $D$, and 
iteratively adding vertices $u$ 
that have at least $\tau(u)$ neighbors in the current set
as long as possible.
A set $D$ of vertices of $G$ is a {\it dynamic monopoly} or a {\it target set} of
$(G,\tau)$ if $H_{(G,\tau)}(D)=V(G)$. The minimum order
of a dynamic monopoly of $(G,\tau)$ is denoted by ${\rm dyn}(G,\tau)$.

It has been observed several times~\cite{ch,acbewo,re} 
that the dual of the notion of a dynamic monopoly
is the notion of a degenerate set. 
For a function $\kappa\in\mathbb{Z}^{V(G)}$, 
a set $I$ of vertices of $G$ is {\it $\kappa$-degenerate} in $G$
if there is a linear order $u_1,\ldots,u_k$ of the vertices in $I$
such that $u_i$ has at most $\kappa(u_i)$ neighbors in $\{ u_j:j\in [i-1]\}$ for every $i\in [k]$,
where $[n]$ denotes the set of positive integers at most $n$ 
for every integer $n$, and $[n]_0=[n]\cup\{0\}$.
The maximum cardinality of a $\kappa$-degenerate set of $G$ is denoted by $\alpha(G,\kappa)$.
If $\kappa,\tau\in\mathbb{Z}^{V(G)}$ are two functions 
such that the degree $d_G(u)$ of every vertex $u$ of $G$
equals $\tau(u)+\kappa(u)$,
then 
a set $I$ of vertices of $G$ is $\kappa$-degenerate in $G$
if and only if 
$V(G)\setminus I$ is a dynamic monopoly of $(G,\tau)$.
This duality generalizes the well-known duality
between independent sets and vertex covers.

A function $\sigma\in\mathbb{N}_0^{V(G)}$ 
is a {\it partial incentive} of $(G,\tau)$~\cite{cogareva} if 
$H_{(G,\tau-\sigma)}(\emptyset)=V(G)$,
that is, reducing the initial thresholds $\tau$ 
as specified by $\sigma$,
the empty set becomes a dynamic monopoly.
Throughout this paper,
we define the {\it weight} of a function $f$ with domain $D$
as $f(D)=\sum\limits_{d\in D}f(d)$.
Let ${\rm pi}(G,\tau)$ denote the minimum weight
of a partial incentive of $(G,\tau)$,
and a partial incentive of $(G,\tau)$ of weight 
${\rm pi}(G,\tau)$ is called {\it optimal}.

\medskip
Our first contributions are complexity results.

We show NP-completeness for dynamic monopolies 
in planar graphs, which appears to be unknown. 

\begin{theorem} \label{thm:npdynplanar}
For a given triple $(G,\tau,k)$, 
where $G$ is a planar graph, 
$\tau$ is a threshold function for $G$, 
and $k$ is a positive integer, 
it is NP-complete to decide whether ${\rm dyn}(G,\tau)\leq k$.
\end{theorem}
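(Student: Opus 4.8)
The plan is to verify membership in NP directly and then obtain hardness for free from the degeneracy/target-set duality recalled in the introduction, reducing from the maximum independent set problem on planar graphs. Write $n=|V(G)|$. For membership, a set $D$ with $|D|\leq k$ is a polynomial-size certificate: starting from $D$ one repeatedly scans $V(G)$ and adds any vertex $u$ with $|H\cap N_G(u)|\geq \tau(u)$, terminating after at most $n$ rounds, each clearly polynomial; one accepts iff the resulting hull $H_{(G,\tau)}(D)$ equals $V(G)$. Thus deciding ${\rm dyn}(G,\tau)\leq k$ lies in NP.

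For hardness I would exploit the case $\kappa\equiv 0$ of the stated duality. The key elementary observation is that a set $I$ is $0$-degenerate in $G$ if and only if $I$ is independent: the defining order would require each $u_i$ to have at most $0$ neighbors among its predecessors, which forces $I$ to span no edge, and conversely every independent set is trivially $0$-degenerate under an arbitrary order. Now choose the threshold $\tau=d_G$, so that $\tau(u)+\kappa(u)=d_G(u)$ holds with $\kappa\equiv 0$. The duality then asserts that $V(G)\setminus I$ is a dynamic monopoly of $(G,d_G)$ exactly when $I$ is independent, whence
${\rm dyn}(G,d_G)=n-\alpha(G)$,
where $\alpha(G)$ denotes the independence number of $G$.

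Given an instance $G$ of the planar maximum independent set problem together with an integer $t$, asking whether $\alpha(G)\geq t$, I would output the triple $(G,d_G,n-t)$. The graph $G$ is left unchanged and is therefore still planar, $\tau=d_G$ is computable in linear time, and $k=n-t$ is immediate, so the reduction is polynomial. By the identity above, ${\rm dyn}(G,d_G)\leq n-t$ holds if and only if $\alpha(G)\geq t$. Since deciding $\alpha(G)\geq t$ is well known to be NP-complete for planar graphs, this yields NP-hardness, and combined with membership in NP gives NP-completeness.

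The only points requiring care are the correctness of this instantiation of the duality, in particular the basic but essential fact that $0$-degenerate sets coincide with independent sets, and the invocation of planar maximum independent set as the NP-complete source problem; no gadget construction is needed, so I expect no substantial technical obstacle beyond assembling these ingredients cleanly.
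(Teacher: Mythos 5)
Your proposal is correct, and it proves the stated theorem by a genuinely different and more direct route than the paper. The paper reduces from \textsc{Vertex Cover} on planar graphs via a gadget construction (its Lemma 2.1): for each edge $uv$ of the planar vertex-cover instance $G$ it adds an independent set $I(uv)$ of $n$ vertices joined to $u$ and $v$, and gives original vertices the threshold $d_G(u)\,n$ and new vertices the threshold $1$; it then argues that dynamic monopolies of the enlarged graph correspond to vertex covers of $G$. You instead leave the graph untouched, set $\tau=d_G$, and invoke the degeneracy duality with $\kappa\equiv 0$: your verification that $0$-degenerate sets are exactly independent sets is sound (any edge inside $I$ would force each endpoint to precede the other), and one can also see the equivalence directly, since with unanimity thresholds a vertex outside $D$ enters the hull only if all its neighbors are already there, so $D$ is a dynamic monopoly precisely when $V(G)\setminus D$ is independent, giving ${\rm dyn}(G,d_G)=n-\alpha(G)$. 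Your source problem (planar \textsc{Independent Set}) and the paper's (planar \textsc{Vertex Cover}) are complementary and both NP-complete by the same reference. What your argument buys is simplicity: no gadget, no blow-up of the instance, and hardness already for the extreme threshold family $\tau=d_G$. What the paper's construction buys is that its hard instances have every threshold strictly below the corresponding degree (original vertices have degree $d_G(u)(n+1)$ against threshold $d_G(u)n$, new vertices degree $2$ against threshold $1$), so the hardness does not rest on the degenerate ``unanimity'' case in which the problem collapses outright to vertex cover; the theorem as stated, however, does not require this, so your proof is a complete and valid substitute.
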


Exploiting results from \cite{behelone,beehpera},
we establish the following hardness results for partial incentives.

\begin{theorem}\label{thm:hardpitw}
For a given pair $(G,\tau)$, 
where $G$ is a graph of order $n$ and treewidth $w$, and 
$\tau$ is a threshold function for $G$, 
an optimal partial incentive of $(G,\tau)$ 
cannot be computed in time $n^{o(\sqrt{w})}$ 
unless all problems in SNP can be solved in sub-exponential time.
\end{theorem}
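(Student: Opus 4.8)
The plan is to transfer the lower bound for \emph{cardinality} target set selection established in \cite{behelone} to partial incentives by a polynomial-time reduction that increases the treewidth only by a constant factor. Recall from \cite{behelone} that, on graphs of treewidth $w$, deciding whether ${\rm dyn}(G,\tau)\le k$ cannot be done in time $n^{o(\sqrt w)}$ unless all problems in SNP can be solved in sub-exponential time. Hence it suffices to construct, from a target set instance $(G,\tau)$ with $|V(G)|=n$ and treewidth $w$, a pair $(H,\tau_H)$ with $|V(H)|=\mathrm{poly}(n)$ and treewidth $O(w)$ such that ${\rm pi}(H,\tau_H)={\rm dyn}(G,\tau)$; an algorithm computing optimal partial incentives in time $|V(H)|^{o(\sqrt{{\rm tw}(H)})}$ would then decide ${\rm dyn}(G,\tau)\le k$ in time $n^{o(\sqrt w)}$. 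After the standard preprocessing that repeatedly removes a vertex $v$ with $\tau(v)>d_G(v)$ (which lies in every target set) and decrements the thresholds of its neighbours, I may assume $1\le \tau(v)\le d_G(v)$ for all $v$.

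The main difficulty is that a partial incentive is strictly more powerful than a target set: instead of fully seeding a vertex it may reduce any threshold by an arbitrary amount, so a reduction on the same instance would compute ${\rm pi}(G,\tau)$ rather than ${\rm dyn}(G,\tau)$. The core of the construction is therefore a gadget that makes fully activating a vertex cost exactly one unit while making every \emph{partial} threshold reduction prohibitively expensive. For each vertex $v$ of $G$ I would introduce three vertices $v_{\rm in},v_{\rm out},v_{\rm seed}$, where $v_{\rm out}$ and $v_{\rm seed}$ have threshold $1$ and $v_{\rm in}$ has threshold $M\tau(v)$ for $M=n+1$, with edges $v_{\rm in}v_{\rm out}$ and $v_{\rm out}v_{\rm seed}$; the pendant gate $v_{\rm seed}$ can only be activated by paying one unit, whereupon it activates $v_{\rm out}$, which represents ``$v$ is active''. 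Each edge $vw$ of $G$ is replaced by $M$ \emph{connectors} of threshold $1$ joining $w_{\rm out}$ to $v_{\rm in}$ (and symmetrically $M$ connectors joining $v_{\rm out}$ to $w_{\rm in}$), so that an active real neighbour contributes exactly $M$ to the count at $v_{\rm in}$.

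With this amplification, $v_{\rm in}$ becomes active via the cascade precisely when $\tau(v)$ of the real neighbours of $v$ are active, faithfully reproducing the dynamics of $(G,\tau)$, whereas forcing $v_{\rm in}$ to activate with one real neighbour missing requires compensating a deficit of at least $M$ units. For the forward direction, seeding $v_{\rm seed}$ for each $v$ in a minimum target set $D$ activates all of $H$ at cost $|D|$, giving ${\rm pi}(H,\tau_H)\le {\rm dyn}(G,\tau)$. For the reverse direction I would use that $n$ unit seeds always suffice, so an optimal incentive has weight at most $n<M$ and can never afford any reduction of value $\ge M$; consequently it never tampers with the $v_{\rm in}$ vertices or the connectors, every unit it spends forces some $v_{\rm out}$ directly, and the corresponding set $D=\{v:v_{\rm out}\text{ is forced}\}$ must be a target set of $(G,\tau)$, yielding ${\rm dyn}(G,\tau)\le {\rm pi}(H,\tau_H)$. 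Verifying that the amplification indeed rules out all cheaper, ``fractional'' activation strategies is the step I expect to require the most care.

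Finally I would bound the treewidth. Starting from a tree decomposition of $G$ of width $w$, replacing each vertex $x$ by $\{x_{\rm in},x_{\rm out},x_{\rm seed}\}$ yields bags of size at most $3(w+1)$, and every connector of an edge $vw$ can be inserted, one at a time, into an auxiliary bag obtained by adding it to a blown-up bag already containing $v_{\rm in},v_{\rm out},w_{\rm in},w_{\rm out}$. Thus ${\rm tw}(H)=O(w)$, while $|V(H)|=O(n+|E(G)|\,M)=\mathrm{poly}(n)$, so the parameters transform as required and the lower bound of \cite{behelone} carries over to ${\rm pi}$.
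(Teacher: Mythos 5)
Your overall plan coincides with the paper's: transfer the lower bound of Ben-Zwi et al.\ by a polynomial-time reduction that converts dynamic monopolies into partial incentives while keeping the treewidth $O(w)$ and the order polynomial. The problem is that your gadget is unsound, precisely at the step you flagged as delicate, but for a different reason than ``fractional'' strategies: threshold-$1$ connectors propagate influence \emph{backwards}. A connector between $w_{\rm out}$ and $v_{\rm in}$ activates as soon as \emph{either} endpoint is active; so the moment $v_{\rm in}$ fires, all connectors attached to $v_{\rm in}$ fire, and then every $w_{\rm out}$ with $w\in N_G(v)$ fires because its threshold is $1$. In effect your graph $H$ simulates a strictly stronger rule than $(G,\tau)$: whenever a vertex $v$ accumulates $\tau(v)$ active neighbours, \emph{all} neighbours of $v$ become active as well. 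Concretely, take $G$ with $V(G)=\{a,b,v,w,x\}$, edges $av,bv,vw,wx$, and $\tau(a)=\tau(b)=\tau(x)=1$, $\tau(v)=\tau(w)=2$. One checks that ${\rm dyn}(G,\tau)=2$ (no single seed cascades). But in $H$ the single unit $\sigma(v_{\rm seed})=1$ suffices: $v_{\rm out}$ activates $a_{\rm in}$ and $b_{\rm in}$ through their connector bundles, then $a_{\rm out}$ and $b_{\rm out}$ feed $2M$ connectors into $v_{\rm in}$, so $v_{\rm in}$ fires; now the $M$ connectors joining $w_{\rm out}$ to $v_{\rm in}$ fire, hence $w_{\rm out}$ fires, hence $x_{\rm in}$, $x_{\rm out}$, and finally $w_{\rm in}$ fire. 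So ${\rm pi}(H,\tau_H)=1<2={\rm dyn}(G,\tau)$, and your claimed equality, and with it the reverse direction ``$D=\{v: v_{\rm out}$ is forced$\}$ is a target set of $(G,\tau)$'', fails.

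The paper's reduction (its Lemma 2) avoids this trap by never creating auxiliary vertices adjacent to two different original vertices: it attaches to each vertex $u$ with $\tau(u)>0$ a path $P_u$ of order $\tau(u)$, joins $u$ to all of $P_u$, gives the path vertices threshold $1$, and leaves $\tau$ unchanged on $V(G)$. Paying one unit at an endvertex of $P_u$ cascades along $P_u$ and hands $u$ exactly $\tau(u)$ active neighbours, so seeding $u$ costs exactly $1$; conversely, any backward propagation from an active $u$ stays inside $P_u$ and cannot reach any other original vertex, which is what makes ${\rm dyn}(G,\tau)={\rm pi}(G',\tau')$ provable in both directions. Moreover this keeps the treewidth at most $\max\{w,2\}$ and the order at most $n^2$ (after the same preprocessing you use to bound the thresholds), so the $n^{o(\sqrt{w})}$ bound transfers exactly as in your final paragraph. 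If you want to rescue your amplification idea, you would need connectors that cannot be triggered from the $v_{\rm in}$ side, which an undirected threshold-$1$ vertex cannot provide; this directionality issue is the essential obstacle your write-up does not address.
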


\begin{theorem} \label{thm:nppichordal}
For a given triple $(G,\tau,k)$, 
where $G$ is a chordal graph, 
$\tau$ is a threshold function for $G$, and 
$k$ is a positive integer, 
it is NP-complete to decide whether ${\rm pi}(G,\tau)\leq k$.
\end{theorem}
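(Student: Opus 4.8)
The plan is to prove membership in NP and then give a polynomial reduction from a suitable NP-complete problem. Membership is routine: given $\sigma\in\mathbb{N}_0^{V(G)}$ of weight at most $k$, one simulates the hull process $H_{(G,\tau-\sigma)}(\emptyset)$ in polynomial time and checks that it exhausts $V(G)$; since an optimal $\sigma$ may be assumed to satisfy $\sigma(u)\le\max\{0,\tau(u)\}$ for every $u$, such a certificate has polynomial size. Before designing the reduction I would record the reformulation that drives everything: for a linear order $\pi$ of $V(G)$ let $b_\pi(u)$ denote the number of neighbors of $u$ preceding $u$ in $\pi$; then ${\rm pi}(G,\tau)=\min_\pi\sum_{u\in V(G)}\max\{0,\tau(u)-b_\pi(u)\}$, the minimum being over all linear orders. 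Indeed, $H_{(G,\tau-\sigma)}(\emptyset)=V(G)$ holds exactly when the vertices can be ordered so that each $u$ has at least $\tau(u)-\sigma(u)$ neighbors before it, and for a fixed order the cheapest feasible $\sigma$ sets $\sigma(u)=\max\{0,\tau(u)-b_\pi(u)\}$. Thus computing ${\rm pi}$ amounts to finding an elimination order minimizing the total threshold deficiency.

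For the hardness I would reduce from a canonical NP-complete problem. A key design constraint is that many selection problems (vertex cover, independent set, clique, coloring) are \emph{themselves} tractable on chordal graphs, so the reduction cannot embed an arbitrary instance through adjacency alone; instead the hard choice has to be carried by the \emph{threshold values}, which is also consistent with the companion result that interval graphs with bounded thresholds are tractable. Concretely, I would start from a vertex-cover (or $3$-SAT) instance and build a split graph, consisting of a clique $K$ forming a chordal ``backbone'' together with an independent set of ``tester'' vertices, each tester joined to the backbone vertices that encode the variables or endpoints of the constraint it represents, and then place large, carefully graded thresholds on $K$ so that, via the reformulation above, an optimal elimination order is forced to activate the backbone in an order that spells out a feasible solution, while each tester contributes a penalty exactly when its constraint is violated. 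The weight bound would be set to $c_0+c_1\cdot(\text{optimum of the source instance})$ for suitable constants $c_0,c_1$ depending only on the instance size.

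The correctness argument then splits into the two usual directions: from a solution of the source instance I would read off an elimination order (cover or true vertices first, testers next, the rest last) and use the reformulation to bound its total deficiency by $c_0+c_1\cdot(\text{opt})$; conversely, from an order of small total deficiency I would extract a feasible solution, arguing that any order that cheats a constraint pays strictly more. The main obstacle is precisely this threshold engineering on a chordal graph. A single clique backbone tends to \emph{cascade}: once a few backbone vertices are active the rest activate for free, which collapses the count of selected vertices and destroys the correspondence with the source optimum. Overcoming this is the crux, and I expect to need either genuinely large thresholds that ``price in'' the backbone edges, or a richer chordal structure such as a tree of cliques in place of one clique, so that activation cannot propagate through the backbone for free; verifying chordality of the resulting gadget and proving that no cascading order beats an honest solution is where the real work lies. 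As an alternative one can try to reuse the treewidth construction behind Theorem~\ref{thm:hardpitw}, observing that its instances can be taken chordal and that the thresholds and the target $k$ can be kept polynomially bounded.
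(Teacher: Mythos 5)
There is a genuine gap: your hardness direction is a plan, not a proof. By your own account, the crux of the split-graph construction --- preventing the clique backbone from cascading for free, verifying chordality of the gadget, and showing that no dishonest elimination order beats an honest one --- is left unresolved (``where the real work lies''). Your fallback suggestion does not close it either: the bounded-treewidth instances behind Theorem~\ref{thm:hardpitw} come from the reduction of Ben-Zwi et al.\ and are not chordal, and they could not simply ``be taken chordal,'' since chordal graphs of bounded treewidth admit a polynomial algorithm for these problems via that very paper; hardness on chordal graphs necessarily lives at unbounded treewidth, so that route as stated fails. Your NP-membership argument and the reformulation ${\rm pi}(G,\tau)=\min_\pi\sum_u\max\{0,\tau(u)-b_\pi(u)\}$ are fine, and your observation that the hardness must be carried by the thresholds rather than by adjacency is sound, but neither substitutes for an actual reduction.

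The idea you are missing is that one need not engineer any constraint gadgets at all: the right source problem is the dynamic monopoly (target set selection) problem on chordal graphs, which is already known to be NP-complete (Theorem~1.3 in \cite{beehpera}), and the paper's Lemma~\ref{lemma2} converts it to partial incentives while preserving chordality. Given a chordal $(G,\tau)$, attach to every vertex $u$ with $\tau(u)>0$ a path $P_u$ of order $\tau(u)$ joined completely to $u$ (a fan, so chordality is preserved), and give the new vertices threshold $1$. Then ${\rm dyn}(G,\tau)={\rm pi}(G',\tau')$: placing one unit of incentive on an endvertex of $P_u$ activates all of $P_u$ and hence $u$, so a target set of size $k$ yields a partial incentive of weight $k$; conversely any partial incentive can be normalized so that it spends at most one unit per fan and nothing on $V(G)$, and the fans receiving a unit form a target set. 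This makes the choice ``put $u$ in the target set'' literally interchangeable with ``spend one unit of incentive at $u$'s fan,'' which is exactly the cascade-proof correspondence your gadget design was struggling to achieve.
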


Finally, using Theorem \ref{thm:npdynplanar}, we obtain the following.

\begin{corollary}\label{corollary:planar}
For a given triple $(G,\tau,k)$, where $G$ is a planar graph, 
$\tau$ is a threshold function for $G$, 
and $k$ is a positive integer, 
it is NP-complete to decide whether ${\rm pi}(G,\tau)\leq k$.
\end{corollary}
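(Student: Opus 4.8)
The plan is to reduce the planar dynamic monopoly problem of Theorem~\ref{thm:npdynplanar} to the planar partial incentive problem. Membership in NP is immediate: a partial incentive $\sigma$ of weight at most $k$ is a polynomial-size certificate, since $H_{(G,\tau-\sigma)}(\emptyset)$ can be computed in polynomial time by simulating the activation process. For the hardness, given a planar instance $(G,\tau,k)$ of dynamic monopoly, I would construct in polynomial time a planar instance $(G',\tau')$ together with the same bound $k$ such that ${\rm pi}(G',\tau')={\rm dyn}(G,\tau)$; this yields the desired equivalence ${\rm dyn}(G,\tau)\le k \Leftrightarrow {\rm pi}(G',\tau')\le k$.

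The first ingredient is the general inequality ${\rm dyn}(G,\tau)\le {\rm pi}(G,\tau)$, valid for every $(G,\tau)$. Indeed, if $\sigma$ is any partial incentive, then its support $D=\{v:\sigma(v)\ge 1\}$ is a dynamic monopoly of $(G,\tau)$: one shows by induction along the activation order of the process realising $H_{(G,\tau-\sigma)}(\emptyset)=V(G)$ that every vertex lies in $H_{(G,\tau)}(D)$, the only vertices needing a head start being exactly those in $D$. Since $|D|\le \sigma(V(G))$, taking an optimal $\sigma$ gives the inequality. Applied to $G'$, this reduces the task to proving ${\rm dyn}(G',\tau')={\rm dyn}(G,\tau)$ together with the matching upper bound ${\rm pi}(G',\tau')\le {\rm dyn}(G,\tau)$.

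To build $G'$ I would attach to every vertex $v$ of $G$ a unit-cost activator gadget: a new vertex $e_v$ adjacent to $v$ and $\tau(v)$ further vertices $p_{v,1},\dots,p_{v,\tau(v)}$, each adjacent to both $v$ and $e_v$, setting $\tau'(e_v)=\tau'(p_{v,i})=1$ and leaving $\tau'(v)=\tau(v)$. Each gadget is planar and attaches inside a single face incident with $v$, so $G'$ remains planar, and distinct gadgets interact only through their own vertex $v$. Assigning a single unit of incentive to $e_v$ activates $e_v$, hence all pushers $p_{v,i}$, hence $v$, so seeding one gadget at unit cost simulates placing $v$ into a target set; conversely, as long as the gadget of $v$ is untouched, its vertices stay inactive until $v$ itself is active, so $v$ still needs $\tau(v)$ active original neighbours. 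This gives the easy bound: from a target set $D$ of $(G,\tau)$ of size ${\rm dyn}(G,\tau)$, the incentive placing one unit on each $e_v$ with $v\in D$ is a partial incentive of $(G',\tau')$ of weight $|D|$, whence ${\rm pi}(G',\tau')\le {\rm dyn}(G,\tau)$.

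The main obstacle is the reverse identity ${\rm dyn}(G',\tau')\ge {\rm dyn}(G,\tau)$, which requires a domination argument showing that any target set of $G'$ can be converted, without increasing its size, into one served entirely through the activators: every seed placed at or inside the gadget of $v$ can only ever assist the single original vertex $v$, so replacing it by $v$ yields a target set $D$ of $(G,\tau)$ with $|D|$ no larger, and a careful comparison of the two activation processes confirms $H_{(G,\tau)}(D)=V(G)$. Granting this, I would obtain ${\rm dyn}(G',\tau')={\rm dyn}(G,\tau)$, and combining it with the general inequality of the previous paragraph and the easy upper bound yields the chain ${\rm dyn}(G,\tau)={\rm dyn}(G',\tau')\le {\rm pi}(G',\tau')\le {\rm dyn}(G,\tau)$, so that ${\rm pi}(G',\tau')={\rm dyn}(G,\tau)$ exactly. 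With Theorem~\ref{thm:npdynplanar} supplying the NP-hardness of the source problem, this completes the reduction and proves the corollary.
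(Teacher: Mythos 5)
Your proposal is correct, and at the top level it coincides with the paper's reduction: both start from Theorem~\ref{thm:npdynplanar} and attach to each vertex $v$ a planar gadget of unit-threshold vertices such that a single unit of incentive anywhere in the gadget activates $v$, while an untouched gadget cannot help $v$ before $v$ itself is active. The paper's Lemma~\ref{lemma2} uses a path $P_v$ of order $\tau(v)$ completely joined to $v$ (attached only when $\tau(v)>0$; you should add the same proviso, since ``$\tau(v)$ further vertices'' makes no sense for $\tau(v)\le 0$), whereas your gadget is a book of triangles over the edge $ve_v$; the two are functionally identical. Where you genuinely differ is in proving the hard inequality ${\rm dyn}(G,\tau)\le{\rm pi}(G',\tau')$. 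The paper argues directly on partial incentives of $G'$: any partial incentive can be normalized so that it vanishes on $V(G)$ and puts at most one unit into each gadget, after which $D=\big\{u:\sigma\big(V(P_u)\big)=1\big\}$ is read off as a dynamic monopoly of $(G,\tau)$. You instead factor the argument into two statements: the general inequality ${\rm dyn}(H,\tau)\le{\rm pi}(H,\tau)$ (the support of a partial incentive is a target set), which the paper never states explicitly and which your induction along the activation order proves correctly, and the equality ${\rm dyn}(G',\tau')={\rm dyn}(G,\tau)$, obtained by replacing gadget seeds by their host vertex --- the same kind of replacement the paper uses inside Lemma~\ref{lemma1}. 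Your factorization is slightly longer but more modular, since the support inequality is a reusable general fact; the paper's normalization is more compact because it exploits the gadget structure only once. One small point to tighten: for the reduction to run in polynomial time you must bound the gadget sizes, e.g.\ by first truncating each threshold to $d_G(v)+1$, or by noting that the hard planar instances produced in the proof of Theorem~\ref{thm:npdynplanar} have thresholds below $n^2$; the paper makes the analogous assumption $\tau(u)<n$ explicit in its proof of Theorem~\ref{thm:hardpitw}.
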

Our remaining contributions are algorithmic. 
We first show two tractable cases
for partial incentives.

\begin{theorem} \label{thm:pitw}
Let $w$ be a non-negative integer. 

For a given pair $(G,\tau)$,
where $G$ is a graph of order $n$ and treewidth at most $w$, 
and $\tau$ is a threshold function for $G$, 
an optimal partial incentive of $(G,\tau)$ 
can be computed in time $n^{O(w)}$.
\end{theorem}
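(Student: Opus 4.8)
The plan is to reduce the computation of an optimal partial incentive to an ordering problem and then solve the latter by dynamic programming over a tree decomposition. First I would exploit the round structure of the hull process: one has $H_{(G,\tau-\sigma)}(\emptyset)=V(G)$ if and only if there is a linear order $v_1,\dots,v_n$ of $V(G)$ such that $|N_G(v_i)\cap\{v_1,\dots,v_{i-1}\}|\geq \tau(v_i)-\sigma(v_i)$ for every $i\in[n]$; the forward direction orders vertices by the round in which they are added, the backward direction activates $v_1,\dots,v_n$ in turn. For a fixed order, the cheapest compatible $\sigma$ is $\sigma(v_i)=\max\{0,\tau(v_i)-b_i\}$, where $b_i=|N_G(v_i)\cap\{v_1,\dots,v_{i-1}\}|$ is the back-degree of $v_i$. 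Minimizing over all orders yields
\[
{\rm pi}(G,\tau)=\min_{\pi}\sum_{v\in V(G)}\max\{0,\tau(v)-b_\pi(v)\},
\]
where $b_\pi(v)$ denotes the number of neighbors of $v$ preceding it in the order $\pi$. Since $b_\pi(v)$ is exactly the in-degree of $v$ in the acyclic orientation induced by $\pi$, and every acyclic orientation arises from some order, the task becomes: find an acyclic orientation of $G$ minimizing $\sum_v\max\{0,\tau(v)-d^-(v)\}$, where $d^-$ denotes in-degree.

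Next I would compute a tree decomposition of $G$ of width at most $w$ (in time $f(w)\cdot n$, or more naively $n^{O(w)}$) and convert it into a nice tree decomposition with introduce, forget, and join nodes. The dynamic program processes the decomposition bottom-up. For a node $t$ with bag $X_t$, and with $V_t$ the set of vertices appearing in the subtree rooted at $t$, a state records a linear order $\prec$ of $X_t$ together with, for each $v\in X_t$, an integer $c(v)\in\{0,\dots,\min\{\tau(v),d_G(v)\}\}$ counting the neighbors of $v$ in $V_t\setminus X_t$ that precede $v$; the table entry stores the minimum total cost already accrued by the vertices of $V_t\setminus X_t$. At an introduce node the new vertex is inserted into $\prec$ at an arbitrary position with initial count $0$, since all its neighbors in $V_t$ lie in the current bag. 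At a forget node the leaving vertex $v$ is finalized: its final in-degree equals $c(v)$ plus the number of its bag-neighbors preceding it under $\prec$, the cost $\max\{0,\tau(v)-d^-(v)\}$ is added, and $v$ is recorded in the counts of its remaining neighbors that it precedes. At a join node two children sharing the same $\prec$ are merged by adding the corresponding counts and costs, which is correct because the two subtrees meet only in $X_t$, so no forgotten vertex is counted twice.

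The \emph{main obstacle} is correctness, specifically ensuring that the locally maintained bag orders glue into a single acyclic orientation of $G$ and that each in-degree is accounted for exactly once. Here I would argue by induction that the entry for $(\prec,c)$ at $t$ equals the minimum cost over all acyclic orientations of $G[V_t]$ whose restriction to $X_t$ realizes $\prec$ and in which every forgotten vertex already has its final in-degree; the crucial point is that, by the tree-decomposition properties, every edge of $G$ lies inside some bag, so its orientation is fixed by the order there, and the orders on overlapping bags are forced to agree, whence the union is acyclic. Finally I would bound the running time: a bag has at most $w+1$ vertices, so there are at most $(w+1)!\le n^{w+1}$ orders and at most $\prod_{v\in X_t}(\min\{\tau(v),d_G(v)\}+1)\le n^{w+1}$ count-vectors, giving $n^{O(w)}$ states per node; each introduce, forget, and join transition is handled in $n^{O(w)}$ time, and a nice tree decomposition has $O(wn)$ nodes, so the whole algorithm runs in time $n^{O(w)}$. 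Reading off an optimal order from the completed table then produces an optimal partial incentive via $\sigma(v)=\max\{0,\tau(v)-b_\pi(v)\}$.
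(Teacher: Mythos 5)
Your proof is correct and, at its core, it is the same algorithm as the paper's: bottom-up dynamic programming over a nice tree decomposition whose states record a linear order on the bag plus one integer counter per bag vertex, giving $n^{O(w)}$ states. The difference lies in what is carried along. The paper keeps the incentive explicit: its states are triples $(\sigma_t,\prec_t,\rho_t)$, where $\sigma_t$ enumerates candidate incentive values on the bag and $\rho_t(u)$ is a guessed amount of help that $u$ will receive from vertices \emph{outside} $G_t$ --- a promise about the unprocessed part of the graph, verified further up the tree. Your preliminary observation that for a fixed activation order the cheapest incentive is pointwise $\max\{0,\tau(v)-b_\pi(v)\}$, so that ${\rm pi}(G,\tau)$ is a minimum deficiency sum over acyclic orientations, lets you drop both $\sigma_t$ and $\rho_t$ and instead count already-forgotten predecessors --- a record of the past rather than a promise about the future. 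Your version is arguably cleaner (it also renders the paper's threshold-reduction preprocessing unnecessary, since capping counters at $\min\{\tau(v),d_G(v)\}$ bounds the state space even for huge thresholds), while the paper's forward-looking ``local cascades'' are what get reused nearly verbatim in its interval-graph algorithm (Theorem \ref{thm:piinterval}). One step you should write out more carefully: at a join node, acyclicity of the merged order does not follow merely because ``orders on overlapping bags agree'' --- pairwise agreement of bag orders does not imply acyclicity in general; the correct argument is that a directed cycle in the union of the two children's orders would have to cross the shared bag at least twice, producing bag vertices $x_1\prec x_2\prec\cdots\prec x_k\prec x_1$ in the common bag order, a contradiction.
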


\begin{theorem}\label{thm:piinterval}
Let $t$ be a non-negative integer. 

For a given pair $(G,\tau)$,
where $G$ is an interval graph of order $n$, 
and $\tau$ is a threshold function for $G$
with $\tau(u)\leq t$ for every vertex $u$ of $G$, 
an optimal partial incentive of $(G,\tau)$ 
can be determined in time $n^{O(t^2)}$.
\end{theorem}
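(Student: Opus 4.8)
The plan is to reduce the computation of an optimal partial incentive to a minimum-cost vertex ordering problem, and then to solve the latter by dynamic programming along an interval representation of $G$. First I would reformulate the objective. A function $\sigma\in\mathbb{N}_0^{V(G)}$ satisfies $H_{(G,\tau-\sigma)}(\emptyset)=V(G)$ if and only if there is a linear order $v_1,\dots,v_n$ of $V(G)$ with $|N_G(v_i)\cap\{v_1,\dots,v_{i-1}\}|\ge \tau(v_i)-\sigma(v_i)$ for every $i$; writing $d^<_\pi(v_i)=|N_G(v_i)\cap\{v_1,\dots,v_{i-1}\}|$ for the number of earlier neighbors under the ordering $\pi$, the cheapest $\sigma$ compatible with a fixed order sets $\sigma(v_i)=\max\{0,\tau(v_i)-d^<_\pi(v_i)\}$. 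Hence
\[
{\rm pi}(G,\tau)=\min_{\pi}\sum_{v\in V(G)}\max\{0,\tau(v)-d^<_\pi(v)\}
=\sum_{v\in V(G)}\tau(v)-\max_{\pi}\sum_{v\in V(G)}\min\{\tau(v),d^<_\pi(v)\},
\]
the extrema ranging over all vertex orderings $\pi$. Thus it suffices to find an ordering maximizing the total number of \emph{useful} earlier neighbors, where each vertex $v$ can use at most $\tau(v)\le t$ of them; from an optimal ordering the incentive $\sigma$ is read off directly.

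Next I would fix an interval representation with $2n$ distinct endpoints and sweep a scanline from left to right; at every moment the set of intervals crossing the scanline is a clique, and the sequence of maximal cliques yields a clique path $C_1,\dots,C_m$ along which the dynamic program proceeds. The crucial point is that because every threshold is at most $t$, only a bounded amount of information about the partially decided ordering is relevant at each cut between consecutive cliques. Concretely, for the intervals crossing a cut I would record (i) which of them are not yet \emph{satisfied}, that is, still positioned to receive fewer than their threshold of earlier neighbors, together with the residual demand in $[t]$ of each such interval, and (ii) the relative-position data needed to decide how many further earlier neighbors each can still collect from intervals starting to the right of the cut. Since a vertex needs at most $t$ credits and supplies credits only to its neighbors, at most $O(t)$ crossing intervals per cut carry nontrivial demand or supply, and encoding their identities and residual data costs $O(t^2)$ coordinates ranging over $[n]\cup[t]_0$. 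This yields $n^{O(t^2)}$ states, with the transition from one cut to the next computable in polynomial time by enumerating the local ordering choices among the $O(t)$ relevant intervals and updating the accumulated value of $\sum_v\min\{\tau(v),d^<_\pi(v)\}$.

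The main obstacle is establishing this locality rigorously, i.e. proving an exchange/normal-form lemma guaranteeing an optimal ordering in which, at each cut, all but $O(t)$ of the crossing intervals are already satisfied and may be ignored by the state, and in which any further credits an unsatisfied interval receives come from a bounded, identifiable set of neighbors near the cut. I expect to prove this by a local exchange argument exploiting that the crossing intervals form a clique: if the crossing vertices are listed in $\pi$-order, the $j$-th of them already has at least $j-1$ earlier neighbors within the clique, so every crossing vertex placed after the first $t$ is automatically satisfied; hence at most $t$ crossing intervals can be unsatisfied by intra-clique credits, and a reordering confined to the clique can be used to drive the remaining interaction with future intervals into a bounded window without decreasing the objective. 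Once this structural lemma is in place, defining the state space, verifying that the recurrence correctly accumulates the objective, and bounding the running time by $n^{O(t^2)}$ are routine, and the optimal $\sigma$ is recovered by backtracking through the dynamic-programming table.
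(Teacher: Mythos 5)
Your reformulation of ${\rm pi}(G,\tau)$ as $\min_\pi\sum_v\max\{0,\tau(v)-d^<_\pi(v)\}$ over vertex orderings is correct, and so is your clique observation that, at any cut, at most $t$ of the crossing intervals can fail to be satisfied by intra-clique credits. But the proof has a genuine gap exactly where you flag the ``main obstacle'': the locality lemma is not proven, and it is the entire content of the theorem, not a technicality. Two concrete problems. First, satisfied crossing intervals cannot simply be ``ignored by the state'': they matter as suppliers, since a future vertex $w$ gains a credit from a crossing neighbor only if that neighbor precedes $w$ in $\pi$; the crossing intervals at a wide cut can have up to $n$ distinct right endpoints and hence up to $n$ distinct neighborhoods among future vertices, so without a normal-form lemma the interface information is unbounded. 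Your proposed fix (reorder within the clique to push the remaining interaction ``into a bounded window'') is an exchange performed at one cut; but every interval crosses many cuts, every cut needs its own normalization, and these exchanges interact --- you need a single global ordering that is simultaneously in normal form at every cut, and nothing in the sketch establishes that such an ordering exists, nor even states precisely what the normal form is. Second, your transition step, ``enumerating the local ordering choices among the $O(t)$ relevant intervals,'' presupposes exactly this normal form; without it neither the state space nor the recurrence is well defined, so the part you call routine cannot even be started.

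It is worth contrasting this with the paper, because the paper's route avoids the difficulty rather than solving it. The paper runs the dynamic program only over the cuts $C_i$ with $c_i<\min\{c_{i-1},c_{i+1},t\}$, i.e. the narrow local minima of the clique-size profile; these have fewer than $t$ vertices, so a full interface description (a linear order on the cut, the incentive values on it, and a guessed amount of outside ``help'' for each cut vertex) costs only $n^{O(t)}$ states --- no vertex of a wide cut ever has to be forgotten or normalized away. For the wide regions between consecutive narrow cuts it invokes a structural lemma (from Bessy et al.) that each such region induces either a clique of order at most $t$ or a $t$-connected chordal graph, together with a lemma that a $t$-connected chordal graph with thresholds at most $t$ admits a partial incentive of weight at most $\binom{t+1}{2}$ supported on a $t$-clique. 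Consequently an optimal incentive restricted to each region has weight at most $\binom{t+1}{2}$, so all possible restrictions can be enumerated by brute force in $n^{O(t^2)}$ time; this bounded-cost lemma is where the exponent $O(t^2)$ actually comes from, and it has no counterpart in your plan. If you want to salvage your approach, proving the global exchange/normal-form lemma is the real task (and, if true, it might even yield a better exponent); alternatively you can import the $t$-connectivity and $\binom{t+1}{2}$ arguments, at which point you would essentially be reproducing the paper's proof.
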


Complementing our hardness result for dynamic monopolies in planar graphs, we contribute approximation algorithms.
We give a PTAS for degenerate sets in planar graph using Baker's layering technique~\cite{ba}.

\begin{theorem}\label{thm:ptaskappa}
Let $\epsilon$ be a positive real number.

For a given pair $(G,\kappa)$, 
where $G$ is a planar graph and $\kappa\in\mathbb{Z}^{V(G)}$,
a $\kappa$-degenerate set $I$ in $G$
with $|I|\geq (1-\epsilon) \alpha(G,\kappa)$ 
can be determined in polynomial time.
\end{theorem}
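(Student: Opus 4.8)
The plan is to apply Baker's layering technique to obtain a PTAS for the maximum $\kappa$-degenerate set problem on planar graphs. The key insight is that degeneracy is a local, nested property: if $I$ is a $\kappa$-degenerate set with witnessing order $u_1,\ldots,u_k$, then restricting to any subset $I'\subseteq I$ and inheriting the induced order still yields a $\kappa$-degenerate set, since deleting vertices only decreases the back-neighbor count of each remaining vertex. This monotonicity means that $\kappa$-degeneracy behaves well under decomposition into subgraphs.

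\begin{proof}[Proof sketch]
First I would fix a planar embedding of $G$ and a root vertex, and define the BFS layers $L_0, L_1, L_2, \ldots$ according to distance from the root. For an integer parameter $p$ to be chosen as roughly $1/\epsilon$ and a shift $s \in \{0,1,\ldots,p-1\}$, I would delete all layers $L_i$ with $i \equiv s \pmod p$. This partitions the remaining graph into a disjoint union of connected components, each of which lives within at most $p$ consecutive BFS layers and is therefore $p$-outerplanar, hence has treewidth $O(p) = O(1/\epsilon)$.

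The second step is to solve the problem exactly on each piece. For each such low-treewidth component $H$, I would compute a maximum $\kappa$-degenerate set in $H$ using dynamic programming over a tree decomposition of width $O(1/\epsilon)$; by the monotonicity observation above, combining the optimal degenerate sets across the disjoint components yields a valid $\kappa$-degenerate set in the whole deleted graph, and hence in $G$. Taking the best result over all $p$ choices of the shift $s$ gives the final output.

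The main obstacle, and the crux of the argument, is the two-part analysis: the algorithmic part of designing the treewidth dynamic program for $\kappa$-degenerate sets, and the approximation-ratio part. For the ratio, I would fix an optimal $\kappa$-degenerate set $I^*$ and argue by averaging: since the deleted layer classes are disjoint over $s \in \{0,\ldots,p-1\}$, some shift $s$ removes at most $|I^*|/p$ of the vertices of $I^*$. Restricting $I^*$ to the surviving layers yields, by monotonicity, a valid $\kappa$-degenerate set of size at least $(1-1/p)|I^*| = (1-1/p)\alpha(G,\kappa)$ that is entirely contained in the deleted graph; since our algorithm computes the exact optimum on that graph, its output has size at least $(1-1/p)\alpha(G,\kappa) \geq (1-\epsilon)\alpha(G,\kappa)$. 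The dynamic-programming component is where I expect the real work to lie: the DP state at a bag must encode, for each vertex in the bag, whether it belongs to the partial degenerate set and how many of its neighbors precede it in the degeneracy order, which requires tracking a consistent partial linear order and back-degree counts across the decomposition. Verifying that this state space has size bounded by a function of the treewidth only (so the DP runs in polynomial time for fixed $\epsilon$) is the technical heart of the proof.
\end{proof}
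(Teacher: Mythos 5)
Your outline matches the paper's proof: Baker layering (the paper peels outerplanarity layers, you use BFS layers; both yield pieces of treewidth $O(1/\epsilon)$), exact solutions on the pieces, union across the pairwise non-adjacent components, and the shift-averaging bound, which rests on exactly the heredity property of $\kappa$-degenerate sets you state at the outset (the paper uses it implicitly when asserting that $I_{\max}\cap V(G(i,j))$ is degenerate in $G(i,j)$). The one substantive divergence is the step you call the technical heart: the paper never designs a dynamic program. It invokes the duality stated in its introduction --- $I$ is $\kappa$-degenerate in $H$ if and only if $V(H)\setminus I$ is a dynamic monopoly of $(H,\tau)$ with $\tau(u)=d_H(u)-\kappa(u)$ --- and then cites the $n^{O(w)}$ algorithm of Ben-Zwi et al.~\cite{behelone} for minimum dynamic monopoly on graphs of treewidth $w$; this closes the proof with no further work. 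If you do build the DP yourself, note that your stated target, a state space ``bounded by a function of the treewidth only,'' is too strong: through the same duality this would give a fixed-parameter algorithm for target set selection parameterized by treewidth, which the hardness results of \cite{behelone} (the ones behind Theorem~\ref{thm:hardpitw}) rule out under standard assumptions. What is achievable, and all your argument actually needs, is a state space of size $n^{O(w)}$ per bag --- the back-degree counters take values in $[n-1]_0$, and one must also guess the order in which bag vertices are activated --- which still gives polynomial time for each fixed $\epsilon$.
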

The main techniques for constructing PTASs on planar graphs such as Baker's layering technique~\cite{ba} and the bidimensionality theory~\cite{deha} seem not to work for finding a minimum dynamic monopoly, the dual problem of a maximum degenerate set.
On the positive side, we show the following.

\begin{theorem}\label{thm:apptw}
For a given pair $(G,\tau)$, where $G$ is a graph of order $n$ and size $m$, and $\tau$ is a threshold function for $G$, and for a given tree-decomposition of $G$ of width $w$ and size $O(n)$, a dynamic monopoly $D$ of $(G,\tau)$ with $|D|\leq (w+1){\rm dyn}(G,\tau)$ can be determined in time $O(n^2m)$.
\end{theorem}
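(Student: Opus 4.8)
The starting point is that the hull operator $H_{(G,\tau)}(\cdot)$ is monotone, so every superset of a dynamic monopoly is again a dynamic monopoly. Consequently, to obtain a dynamic monopoly $D$ with $|D|\le (w+1){\rm dyn}(G,\tau)$ it suffices to find at most ${\rm dyn}(G,\tau)$ bags of the given tree-decomposition whose union is a dynamic monopoly: such a union has at most $(w+1){\rm dyn}(G,\tau)$ vertices, and it is a dynamic monopoly by monotonicity. The \emph{existence} of so few bags is immediate: fixing an optimal dynamic monopoly $D^\ast$ with $|D^\ast|={\rm dyn}(G,\tau)$ and choosing, for each $v\in D^\ast$, one bag containing $v$, the union of these at most $|D^\ast|$ bags contains $D^\ast$ and hence is a dynamic monopoly. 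The whole difficulty is therefore algorithmic: we must produce such a family of bags \emph{without} knowing $D^\ast$.

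The plan is to build the family greedily. I would root the tree-decomposition, writing $B_t$ for the bag at a node $t$ and $V_t$ for the union of all bags in the subtree rooted at $t$, and recall that $B_t$ separates $V_t\setminus B_t$ from the rest of $G$. Maintaining a current seed set $D$ (a union of bags) together with its hull $A=H_{(G,\tau)}(D)$, while $A\neq V(G)$ I would consider the frozen set $F=V(G)\setminus A$, pick a node $t$ whose bag meets $F$, add all of $B_t$ to $D$, and recompute $A$. A convenient choice is a \emph{deepest} node $t$ with $B_t\cap F\neq\emptyset$: then no proper descendant of $t$ meets $F$, so $V_t\cap F\subseteq B_t$, and after adding $B_t$ the whole region $V_t$ becomes active and remains so. Each iteration activates at least the newly seeded frozen vertices, so there are at most $n$ iterations; computing a hull from scratch costs $O(nm)$, which yields the claimed running time $O(n^2m)$, the size-$O(n)$ bound on the decomposition ensuring that a deepest frozen bag is located cheaply.

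The heart of the argument, and the step I expect to be the main obstacle, is to bound the number of seeded bags by ${\rm dyn}(G,\tau)$, equivalently to charge the seeded bags injectively to distinct vertices of $D^\ast$. The separator property is the natural tool: if $B_t$ is a deepest frozen bag and $s\in B_t\cap F$ is the first vertex of $B_t\cap F$ activated by $D^\ast$, then either $s\in D^\ast$, or $s$ is activated through a neighbour lying outside $V_t$; since $B_t$ separates $V_t\setminus B_t$, and the regions $V_t$ handled in successive iterations are nested or disjoint (a later frozen bag cannot lie inside an already-activated subtree), one would like to conclude that each iteration can be blamed on a fresh optimal seed. The delicate case is precisely when $D^\ast$ activates the separator \emph{from outside} without placing any seed inside $V_t$, so that a one-bag-per-iteration charge fails; controlling this case — and, relatedly, ruling out that redundant bags of an adversarial decomposition force wasteful seeding (a single high-threshold centre joined to many pendant leaves already shows that a careless bag-selection rule can over-seed) — is where the real work lies. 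I expect this to require either a more refined selection rule than ``deepest'', a global amortized charging in which several seeded bags are attributed to one optimal seed with multiplicity at most $w+1$, or a preliminary passage to a non-redundant (e.g.\ nice) tree-decomposition of size $O(n)$ on which the separator-based charging becomes clean.
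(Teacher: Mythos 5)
Your high-level framework coincides with the paper's (seed whole bags while processing the rooted tree-decomposition, charge seeded bags to distinct vertices of an optimal monopoly, and pay $O(nm)$ per hull computation over $O(n)$ iterations), but the step you yourself flag as ``where the real work lies'' is exactly the content of the paper's proof, and the concrete rule you propose --- seed a \emph{deepest} bag meeting the frozen set --- is not merely unproven but false. Take the spider with centre $c$ and legs $c\,a_i\,b_i$ for $i\in[k]$, with $\tau(c)=k$ and $\tau(a_i)=\tau(b_i)=1$, and the width-$1$ tree-decomposition rooted at $\{c,a_1\}$ in which each bag $\{c,a_i\}$ has the child $\{a_i,b_i\}$. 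Here ${\rm dyn}(G,\tau)=1$ (seed $c$) and $w+1=2$, yet your greedy always finds a deepest frozen bag of the form $\{a_i,b_i\}$; seeding it activates only $a_i$ and $b_i$, since $c$ needs all $k$ neighbours active, so the algorithm ends up seeding all $k$ leg bags, $2k$ vertices in total --- an unbounded approximation ratio. This is precisely the ``high-threshold centre'' obstruction you anticipated, and no charging argument can rescue this selection rule.

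The missing idea, which the paper adapts from Aazami and Stilp, is to replace the test ``does the subtree region contain frozen vertices?'' by the test ``is the subtree region \emph{strong}?'': processing nodes bottom-up with current seed set $A_{i-1}$, the bag $X_i$ is seeded if and only if $V(G_i)\not\subseteq H_{(G,\tau)}\big(A_{i-1}\cup N^+(V(G_i))\big)$, i.e.\ the region cannot be fully activated even when all of its outside neighbours are granted as seeds for free. This test disposes of the example above (every leg $\{a_i,b_i\}$ is weak, because seeding its outside neighbour $c$ activates it, so no leg is seeded; the first node whose region is strong has $c$ in its bag, and seeding that one bag finishes the cascade), and it is what makes the charging work: by Lemma~\ref{lem:app}~(i), a strong region must contain a vertex of \emph{every} dynamic monopoly outside the current seed set, and by Lemma~\ref{lem:app}~(iii) the disjointified regions $B_j=V(G_{i_j})\setminus\bigcup_{\ell<j}V(G_{i_\ell})$ associated with the $b$ seeded bags are still strong, since the previously handled regions lie in the hull and have seeded boundaries. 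The $B_j$ being pairwise disjoint, every dynamic monopoly has at least $b$ vertices, so ${\rm dyn}(G,\tau)\geq b$ and $|A_k|\leq(w+1)b\leq(w+1){\rm dyn}(G,\tau)$. Weakness of a region is also what guarantees correctness of not seeding it (Lemma~\ref{lem:app}~(ii)). So your instinct that a ``more refined selection rule'' is required was right; the strong/weak test is that rule, and without it, or an equivalent device, the theorem's bound is not achieved.
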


\begin{corollary}\label{cor:appplanar}
For a given pair $(G,\tau)$, where $G$ is a planar graph of order $n$, and $\tau$ is a threshold function for $G$, a dynamic monopoly $D$ of $(G,\tau)$ with $|D|\leq O(\sqrt{n}){\rm dyn}(G,\tau)$ can be determined in time $O(n^3)$.
\end{corollary}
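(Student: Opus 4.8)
The plan is to combine Theorem~\ref{thm:apptw} with two standard structural facts about planar graphs: their linear edge count and their sublinear treewidth. First I would invoke Euler's formula, which gives $m\leq 3n-6$, so that $m=O(n)$ for the planar graph $G$. Next I would recall that every planar graph on $n$ vertices has treewidth $O(\sqrt{n})$, a consequence of the planar separator theorem of Lipton and Tarjan, and that a tree-decomposition of $G$ witnessing width $w=O(\sqrt{n})$ can be computed in polynomial time by recursively applying the separator theorem. I would also arrange that this decomposition has size $O(n)$; this can always be ensured by discarding bags that are contained in a neighbouring bag and contracting the corresponding tree edges, leaving at most $n$ bags. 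Thus in polynomial time one obtains a tree-decomposition of $G$ of width $w=O(\sqrt{n})$ and size $O(n)$, exactly as required by the hypothesis of Theorem~\ref{thm:apptw}.

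With this decomposition in hand, I would apply Theorem~\ref{thm:apptw} directly. It produces a dynamic monopoly $D$ of $(G,\tau)$ with
\[
|D|\leq (w+1)\,{\rm dyn}(G,\tau)=O(\sqrt{n})\,{\rm dyn}(G,\tau),
\]
which is the claimed approximation guarantee. For the running time, Theorem~\ref{thm:apptw} runs in time $O(n^2m)$; substituting the planar bound $m=O(n)$ yields $O(n^3)$. It remains only to check that the polynomial cost of constructing the tree-decomposition does not dominate, which holds since the separator-based construction runs well within the $O(n^3)$ budget.

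The only point requiring genuine care is the tree-decomposition step: one must verify both that its width is truly $O(\sqrt{n})$ and that it is produced fast enough, and one must cite or sketch the recursive separator construction that achieves this. Neither presents a serious obstacle, since these are classical results; the corollary is essentially an instantiation of Theorem~\ref{thm:apptw} on the class of planar graphs, where the treewidth bound converts the $(w+1)$ factor into the advertised $O(\sqrt{n})$ factor and the edge bound converts the $O(n^2m)$ running time into $O(n^3)$.
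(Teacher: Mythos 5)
Your proof is correct and follows essentially the same route as the paper: both instantiate Theorem~\ref{thm:apptw} with a tree-decomposition of width $O(\sqrt{n})$, which exists for planar graphs, and use the planar edge bound $m=O(n)$ to turn the $O(n^2m)$ running time into $O(n^3)$. The only difference is that the paper obtains the decomposition by citing the algorithm of Alber et al.~\cite{albofeklni}, which runs in time $O(n^{3/2})$, whereas you sketch the classical recursive Lipton--Tarjan separator construction; both stay well within the time budget, so this is a matter of citation rather than of approach.
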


All proofs are given in the next section.

\section{Proofs}

Our hardness results follow from known results using two simple constructions described in Lemma~\ref{lemma1} and Lemma \ref{lemma2} below.

\begin{lemma}\label{lemma1}
Let $G$ be a graph of order $n$. 
Let $G'$ arise from $G$ by adding, for every edge $uv$ of $G$,
an independent set $I(uv)$ of order $n$ as well as all $2n$ possible edges between $\{ u,v\}$ and $I(uv)$.
Let 
$$\tau':V(G')\to\mathbb{Z}:
u\mapsto
\begin{cases}
d_G(u)n &\mbox{, if $u\in V(G)$, and}\\
1 &\mbox{, otherwise.}
\end{cases}
$$
\begin{enumerate}[(i)]
\item The minimum order of a vertex cover of $G$ equals ${\rm dyn}(G',\tau')$.
\item If $G$ is planar, then $G'$ is planar.
\end{enumerate}
\end{lemma}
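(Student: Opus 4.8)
The plan is to prove the two parts of Lemma~\ref{lemma1} separately, with the bulk of the work going into part (i), the equality between the minimum vertex cover of $G$ and ${\rm dyn}(G',\tau')$.

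For part (i), I would argue both inequalities by exhibiting explicit dynamic monopolies and vertex covers. The guiding intuition is that the threshold $\tau'(u)=d_G(u)n$ on an original vertex $u\in V(G)$ is enormous: to activate $u$ from its neighbors, one essentially needs all of its neighbors in $G'$ to be active, since $u$ has exactly $d_G(u)n$ neighbors lying in the pendant sets $I(uv)$ plus its $d_G(u)$ neighbors in $G$. Meanwhile each vertex of a set $I(uv)$ has threshold $1$, so it becomes active the instant one of its two neighbors $u,v$ is active. To show ${\rm dyn}(G',\tau')\le |C|$ for a minimum vertex cover $C$ of $G$, I would take $D=C$ and verify it is a dynamic monopoly of $(G',\tau')$: first every set $I(uv)$ activates, because $C$ is a vertex cover so at least one of $u,v$ lies in $C$ and hence activates all of $I(uv)$; once all pendant sets are active, each original vertex $u\notin C$ sees all $d_G(u)n$ of its pendant neighbors active, meeting its threshold $d_G(u)n$, so every remaining original vertex activates. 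This shows $D=C$ is a target set, so ${\rm dyn}(G',\tau')\le$ the minimum vertex cover size.

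For the reverse inequality, I would take a minimum dynamic monopoly $D$ of $(G',\tau')$ and build from it a vertex cover of $G$ of size at most $|D|$. The key structural observation is that no vertex of any $I(uv)$ ever needs to be in $D$: such a vertex has threshold $1$ and will activate for free as soon as $u$ or $v$ does, so we may assume (after a replacement argument) that $D\subseteq V(G)$ — if $D$ contains a pendant vertex from $I(uv)$, we can swap it for $u$ (or $v$) without increasing $|D|$ and without losing the target-set property. Then I would argue $D$, viewed as a subset of $V(G)$, must be a vertex cover of $G$: suppose some edge $uv$ had neither endpoint in $D$. The set $I(uv)$ can only be activated through $u$ or $v$, and $u$ (resp. $v$) can only be activated once all $d_G(u)n$ of its pendant neighbors are active, creating a circular dependency — $u$ waits on $I(uv)$ while $I(uv)$ waits on $u$ or $v$ — so some vertex in this cluster never activates, contradicting that $D$ is a target set. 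Hence $D$ covers every edge, giving a vertex cover of size $|D|={\rm dyn}(G',\tau')$, and the two inequalities together yield equality.

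Part (ii) is a routine planarity check. Adding the independent set $I(uv)$ with all edges to $\{u,v\}$ amounts to subdividing-and-fattening the edge $uv$: I would fix a planar embedding of $G$ and observe that each $I(uv)$ can be drawn inside a thin region along the edge $uv$, with its $n$ vertices placed in a row between $u$ and $v$ and joined to both endpoints without crossings, since the bipartite graph on $\{u,v\}$ and $I(uv)$ is planar and can be nested in the face-corridor near $uv$. I expect the main obstacle to be the reverse inequality in part (i): making the swap argument rigorous and cleanly ruling out the possibility that a dynamic monopoly activates an original vertex through some partial combination of its pendant and graph neighbors. The threshold $d_G(u)n$ is chosen precisely large enough to forbid this, and the careful bookkeeping of which neighbors can contribute to meeting a threshold is where the argument must be watertight.
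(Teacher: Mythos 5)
Your proposal follows essentially the same route as the paper's proof: the vertex-cover-to-monopoly direction, the replacement argument reducing to $D\subseteq V(G)$, and the uncovered-edge contradiction are all the paper's steps, and your corridor embedding of each $I(uv)$ handles (ii), which the paper dismisses as obvious. The one imprecision is your claim that $u$ can only activate after \emph{all} $d_G(u)n$ of its pendant neighbors are active --- literally false, since active graph-neighbors can substitute for up to $d_G(u)$ pendant ones --- but the circular dependency you want is exactly what the paper's count $|N_{G'}(u)\setminus(\{v\}\cup I(uv))|\le (d_G(u)-1)(n+1)<d_G(u)n$ delivers, i.e., the bookkeeping you yourself flag as the crux.
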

\begin{proof} (i)
First, let $X$ be a vertex cover of $G$.
Let $H$ be the hull of $X$ in $(G',\tau')$.
Since every vertex in $V(G')\setminus V(G)$ has a neighbor in $X$,
and threshold value $1$, the set $H$ contains $V(G')\setminus V(G)$.
Therefore, for every vertex $u$ of $G'$ in $V(G)\setminus X$,
the set $H$ contains all $d_G(u)n$ neighbors of $u$ in $V(G')\setminus V(G)$,
which implies that $X$ is a dynamic monopoly of $(G',\tau')$.

Next, let $D$ be a dynamic monopoly of $(G',\tau')$.
Since replacing a vertex in $D\setminus V(G)$ by some neighbor in $V(G)$ 
yields a dynamic monopoly, we may assume that $D\subseteq V(G)$.
Suppose, for a contradiction, that $u,v\not\in D$ for some edge $uv$ in $G$.
Since $|N_{G'}(u)\setminus (\{ v\}\cup I(uv))|\leq (d_G(u)-1)(n+1)<d_G(u)n$
and $|N_{G'}(v)\setminus (\{ u\}\cup I(uv))|\leq (d_G(v)-1)(n+1)<d_G(v)n$,
we obtain a contradiction to the choice of $D$.
Hence, $D$ is a vertex cover of $G$.

This completes the proof of (i).\\[3mm]
(ii) This is obvious.
\end{proof}
Our first hardness result states the NP-completeness of dynamic monopolies for planar graphs,
which quite surprisingly seems to be unknown.

\begin{proof}[Proof of Theorem \ref{thm:npdynplanar}] The stated problem is clearly in NP,
and NP-completeness follows from the NP-completeness of {\sc Vertex Cover} for planar graphs \cite{gakost}
using Lemma \ref{lemma1}.
\end{proof}
For our remaining hardness results, 
we use the following lemma
relating dynamic monopolies with partial incentives.

\begin{lemma}\label{lemma2}
Let $G$ be a graph of order $n$, and let $\tau$ be a threshold function for $G$. 
Let $G'$ arise from $G$ by adding, for every vertex $u$ of $G$ with $\tau(u)>0$,
a path $P_u$ of order $\tau(u)$ as well as all $\tau(u)$ possible edges between $u$ and the vertices of $P_u$.
Let 
$$\tau':V(G')\to\mathbb{Z}:
u\mapsto
\begin{cases}
\tau(u)&\mbox{, if $u\in V(G)$, and}\\
1 &\mbox{, otherwise.}
\end{cases}
$$
\begin{enumerate}[(i)]
\item ${\rm dyn}(G,\tau)={\rm pi}(G',\tau')$.
\item If $G$ has treewidth $w$, then $G'$ has treewidth at most $\max\{ w,2\}$.
\item If $G$ is planar, then $G'$ is planar.
\item If $G$ is chordal, then $G'$ is chordal.
\end{enumerate}
\end{lemma}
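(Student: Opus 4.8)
The plan is to prove (i) by establishing both inequalities through a direct simulation between the two diffusion processes, and to prove (ii)--(iv) by exhibiting explicit decompositions, embeddings, and orderings. The structural observation driving everything is that, for each $u\in V(G)$ with $\tau(u)>0$, the added set $\{u\}\cup V(P_u)$ induces a fan: $u$ is adjacent to every vertex of $P_u$, and each path vertex carries threshold $1$ in $(G',\tau')$. The key fact I would isolate first is a monotonicity lemma for the process on $(G',\tau'-\sigma)$: if $\sigma$ places no weight on $V(P_v)$, then no vertex of $P_v$ can become active before $v$ itself. This holds because the first vertex of $P_v$ to activate would, at that moment, have $v$ as its only possible active neighbour (threshold $1$, no reduction, no other path vertex active yet). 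Hence $P_v$ can never help to activate its own anchor $v$, so inside $V(G)$ the process behaves exactly like the $(G,\tau)$-process.

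For the inequality ${\rm pi}(G',\tau')\le{\rm dyn}(G,\tau)$, I would take a minimum dynamic monopoly $D$ of $(G,\tau)$, assuming without loss of generality that $D$ avoids threshold-$0$ vertices (which activate for free). For each $u\in D$ I put weight $1$ on one endpoint of $P_u$, giving a function $\sigma$ of weight $|D|$. Running the process from $\emptyset$, each seeded endpoint activates, the reduction cascades along $P_u$, and once all of $P_u$ is active its $\tau(u)$ path-neighbours activate $u$; thus every $u\in D$ becomes active. By the monotonicity lemma the remaining activations inside $V(G)$ mimic the $(G,\tau)$-process started from $D$, which fills $V(G)$, and the paths then follow, so $\sigma$ is a partial incentive.

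For the reverse inequality ${\rm dyn}(G,\tau)\le{\rm pi}(G',\tau')$ I would take an optimal partial incentive $\sigma$, set $w_v:=\sigma(v)+\sum_{p\in V(P_v)}\sigma(p)$, and define $D:=\{v\in V(G):w_v\ge1\}$. Since the sets $\{v\}\cup V(P_v)$ partition $V(G')$, we get $|D|\le\sum_v w_v=\sigma(V(G'))={\rm pi}(G',\tau')$. To see that $D$ is a dynamic monopoly I would fix a linear activation order of the (successful) process on $(G',\tau'-\sigma)$ and prove by induction along it that every $V(G)$-vertex it activates lies in $H_{(G,\tau)}(D)$: a vertex $v\in D$ is there by definition, while for $v\notin D$ we have $w_v=0$, so by the monotonicity lemma no path vertex is active when $v$ activates, forcing $v$ to have at least $\tau(v)$ active $G$-neighbours, all of which precede $v$ and hence lie in $H_{(G,\tau)}(D)$ by induction. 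This is the step I expect to be the main obstacle, since it combines the monotonicity lemma with the charging of weight to anchor vertices.

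Finally, (ii)--(iv) reduce to the fan structure. For (ii) I would extend a width-$w$ tree-decomposition of $G$ by attaching, at a bag containing $u$, a path of bags $\{u,p_i,p_{i+1}\}$ realising the fan; all new bags have size $3$, yielding width $\max\{w,2\}$. For (iii) I would embed each fan inside a face incident to $u$ in a fixed planar embedding of $G$, which is possible since the fan is planar and meets $G$ only at $u$. For (iv) I would eliminate the path vertices $p_1,\dots,p_k$ of each fan in order: each $p_i$ is simplicial because its surviving neighbours $\{u,p_{i+1}\}$ are adjacent, and removing all of them returns $G$, so prepending this sequence to a perfect elimination ordering of $G$ gives one for $G'$.
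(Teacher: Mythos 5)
Your proposal is correct and takes essentially the same route as the paper: the same seeding of one endvertex of each $P_u$ for $u\in D$ to show ${\rm pi}(G',\tau')\le {\rm dyn}(G,\tau)$, the same extraction of a dynamic monopoly $D$ from the vertices whose fan $\{v\}\cup V(P_v)$ carries positive $\sigma$-weight for the converse, and the same bags $\{u,v_{i-1},v_i\}$ for the treewidth bound. The only difference is one of detail: where the paper first normalizes $\sigma$ (assuming $\sigma(V(G))=0$ and $\sigma\big(V(P_u)\big)\le 1$) and then asserts that $D$ works, you keep $\sigma$ arbitrary and verify the same $D$ via your monotonicity lemma and an induction along the activation order, which is a fuller justification of the step the paper leaves implicit.
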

\begin{proof}
(i) First, let $D$ be a dynamic monopoly for $(G,\tau)$, 
where we may assume that $\tau(u)>0$ for every vertex $u$ in $D$.
For every vertex $u$ in $D$, let $p_u$ be an endvertex of $P_u$.
Since 
$$\sigma:V(G')\to\mathbb{N}_0:
v\mapsto
\begin{cases}
1&\mbox{, if $v=p_u$ for some $u\in D$, and}\\
0 &\mbox{, otherwise,}
\end{cases}
$$
is a partial incentive of $(G',\tau')$, and $\sigma(V(G'))=|D|$,
we obtain ${\rm dyn}(G,\tau)\geq {\rm pi}(G',\tau')$.

Next, let $\sigma$ be a partial incentive of $(G',\tau')$.
In view of the definition of $\tau'$, 
we may assume that $\sigma(V(G))=0$, and $\sigma\big(V(P_u)\big)\leq 1$ for every vertex $u$ in $V(G)$.
Since $D=\big\{u\in V(G): \sigma\big(V(P_u)\big)=1 \big\}$
is a dynamic monopoly for $(G,\tau)$, and $|D|=\sigma(V(G'))$, 
we obtain ${\rm dyn}(G,\tau)\leq {\rm pi}(G',\tau')$.\\[3mm]
(ii) Let $\left(T,(X_t)_{t\in V(T)}\right)$ be a tree-decomposition of $G$,
cf. the beginning of Subsection \ref{subsec2.1}
Let $u$ be a vertex of $G$ with $\tau(u)=k>0$.
Let $P_u=v_1\ldots v_k$.
Let $t$ be a vertex of $T$ with $u\in X_t$. 
Attaching at $t$ within $T$ a path $t_1\ldots t_k$,
and setting 
$X_{t_1}=\{ u,v_1\},
X_{t_2}=\{ u,v_1,v_2\},
\ldots,
X_{t_k}=\{ u,v_{k-1},v_k\}$
yields a tree-decomposition of $G\cup P_u$ of width at most $\max\{ w,2\}$.
Proceeding similarly for all other vertices of $G$ with positive threshold value 
yields the desired statement,because
$\max\{ \max\{ w,2\},2\}=\max\{ w,2\}$.\\[3mm]
(iii) and (iv) are obvious.
\end{proof}
It is now easy to show the hardness of partial incentives
exploiting results from \cite{behelone,beehpera}.

\begin{proof}[Proof of Theorem \ref{thm:hardpitw}]
This follows easily from Theorem 1.2 in \cite{behelone}.
In fact, in order to compute an optimal dynamic monopoly for a given pair $(G,\tau)$,
where $G$ is a graph of order $n$, and $\tau$ is a threshold function for $G$,
a simple algorithmic reduction allows to assume that $\tau(u)<n$ for every vertex $u$ of $G$.
This implies that the graph $G'$ constructed in Lemma \ref{lemma2}
has order $n'$ at most $n^2$,
and an algorithm computing an optimal partial incentive of $(G',\tau')$ as in Lemma \ref{lemma2} 
in time $(n')^{o(\sqrt{w})}$,
would allow to compute an optimal dynamic monopoly of $(G,\tau)$ in time $n^{2o(\sqrt{w})}=n^{o(\sqrt{w})}$.
\end{proof}

\begin{proof}[Proof of Theorem \ref{thm:nppichordal}]
The stated problem is clearly in NP,
and NP-completeness follows from Theorem 1.3 in \cite{beehpera}
using Lemma \ref{lemma2} as in the previous proof.
\end{proof}
Corollary \ref{corollary:planar} follows easily 
from Theorem \ref{thm:npdynplanar}
and Lemma \ref{lemma2}.

We proceed to the two tractable cases.

\subsection{Partial incentives of graphs of bounded treewidth}\label{subsec2.1}

We need the notion of a nice tree-decomposition introduced by Kloks~\cite{kl}.
For a graph $G$, a {\it tree-decomposition of $G$} is a pair $\left(T,(X_t)_{t\in V(T)}\right)$, 
where $T$ is a tree and $(X_t)_{t\in V(T)}$ is a collection of sets of vertices of $G$ 
satisfying the following properties:
\begin{itemize}
\item $\bigcup\limits_{t\in V(T)} X_t=V(G)$,
\item for every edge $uv$ of $G$, there is a set $X_t$ containing both $u$ and $v$, and
\item for every vertex $u$ of $G$, the set $\{t\in V(T):u\in X_t\}$ induces a subtree of $T$.
\end{itemize}
The {\it width} of the tree-decomposition is $\max\limits_{t\in V(T)}|X_t|-1$,
and the {\it treewidth} of $G$ is the minimum width of a tree-decomposition of $G$.
For a rooted tree-decomposition $\left(T,(X_t)_{t\in V(T)}\right)$ and for every node $t$ of $T$, 
let $V_t$ denote the set of nodes of $T$ that contains $t$ and all its descendants,
and, let $G_t$ be the subgraph of $G$ induced by $\bigcup\limits_{s\in V_t} X_s$.
A tree-decomposition $\left(T,(X_t)_{t\in V(T)}\right)$ of $G$ is {\it nice} if $T$ is a rooted binary tree, 
and every node $t$ of $T$ is of one of the following types:
\begin{itemize}
\item $t$ is a leaf of $T$, and $X_t=\emptyset$ ({\it leaf node}).
\item $t$ has two children $t'$ and $t''$, and $X_t=X_{t'}=X_{t''}$ ({\it join node}).
\item $t$ has a unique child $t'$, and 
\\ either $|X_t\setminus X_{t'}|=1$ and $|X_{t'}\setminus X_t|=0$ ({\it introduce node}),
\\ or $|X_{t'}\setminus X_t|=1$ and $|X_t\setminus X_{t'}|=0$ ({\it forget node}).
\end{itemize}
For a linear order $\prec$ on a set $X$, and two subsets $X_1$ and $X_2$ of $X$,
we write $X_1\prec X_2$ if $x_1\prec x_2$ for every $x_1\in X_1$ and $x_2\in X_2$.

\begin{proof}[Proof of Theorem \ref{thm:pitw}]
Let $(G,\tau)$ be as in the statement of the theorem.

In view of the desired statement, we may assume that $G$ has treewidth exactly $w$.

We may assume that $\tau(u)<n$ for every vertex $u$ of $G$; 
otherwise, we compute 
an optimal partial incentive $\sigma'$ of $(G,\tau')$, 
where 
$$\tau'(u)=\tau(u)-\max\{\tau(u)-(n-1),0\}<n\mbox{  for every vertex $u$ of $G$},$$
and return 
the partial incentive $\sigma$ of $(G,\tau)$
with $\sigma(u)=\sigma'(u)+\max\{\tau(u)-(n-1),0\}$ for every vertex $u$ of $G$. 
It is easy to see that $\sigma$ is optimal.

In time $n^{O(w)}$ \cite{arcopr,kl}
we can determine a nice tree-decomposition $\left(T,(X_t)_{t\in V(T)}\right)$
of $G$ of width at most $w$ such that $n(T)=O(wn)$.
Let $r$ be the root of $T$.

Our approach is dynamic programming on the nice tree-decomposition and to propagate information in a bottom-up fashion within $T$.
For every node $t$ in $T$, we consider 
\begin{enumerate}[(i)]
\item all possible candidates $\sigma_t$ 
for the restriction of an optimal partial incentive $\sigma$ of $(G,\tau)$ to $X_t$, 
\item all possible linear orders $\prec_t$ in which the elements of $X_t$ may enter the hull $H_{(G,\tau-\sigma)}(\emptyset)$, and 
\item all possible amounts $\rho_t$ of help that each vertex in $X_t$ receives from outside of $G_t$ when it enters the hull.
\end{enumerate}
Accordingly, we define the notion of a {\it local cascade} $(\sigma_t,\prec_t,\rho_t)$ for $G_t$, where
\begin{enumerate}[(i)]
\item $\sigma_t:X_t\to [n-1]_0$,
\item $\prec_t$ is a linear order on $X_t$ with $\{u\in X_t:\tau (u)-\sigma_t(u)\leq 0\}\prec_t\{v\in X_t:\tau (v)-\sigma_t(v)> 0\}$, and
\item $\rho_t:X_t\to [n-1]_0$. 
\end{enumerate}
Since $|X_t|\leq w+1\leq n$, 
there are $O\left((w+1)!n^{O(w)}\right)=n^{O(w)}$ 
local cascades for $G_t$.

For a local cascade $(\sigma_t,\prec_t,\rho_t)$ for $G_t$, 
let $\pi_t(\sigma_t,\prec_t,\rho_t)$ 
be a function $\sigma\in [n-1]_0^{V(G_t)}$ 
of minimum cost such that $\sigma|_{X_t}=\sigma_t$, and
\begin{enumerate}[(iv)]
\item there is a linear extension $\prec$ of $\prec_t$ to $V(G_t)$ such that, 
for every vertex $u$ of $G_t$, \label{cond4}
\begin{itemize}
\item either $u\in V(G_t)\setminus X_t$ and $|N_{G_t}^\prec(u)|\geq\tau(u)-\sigma(u)$,
\item or $u\in X_t$ and $|N_{G_t}^\prec(u)|\geq\tau(u)-\sigma(u)-\rho_t(u)$,
\end{itemize}
\end{enumerate}
where $N_{G_t}^\prec(u)$ denotes the set of neighbors of $u$ in $G_t$ 
that appear before $u$ in the linear order $\prec$.
If no such function $\sigma$ exists, we set all the values of $\pi_t(\sigma_t,\prec_t,\rho_t)$ to $\infty$.
Note that $\pi_t(\sigma_t,\prec_t,\rho_t)$ is not necessarily unique.

Since $G_r=G$, and no vertex in $X_r$ has a neighbor outside of $V(G_r)$,
the following claim is obvious from the definitions.

\begin{claim}\label{claim1}
If $(\sigma_r,\prec_r,0)$ is a local cascade for $G_r$ 
that minimizes the cost of $\pi_r(\sigma_r,\prec_r,0)$,
then $\pi_r(\sigma_r,\prec_r,0)$ is an optimal partial incentive 
of $(G,\tau)$.
\end{claim}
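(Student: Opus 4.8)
The plan is to show that, for the minimizing cascade $(\sigma_r,\prec_r,0)$, the function $\pi_r(\sigma_r,\prec_r,0)$ is a partial incentive of $(G,\tau)$ and that its cost equals ${\rm pi}(G,\tau)$; optimality is then immediate. I first unwind the definition of $\pi_r$ at the root. Because $G_r=G$ and no vertex of $X_r$ has a neighbor outside $V(G_r)$, the choice $\rho_r=0$ is the correct one, and with it condition (iv) collapses: for \emph{both} of its alternatives it simply requires a linear extension $\prec$ of $\prec_r$ to $V(G)$ with $|N_G^\prec(u)|\geq\tau(u)-\sigma(u)$ for every vertex $u$. This is precisely the statement that, under the reduced thresholds $\tau-\sigma$, the vertices of $G$ can be ordered so that each enters the hull with enough earlier neighbors, i.e.\ $H_{(G,\tau-\sigma)}(\emptyset)=V(G)$. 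Hence $\sigma:=\pi_r(\sigma_r,\prec_r,0)$ is a partial incentive of $(G,\tau)$, so its cost $\sigma(V(G))$ is at least ${\rm pi}(G,\tau)$.

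For the reverse inequality I would start from an optimal partial incentive $\sigma^*$ of $(G,\tau)$; after the initial reduction to thresholds below $n$, such a $\sigma^*$ may be taken in the search space $[n-1]_0^{V(G)}$. Let $\prec^*$ be an order in which the vertices enter $H_{(G,\tau-\sigma^*)}(\emptyset)$, so that $|N_G^{\prec^*}(u)|\geq\tau(u)-\sigma^*(u)$ for all $u$. The single nonroutine step is to normalize $\prec^*$ so that its restriction to $X_r$ is admissible as the order $\prec_r$ of a local cascade, that is, so that the vertices with $\tau(u)-\sigma^*(u)\leq 0$ precede those with $\tau(u)-\sigma^*(u)>0$. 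A vertex with non-positive effective threshold needs no earlier neighbors, so I may move all such vertices to the front of $\prec^*$; moving a vertex earlier never decreases the number of already-present neighbors of any other vertex, so all the entering inequalities are preserved. Setting $\sigma_r:=\sigma^*|_{X_r}$ and $\prec_r:=\prec^*|_{X_r}$ then yields a local cascade $(\sigma_r,\prec_r,0)$ for which $\sigma^*$, together with the extension $\prec^*$, is an admissible candidate in the definition of $\pi_r(\sigma_r,\prec_r,0)$. By minimality of cost, $\mathrm{cost}\big(\pi_r(\sigma_r,\prec_r,0)\big)\leq\sigma^*(V(G))={\rm pi}(G,\tau)$, and minimizing over all cascades can only decrease this bound.

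Combining the two estimates, the cost-minimizing cascade satisfies $\mathrm{cost}\big(\pi_r(\sigma_r,\prec_r,0)\big)={\rm pi}(G,\tau)$, and since the first paragraph shows this $\pi_r$ is itself a partial incentive, it is an optimal one. I expect the only genuine obstacle to be the normalization of $\prec^*$: one must verify that front-loading the zero-support vertices keeps every vertex's entering condition intact, which rests on the monotonicity observation that pulling a vertex earlier cannot strip any later vertex of an earlier neighbor. Everything else is a direct reading of the definition of $\pi_r$ in the special case of the root, where the absence of outside help forces $\rho_r$ to vanish and makes the two alternatives of condition (iv) coincide with the defining condition of a partial incentive.
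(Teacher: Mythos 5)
Your proposal is correct and is essentially the paper's argument: the paper dismisses this claim as ``obvious from the definitions'' after noting exactly the two facts you use ($G_r=G$ and no vertex of $X_r$ has neighbors outside $V(G_r)$, which makes $\rho_r=0$ the right choice), and your write-up is precisely the careful unwinding of those definitions. The one detail the paper leaves fully implicit --- normalizing the hull-entering order $\prec^*$ so that vertices of non-positive effective threshold come first, justified by the monotonicity of the entering conditions under moving vertices earlier --- is handled correctly in your second paragraph.
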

We now explain how to compute $\pi_t(\sigma_t,\prec_t,\rho_t)$ 
for each node $t$ of $T$ using dynamic programming.

Since $X_t=\emptyset$ for every leaf node $t$, 
$\pi_t(\sigma_t,\prec_t,\rho_t)$
is initialized as an empty function for such a $t$.

\begin{claim}\label{claim2}
For an introduce or a forget node $t$ with child node $t'$, 
and for every local cascade $(\sigma_t,\prec_t,\rho_t)$ for $G_t$, 
given $\pi_{t'}(\sigma_{t'},\prec_{t'},\rho_{t'})$ 
for all local cascades $(\sigma_{t'},\prec_{t'},\rho_{t'})$ for $G_{t'}$, 
$\pi_{t}(\sigma_{t},\prec_{t},\rho_{t})$ 
can be computed in time $n^{O(w)}$.
\end{claim}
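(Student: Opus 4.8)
The plan is to handle the two node types separately, since an introduce node adds a vertex to the bag while a forget node removes one, and each requires a different update rule. In both cases I would compute $\pi_t(\sigma_t,\prec_t,\rho_t)$ by scanning over all local cascades $(\sigma_{t'},\prec_{t'},\rho_{t'})$ of the child $G_{t'}$ and selecting the compatible one of minimum cost. Since there are $n^{O(w)}$ local cascades for $G_{t'}$ and each compatibility check involves only the $O(w)$ vertices of the bags, the total time is $n^{O(w)}$, as claimed.

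For a \emph{forget node} $t$, where $X_t=X_{t'}\setminus\{x\}$ for some vertex $x$, every vertex of $G_t$ already lies in $G_{t'}$, so $G_t=G_{t'}$. The task is to express the conditions of (\ref{cond4}) for $G_t$ in terms of those for $G_{t'}$. I would set $\pi_t(\sigma_t,\prec_t,\rho_t)$ to the minimum-cost $\pi_{t'}(\sigma_{t'},\prec_{t'},\rho_{t'})$ over all child cascades that agree with $(\sigma_t,\prec_t,\rho_t)$ on $X_t$ and whose extra vertex $x$ receives no outside help, i.e.\ $\rho_{t'}(x)=0$. The point is that $x$, being forgotten, has no further neighbors in $G\setminus G_t$, so any help it needs must already be accounted for inside $G_{t'}$; meanwhile $\sigma_{t'}(x)$ becomes part of the returned function's value on $V(G_t)\setminus X_t$. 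The ordering $\prec_{t'}$ must restrict to $\prec_t$ on $X_t$, but $x$ may be inserted at any position consistent with condition (ii) for $G_{t'}$, so I would range over all such insertions.

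For an \emph{introduce node} $t$, where $X_t=X_{t'}\cup\{x\}$ and $x$ has no neighbors in $V(G_{t'})\setminus X_{t'}$, the newly introduced vertex $x$ is isolated from the forgotten part of $G_{t'}$. Here the set of earlier neighbors $N_{G_t}^{\prec}(x)$ consists only of the neighbors of $x$ lying in $X_t$ that precede $x$ under $\prec_t$, a quantity I can read off directly from the bag and the order $\prec_t$. The compatible child cascade $(\sigma_{t'},\prec_{t'},\rho_{t'})$ is obtained by deleting $x$: restrict $\sigma_t$, $\prec_t$, and $\rho_t$ to $X_{t'}$. I would then define $\pi_t$ to extend $\pi_{t'}(\sigma_{t'},\prec_{t'},\rho_{t'})$ by setting its value on $x$ to $\sigma_t(x)$, but only after verifying that $x$ itself satisfies its threshold condition in (\ref{cond4}), namely $|N_{G_t}^{\prec}(x)|\geq\tau(x)-\sigma_t(x)-\rho_t(x)$; otherwise the value is $\infty$. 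Care is needed to confirm that introducing $x$ does not retroactively change $N_{G_t}^{\prec}(u)$ for the other bag vertices $u$—but since $x\in X_t$ and any edge $xu$ with $u\in X_{t'}$ is already visible in the bag $X_{t'}$, this is a bounded bookkeeping check.

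The main obstacle will be verifying that the local cascade data $(\sigma_t,\prec_t,\rho_t)$ carries exactly enough information to make these updates correct, in particular that the help value $\rho_t$ correctly separates contributions from inside $G_t$ and from the rest of $G$. The subtle point is the interaction between the ordering extension in condition (\ref{cond4}) and the threshold inequalities: I must argue that any globally valid $\prec$ and $\sigma$ decompose compatibly across the child, and conversely that gluing valid child data yields a valid parent extension. Once this correspondence is established, the cost and correctness of the recursion follow by induction on $T$, and the time bound is immediate from the count of $n^{O(w)}$ local cascades.
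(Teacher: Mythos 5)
Your handling of the forget node matches the paper's: you range over all values of $\sigma_{t'}$ at the forgotten vertex and all insertion positions for it in the order, force its outside help to $0$, and take the minimum-cost child table entry. That part is correct.

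The introduce node, however, contains a genuine error. You define the compatible child cascade by restricting $\sigma_t$, $\prec_t$, \emph{and} $\rho_t$ to $X_{t'}$. This loses exactly the information your final paragraph worries about: if $x v\in E(G)$ with $v\in X_{t'}$ and $x\prec_t v$, then in condition (iv) for $G_t$ the set $N_{G_t}^{\prec}(v)$ contains $x$, but in the child problem $x$ does not exist in $G_{t'}$, so $N_{G_{t'}}^{\prec}(v)$ is smaller by one. Passing $\rho_{t'}(v)=\rho_t(v)$ to the child therefore demands that $v$ find one more earlier neighbor inside $G_{t'}$ than is actually required, so your recursion can return a function of strictly larger cost than $\pi_t(\sigma_t,\prec_t,\rho_t)$, or $\infty$ when a valid function exists. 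The paper's proof fixes precisely this by transferring $x$'s contribution into the outside-help parameter of the child: it sets $\rho_{t'}(v)=\rho_t(v)+1$ whenever $xv\in E(G)$ and $x\prec_t v$, and $\rho_{t'}(v)=\rho_t(v)$ otherwise. Your remark that the interaction is ``a bounded bookkeeping check'' gestures at the right issue, but the check is not enough --- the lookup must be performed at a \emph{different} child cascade, namely the one with the incremented $\rho$ values; with the restriction you specify, the dynamic program is simply wrong. (Your feasibility test for $x$ itself, $|N_{G_t}^{\prec}(x)|\geq\tau(x)-\sigma_t(x)-\rho_t(x)$, does agree with the paper, since all neighbors of $x$ in $G_t$ lie in the bag.)
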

\begin{proof}[Proof of Claim~\ref{claim2}]
First, we assume that $t$ is an introduce node,
that is, $X_t\setminus X_{t'}=\{u\}$ for some vertex $u$ of $G$ that does not belong to $G_{t'}$.
Clearly, $N_{G_t}(u)\subseteq X_{t'}$.
If $|N_{G[X_t]}^{\prec_t}(u)|<\tau(u)-\sigma_t(u)-\rho_t(u)$, 
then no function satisfies condition (iv),
and we set all the values of $\pi_t(\sigma_t,\prec_t,\rho_t)$ to $\infty$.
Now, let $|N_{G[X_t]}^{\prec_t}(u)|\geq \tau(u)-\sigma_t(u)-\rho_t(u)$.
Every neighbor of $u$ in $X_{t'}$ that appears after $u$ in the linear order $\prec_t$
receives one additional unit of help from $u$ when it enters the hull.
Therefore, for every vertex $v\in X_{t'}$, let
$$\rho_{t'}(v)=
\begin{cases}
\rho_t(v)+1&\mbox{, if $uv\in E(G)$ and $u\prec_t v$, and}\\
\rho_t(v) &\mbox{, otherwise.}
\end{cases}
$$
Now, the function $\pi_t(\sigma_t,\prec_t,\rho_t)$ 
on $V(G_t)$ can be chosen as 
$$\pi_t(\sigma_t,\prec_t,\rho_t)(v)=
\begin{cases}
\sigma_t(u)&\mbox{, if $v=u$, and}\\
\pi_{t'}(\sigma_{t'},\prec_{t'},\rho_{t'})(v) &\mbox{, otherwise,}
\end{cases}
$$
where $\sigma_{t'}$ and $\prec_{t'}$ 
are the restrictions of $\sigma_t$ and $\prec_t$  to $X_{t'}$,
respectively.
Clearly, the computation of $\pi_t(\sigma_t,\prec_t,\rho_t)$ can be done in time $n^{O(w)}$.

Next, we assume that $t$ is a forget node, 
that is, $X_{t'}\setminus X_t=\{ u\}$ for some vertex $u$ of $G_t$,
and $G_t=G_{t'}$.
For every $i\in[n-1]_0$, let $\sigma_i$ be the function on $X_{t'}$ defined as
$$\sigma_i(x)=
\begin{cases}
i&\mbox{, if $x=u$, and}\\
\sigma_t(x) &\mbox{, otherwise.}
\end{cases}
$$
For every $j\in|X_{t'}|$, 
let $\prec_j$ be a linear extension of $\prec_t$ to $X_{t'}$ 
such that the vertex $u$ is the $j$th vertex in the order $\prec_j$.
Note that $v$ does not have any neighbors outside of $V(G_t)$.
Therefore, for every vertex $v\in X_{t'}$, let  
$$\rho_{t'}(v)=
\begin{cases}
0&\mbox{, if $v=u$, and}\\
\rho_t(v) &\mbox{, otherwise.}
\end{cases}
$$
Now, the function $\pi_t(\sigma_t,\prec_t,\rho_t)$ 
can be chosen as the function $\pi_{t'}(\sigma_i,\prec_j,\rho_{t'})$ 
that minimizes the cost 
among all choices of $i\in[n-1]_0$ and $j\in |X_{t'}|$.
Clearly, this can also be done in time $n^{O(w)}$.
\end{proof}

\begin{claim}\label{claim3}
For every join node $t$ with children nodes $t'$ and $t''$, 
and for every local cascade $(\sigma_t,\prec_t,\rho_t)$ for $G_t$, 
given $\pi_s(\sigma_s,\prec_s,\rho_s)$ 
for all local cascades $(\sigma_s,\prec_s,\rho_s)$ for $G_s$ and both $s\in\{t',t''\}$, 
$\pi_t(\sigma_t,\prec_t,\rho_t)$ can be computed in time $n^{O(w)}$.
\end{claim}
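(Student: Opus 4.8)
The plan is to handle the join node by combining the two children's optimal partial incentives while respecting the constraint that a vertex's help now comes from \emph{both} subtrees. The essential difficulty is that the separator $X_t=X_{t'}=X_{t''}$ is shared, so for a vertex $v\in X_t$ the neighbors appearing before $v$ in the final order are split between $G_{t'}$ and $G_{t''}$, and the single value $\rho_t(v)$ must be apportioned as the sum of the help $v$ receives from outside $G_{t'}$ and from outside $G_{t''}$. I would therefore iterate over all ways to split the externally-received help and the externally-available contribution across the two sides.

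Concretely, fix a local cascade $(\sigma_t,\prec_t,\rho_t)$ for $G_t$. I would set $\sigma_{t'}=\sigma_{t''}=\sigma_t$ and $\prec_{t'}=\prec_{t''}=\prec_t$, since the restriction of $\sigma$ and of the linear order to the shared bag must agree with $\sigma_t$ and $\prec_t$. The partial incentive on the interior vertices of $G_{t'}$ and of $G_{t''}$ can be chosen independently, so the cost contributions simply add, except that every vertex $v\in X_t$ is counted in both $G_{t'}$ and $G_{t''}$ and must be paid for only once; hence the combined cost is the sum of the two children's costs minus the overcounted $\sigma_t(X_t)$. For the constraint in condition~(iv), note that for $v\in X_t$ the neighbors of $v$ in $G_t$ that precede $v$ are exactly those in $G_{t'}$ plus those in $G_{t''}$, but the neighbors inside the shared bag $X_t$ (which lie in both graphs) are counted twice. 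I would account for this by defining a function $\mu(v)=|N_{G[X_t]}^{\prec_t}(v)|$ recording the doubly-counted contribution, so that the true number of earlier neighbors of $v$ in $G_t$ equals the number from $G_{t'}$ plus the number from $G_{t''}$ minus $\mu(v)$.

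The remaining freedom is the distribution of the help $\rho_t(v)$ among the two sides. Since $v$'s outside-$G_t$ help is unchanged, but the ``outside $G_{t'}$'' neighbors of $v$ include the interior vertices of $G_{t''}$ and vice versa, I would guess, for each $v\in X_t$, a split $\rho_t(v)+\mu(v)=\rho_{t'}(v)+\rho_{t''}(v)$, where $\rho_{t'}(v)$ represents the help $v$ obtains from outside $G_{t'}$ (that is, from $G_{t''}$'s interior together with the genuine outside help), and symmetrically for $\rho_{t''}(v)$. The additive $\mu(v)$ term compensates for the fact that the shared-bag neighbors, once counted in \emph{both} $\rho_{t'}$ and $\rho_{t''}$, are supplied internally and must not be double-charged against the threshold. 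There are at most $n$ choices for each $\rho_{t'}(v)$ and hence $n^{O(w)}$ admissible splits in total; for each I would combine $\pi_{t'}(\sigma_t,\prec_t,\rho_{t'})$ and $\pi_{t''}(\sigma_t,\prec_t,\rho_{t''})$ into a function on $V(G_t)$ that equals the former on the interior of $G_{t'}$, the latter on the interior of $G_{t''}$, and $\sigma_t$ on $X_t$, and I would keep the cheapest valid result.

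The main obstacle, and the step I would verify most carefully, is the correctness of this split: I must check that a linear extension $\prec$ of $\prec_t$ witnessing condition~(iv) for $G_t$ exists if and only if there are witnessing extensions for $G_{t'}$ and $G_{t''}$ that agree on $X_t$ and whose induced help values $\rho_{t'},\rho_{t''}$ satisfy the $\mu$-corrected sum identity. The forward direction is immediate by restriction; the reverse direction requires interleaving the two interior orders around the common order on $X_t$, which is always possible because the two interiors are non-adjacent in $G_t$ (any edge between them would force a common bag, contradicting that their only shared vertices lie in $X_t$). Once the interleaving is justified and the threshold bookkeeping is confirmed, the time bound $n^{O(w)}$ follows since we enumerate $n^{O(w)}$ splits and perform only constant work per vertex of $G_t$ for each.
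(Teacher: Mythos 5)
Your overall architecture is the same as the paper's: fix $\sigma_{t'}=\sigma_{t''}=\sigma_t$ and $\prec_{t'}=\prec_{t''}=\prec_t$, enumerate for each $v\in X_t$ a split of the help between the two sides, look up the children's tables, and merge the two witnessing orders (which works, as you say, because there are no edges between $V(G_{t'})\setminus X_t$ and $V(G_{t''})\setminus X_t$). The gap is the split identity itself: $\rho_t(v)+\mu(v)=\rho_{t'}(v)+\rho_{t''}(v)$ is wrong, and the combination built on it is both unsound and incomplete.

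To see this, write $\mu(v)=|N_{G[X_t]}^{\prec_t}(v)|$ and let $a'(v),a''(v)$ be the numbers of earlier neighbors of $v$ in $V(G_{t'})\setminus X_t$ and $V(G_{t''})\setminus X_t$. The child conditions read $\mu(v)+a'(v)\geq\tau(v)-\sigma_t(v)-\rho_{t'}(v)$ and $\mu(v)+a''(v)\geq\tau(v)-\sigma_t(v)-\rho_{t''}(v)$, while the parent needs $\mu(v)+a'(v)+a''(v)\geq\tau(v)-\sigma_t(v)-\rho_t(v)$. Since the bag neighbors $\mu(v)$ are counted in \emph{both} children but only once in the parent, the admissible sums must satisfy $\rho_{t'}(v)+\rho_{t''}(v)\leq\tau(v)-\sigma_t(v)-\mu(v)+\rho_t(v)$: the correction enters with a \emph{minus} sign, and the term $\tau(v)-\sigma_t(v)$ cannot be avoided. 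Your cap $\rho_t(v)+\mu(v)$ exceeds this whenever $2\mu(v)>\tau(v)-\sigma_t(v)$, and then the DP is unsound. Concretely, let $X_t=\{u_1,u_2,v\}$ with $u_1,u_2$ adjacent to $v$, $\tau(u_1)=\tau(u_2)=0$, $\tau(v)=3$, $\sigma_t\equiv 0$, $\rho_t\equiv 0$, and $u_1\prec_t u_2\prec_t v$. Your rule allows $\rho_{t'}(v)=\rho_{t''}(v)=1$; each child then only requires $v$ to have $3-1=2$ earlier neighbors inside its own graph, which the two bag vertices already supply, so both children accept entries in which $v$ has no earlier interior neighbor at all. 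The merged function has $|N_{G_t}^{\prec}(v)|=2<3=\tau(v)-\sigma_t(v)-\rho_t(v)$, violating condition (iv), yet your DP records it as $\pi_t(\sigma_t,\prec_t,\rho_t)$; this error propagates upward and the final output need not be a partial incentive. Completeness fails as well: with $\mu(v)=0$, $\tau(v)=2$, $\rho_t(v)=0$, your identity forces $\rho_{t'}(v)=\rho_{t''}(v)=0$, so each child alone must supply both earlier neighbors, and the (possibly optimal) parent solution in which each side supplies one is never examined. Even the forward direction you call immediate fails: restricting a parent witness induces $\rho_{t'}(v)=\rho_t(v)+a''(v)$ and $\rho_{t''}(v)=\rho_t(v)+a'(v)$, whose sum is $2\rho_t(v)+a'(v)+a''(v)$, not $\rho_t(v)+\mu(v)$. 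The paper's bookkeeping repairs exactly this point: it computes the residual interior demand $\rho_{\rm int}(v)=\max\left\{\tau(v)-\sigma_t(v)-\rho_t(v)-\mu(v),0\right\}$, enumerates splits $\rho_{\rm int}=\rho'_{\rm int}+\rho''_{\rm int}$, and looks up $\pi_{t'}(\sigma_t,\prec_t,\rho_t+\rho''_{\rm int})$ and $\pi_{t''}(\sigma_t,\prec_t,\rho_t+\rho'_{\rm int})$; equivalently, the admissible sums are $\rho_{t'}(v)+\rho_{t''}(v)=2\rho_t(v)+\rho_{\rm int}(v)$, which is the $\mu$-corrected identity your verification step would have had to produce.
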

\begin{proof}[Proof of Claim~\ref{claim3}]
By the definition of a nice tree decomposition, 
$G_t=G_{t'}\cup G_{t''}$ 
and
$X_t=X_{t'}=X_{t''}=V(G_{t'})\cap V(G_{t''})$,
in particular, 
there are no edges between $V(G_{t'})\setminus X_t$ and $V(G_{t''})\setminus X_t$.

Let $\rho_{\rm int}\in [n-1]_0^{X_t}$ be such that 
$$\rho_{\rm int}(u)=
\max\left\{\tau(u)-\sigma_t(u)-\rho_t(u)-\left|N_{G[X_t]}^{\prec_t}(u)\right|,0\right\}$$
for every vertex $u$ in $X_t$, 
that is,  
$\rho_{\rm int}(u)$
is the (minimum) number of neighbors that $u$ needs in $V(G_t)\setminus X_t$ 
when entering the hull according to the local cascade 
$(\sigma_t,\prec_t,\rho_t)$.
Since these neighbors come from the two disjoint sets
$V(G_{t'})\setminus X_t$
and 
$V(G_{t''})\setminus X_t$,
say
$\rho'_{\rm int}(u)$ from the first set
and 
$\rho''_{\rm int}(u)$ from the second set,
it follows that 
$\pi_t(\sigma_t,\prec_t,\rho_t)$ can be chosen as
the common extension to $V(G_t)$ of the two functions
$\pi_{t'}(\sigma_t,\prec_t,\rho_t+\rho''_{\rm int})$
and
$\pi_{t''}(\sigma_t,\prec_t,\rho_t+\rho'_{\rm int})$,
where 
$\rho'_{\rm int},\rho''_{\rm int}\in [n-1]_0^{X_t}$
are chosen such that 
$\rho_{\rm int}=\rho'_{\rm int}+\rho''_{\rm int}$
and 
the cost of $\pi_t(\sigma_t,\prec_t,\rho_t)$
is minimized.
Clearly, this can be done in time $n^{O(w)}$.
\end{proof}
Since $T$ has order $O(wn)$,
the overall computation takes $n^{O(w)}$ time,
which completes the proof of Theorem~\ref{thm:pitw}.
\end{proof}

\subsection{Partial incentives of interval graphs with bounded thresholds}

Our approach is similar to the one in~\cite{beehpera} 
for computing a minimum dynamic monopoly 
of an interval graph with bounded thresholds.
We first adapt an auxiliary result from \cite{cedoperasz,chhuliwuye} 
to the setting of partial incentives.

\begin{lemma}\label{lem:tconnected}
Let $t$ be a non-negative integer.
Let $G$ be a $t$-connected chordal graph,
and let $\tau$ be a threshold function for $G$ 
with $\tau(u)\leq t$ for every vertex $u$ of $G$.
\begin{enumerate}[(i)]
\item For every clique $C$ in $G$ of order $t$ 
with vertices $v_0,v_1,\ldots,v_{t-1}$, the function 
$$\sigma(u)=
\begin{cases}
\tau(v_i)-i&\mbox{, if $u=v_i$ for some $i\in [t-1]_0$, and}\\
0 &\mbox{, otherwise.}
\end{cases}
$$
is a partial incentive of $(G,\tau)$.
\item ${\rm pi}(G,\tau)\leq {t+1\choose 2}$.
\end{enumerate}
\end{lemma}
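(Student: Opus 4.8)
The plan is to prove (i) by showing that the specified $\sigma$ triggers a cascade from $\emptyset$ that first engulfs the clique $C$ and then, using $t$-connectivity and chordality, the entire graph; part (ii) follows by exhibiting a clique of order $t$ and bounding the weight of $\sigma$.

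For (i) I would first fix the labeling of $C$ so that $\tau(v_0)\le\tau(v_1)\le\cdots\le\tau(v_{t-1})$, which is the labeling most likely to keep $\sigma(v_i)=\tau(v_i)-i$ non-negative (and makes it automatic in the extremal case $\tau\equiv t$); where $\tau(v_i)-i<0$ one clamps the value to $0$, which only lowers both the weight and the effective threshold. The key feature of the definition is that the reduced threshold $(\tau-\sigma)(v_i)$ equals $i$ (or $\min\{i,\tau(v_i)\}$ after clamping). Hence $v_0$ has reduced threshold $0$ and enters $H_{(G,\tau-\sigma)}(\emptyset)$ at once, and then, inductively, once $v_0,\ldots,v_{i-1}$ lie in the hull the vertex $v_i$ has these $i$ clique-neighbors in the hull, meeting its reduced threshold $i$; so $v_i$ enters the hull as well, giving $C\subseteq H_{(G,\tau-\sigma)}(\emptyset)$.

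The main work, and the main obstacle, is the spreading step: I claim that in a $t$-connected chordal graph with $\tau\le t$, once a hull set $A\supseteq C$ contains a clique of order $t$, either $A=V(G)$ or some vertex outside $A$ has at least $t$ neighbors in $A$. Iterating this one-step claim forces the hull to grow to $V(G)$, since each newly activated vertex $u$ satisfies $|N_G(u)\cap A|\ge t\ge\tau(u)=(\tau-\sigma)(u)$ (as $\sigma$ vanishes off $C$). To establish the one-step claim I would argue by contradiction: if every $u\notin A$ had at most $t-1$ neighbors in $A$, I pick a component $B$ of $G-A$ and a minimal separator $S\subseteq N_G(B)\subseteq A$ separating $B$ from the rest of $A$. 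By Dirac's theorem $S$ is a clique, by $t$-connectivity $|S|\ge t$, and the chordal structure of the full component on the $B$-side yields a vertex adjacent to all of $S$, i.e.\ a vertex outside $A$ with at least $t$ neighbors in $A$ — the desired contradiction. This separator-and-simplicial argument is precisely the auxiliary result of \cite{cedoperasz,chhuliwuye} that we adapt, and the delicate point is checking that it transfers to the partial-incentive cascade, which it does because the cascade only ever needs $t$ infected neighbors.

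Finally, for (ii) I would note that a $t$-connected chordal graph contains a clique of order at least $t$: a simplicial vertex $s$ has a clique neighborhood, and minimum degree at least $t$ (or $G=K_t$) forces $|\{s\}\cup N_G(s)|\ge t+1$, so $G$ has a clique $C$ of order exactly $t$. Applying (i) to $C$ produces a partial incentive of weight $\sigma(V(G))=\sum_{i=0}^{t-1}(\tau(v_i)-i)=\big(\sum_{v\in C}\tau(v)\big)-\binom{t}{2}$; since each $\tau(v)\le t$ this is at most $t^2-\binom{t}{2}=\binom{t+1}{2}$, the claimed bound. I would also observe that this weight is independent of the labeling of $C$, so the ordering chosen for (i) costs nothing here.
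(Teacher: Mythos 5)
Your cascade inside $C$ and your part (ii) are fine: the clamping $\sigma(v_i)=\max\{\tau(v_i)-i,0\}$ is a sensible repair of a non-negativity issue that the statement (and the paper) glosses over, the reordering is actually unnecessary once you clamp, and the weight computation is correct. The trouble sits exactly where you put the main work, in the one-step claim, whose proof as sketched has two concrete gaps. First, the separator you want to use need not exist: if $G-A$ has a single component $B$ with $N_G(B)=A$, there is no ``rest of $A$'' to separate $B$ from, so there is no minimal separator at all, Dirac's theorem has nothing to apply to, and the argument produces nothing. This is not an exotic degenerate case --- it is the generic situation immediately after $C$ has entered the hull (take $A=C$ with $G-C$ connected and every vertex of $C$ having an outside neighbor, e.g.\ a fan with $t=2$), so your iteration can fail at its very first step. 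Second, even when a minimal separator $S\subseteq N_G(B)$ separating $B$ from the rest does exist, the vertex that chordality gives you adjacent to all of $S$ lies in the full component of $G-S$ containing $B$; but that component is in general strictly larger than $B$, since it also contains all of $N_G(B)\setminus S\subseteq A$. The dominating vertex may therefore lie inside $A$, in which case you get no contradiction, so the conclusion ``a vertex outside $A$ with at least $t$ neighbors in $A$'' does not follow as stated.

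Both gaps are repairable by one lemma that you should isolate and prove: if $H$ is chordal, $K\subseteq V(H)$ is a clique, $B=V(H)\setminus K$ is connected, and every vertex of $K$ has a neighbor in $B$, then some vertex of $B$ is adjacent to all of $K$ (induction on $|B|$, deleting a simplicial vertex of $H$ that lies in $B$). In the separator case apply it to the induced subgraph $G[B\cup S]$ --- every vertex of $S\subseteq N_G(B)$ has a neighbor in $B$ by definition, and $S$ is a clique of size at least $t$ by Dirac and $t$-connectivity --- which forces the dominating vertex into $B$; in the no-separator case apply it to $G[B\cup C]$, since there every vertex of $A\supseteq C$ has a neighbor in $B$. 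With these repairs your static claim is true and your route works, and it is genuinely different from the paper's: the paper instead runs a short induction on $|V(G)|$, picking a simplicial vertex $u\notin C$ (two non-adjacent simplicial vertices cannot both lie in the clique $C$), noting that $G-u$ is again $t$-connected and chordal so the hull contains $V(G)\setminus\{u\}$ by induction, and finishing with $d_G(u)\geq t\geq\tau(u)$. That argument needs no separators and no dominating-vertex lemma, which is why it fits in four lines; your approach, once fixed, proves the stronger pointwise statement that every proper superset of a $t$-clique has an outside vertex with $t$ neighbors inside it.
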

\begin{proof}
(i) Clearly, $H=H_{(G,\tau-\sigma)}(\emptyset)$ contains $C$.
If $G$ is a clique, the bound on $\tau$ implies that $H$ contains all vertices of $G$.
If $G$ is not a clique, 
then $G$ has a simplicial vertex $u$ that does not belong to $C$.
Since $G-u$ is $t$-connected, it follows, by an inductive argument,
that $H$ contains $V(G)\setminus \{ u\}$,
and, since $d_G(u)\geq t$, $H$ contains also $u$,
that is, $\sigma$ is a partial incentive of $(G,\tau)$.\\[3mm]
(ii) This follows from the known fact that every $t$-connected chordal graph 
contains a clique of order $t$, and that 
$\sum\limits_{i=0}^{t-1}(t-i)={t+1\choose 2}$.
\end{proof}
Let $(G,\tau)$ be as in the statement of Theorem~\ref{thm:piinterval}.
We construct a sequence 
$G_1\subseteq G_2\subseteq \ldots \subseteq G_k$ 
of subgraphs of $G$ 
in such a way that $G_k=G$, and 
that Lemma~\ref{lem:tconnected} implies 
that the cost of every optimal partial incentive of $(G,\tau)$
within a suitable supergraph $\partial G_i$ 
of each graph $G_i-V(G_{i-1})$ is at most ${t+1\choose 2}$,
cf. condition (iv) below.
Clearly, we may assume that $G$ is connected.
Let $n$ be the order of $G$. 
In linear
time \cite{bolu}, we can determine an interval representation $(I(u))_{u\in V(G)}$ of $G$, that is, two distinct vertices $u$
and $v$ of $G$ are adjacent if and only if the intervals $I(u)$ and
$I(v)$ intersect. 
By applying simple manipulations, we may assume
that each interval $I(u)$ is closed, and that the $2n$ endpoints of
the $n$ intervals are all distinct.

Let $x_1<x_2<\ldots<x_{2n}$ be the endpoints of the intervals. For
each $i\in [2n-1]$, let $C_i$ be the set of vertices $u$ of $G$ with
$I_i:=[x_i,x_{i+1}]\subseteq I(u)$, and let $c_i=|C_i|$. Since each
$x_i$ is either the right endpoint of exactly one interval or the left
endpoint of exactly one interval, we have $|c_{i+1}-c_i|=1$ for every
$i\in [2n-1]$.

\begin{lemma}[Bessy et al.~\cite{beehpera}]\label{lem:oldclaim1}
If $C$ is a minimal vertex cut of $G$, then $C=C_i$ for some $i\in
[2n-2]\setminus \{ 1\}$ with $c_i<\min\{ c_{i-1},c_{i+1}\}$.
\end{lemma}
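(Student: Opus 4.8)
The plan is to analyze the cut through the \emph{coverage function} $y\mapsto f(y):=\{u\in V(G):y\in I(u)\}$, which for every $y\in(x_i,x_{i+1})$ equals $C_i$, since the $2n$ endpoints are distinct and so no interval starts or ends strictly inside an elementary interval $(x_i,x_{i+1})$. Note first that each $C_i$ is a clique, being a set of pairwise intersecting intervals, and that we may assume $G$ is connected, so $c_i\geq 1$ for all $i$. Writing $L_i$ for the set of vertices whose interval has right endpoint at most $x_i$ and $R_i$ for those whose interval has left endpoint at least $x_{i+1}$, one has $V(G)\setminus C_i=L_i\cup R_i$ with $L_i\cap R_i=\emptyset$ and no edges between $L_i$ and $R_i$; in particular $L_1=\emptyset$ and $R_{2n-1}=\emptyset$, which will pin down the range of the index produced below.

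The crux is a structural description of $G-C$. Since the union of the intervals of any connected induced subgraph of an interval graph is itself an interval, each component of $G-C$ occupies a \emph{span} that is a single interval; as intervals from different components are non-adjacent and hence disjoint, these spans are pairwise disjoint and therefore linearly order the components of $G-C$ from left to right. I would pick two consecutive components and let $(e,s)$ be the open gap between the right end $e$ of the left span and the left end $s$ of the right span. For any $y\in(e,s)$, every interval containing $y$ meets no component (the spans avoid $y$), so $f(y)\subseteq C$; moreover a crossing argument shows $f(y)$ separates the two components: on any path in $G$ joining them, consecutive intervals overlap while the two endpoints of the path lie strictly on opposite sides of $y$, so some interval of the path contains $y$ and hence lies in $f(y)$. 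Thus $f(y)$ is a vertex cut contained in $C$, and inclusion-minimality of $C$ forces $f(y)=C$ for \emph{every} $y\in(e,s)$.

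Constancy of $f$ on $(e,s)$ means no interval starts or ends in the open gap, so $e$ and $s$ are consecutive endpoints; writing $e=x_i$ and $s=x_{i+1}$ gives $C=C_i$. Here $x_i$ is a right endpoint (the rightmost point of the left span is the right end of one of its intervals) and $x_{i+1}$ is a left endpoint (the leftmost point of the right span). Hence passing from $C_{i-1}$ to $C_i$ exactly one interval ends and none begins, and passing from $C_i$ to $C_{i+1}$ exactly one interval begins and none ends; together with $|c_{j+1}-c_j|=1$ this yields $c_{i-1}=c_i+1$ and $c_{i+1}=c_i+1$, that is $c_i<\min\{c_{i-1},c_{i+1}\}$. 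Finally $L_i$ contains the left component and $R_i$ contains the right component, so both are nonempty, whence $i\neq 1$ and $i\neq 2n-1$, giving $i\in[2n-2]\setminus\{1\}$.

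The main obstacle is the middle step, namely establishing $f(y)=C$ on the entire gap: this is exactly where inclusion-minimality of the cut is used, and where one must be careful that the separation argument is applied to arbitrary paths in $G$ (which may themselves pass through $C$) rather than only to paths in $G-C$. Once this is in hand, the local-minimum conclusion and the index range follow from the elementary bookkeeping of left versus right endpoints together with the given step-size identity $|c_{i+1}-c_i|=1$.
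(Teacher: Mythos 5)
The paper does not actually prove this lemma: it is imported verbatim from Bessy et al.~\cite{beehpera} and used as a black box, so there is no in-paper argument to compare yours against. Judged on its own, your proof is correct and complete. The chain of reasoning holds up at every point I checked: the coverage function $f(y)$ is indeed constant and equal to $C_i$ on each open elementary interval $(x_i,x_{i+1})$; spans of distinct components of $G-C$ are pairwise disjoint intervals, so consecutive components and the gap $(e,s)$ are well defined; the crossing argument correctly applies to arbitrary $A$--$B$ paths in $G$ (take the first vertex $u_j$ on the path whose interval reaches past $y$; since $I(u_{j-1})$ ends before $y$ and meets $I(u_j)$, the interval $I(u_j)$ must contain $y$), so $f(y)$ is a vertex cut contained in $C$ and inclusion-minimality forces $f(y)=C$ throughout the gap; constancy of $f$ on $(e,s)$ rules out endpoints strictly inside the gap, and since $e$ is a right endpoint (of an interval in the left component) and $s$ a left endpoint (of an interval in the right component), they are consecutive, giving $C=C_i$ with $c_{i-1}=c_{i+1}=c_i+1$; finally the nonemptiness of $L_i$ and $R_i$ pins $i$ into $[2n-2]\setminus\{1\}$. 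For what it is worth, the proof in \cite{beehpera} can be run slightly differently, starting from the standard fact that every vertex of an inclusion-minimal cut has a neighbor in every component of $G-C$, which immediately shows that every interval of $C$ spans the gap between two components; your route through the constancy of $f(y)$ reaches the same conclusion and has the small advantage of localizing exactly where minimality is used.
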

Let $j_1<j_2<\ldots<j_{k-1}$ be the indices $i$ in $[2n-1]\setminus \{
1\}$ with $c_i<\min\{ c_{i-1},c_{i+1},t\}$, and let $j_k=2n-1$. For
$i\in [k]$, let $G_i$ be the subgraph of $G$ induced by
$V_i:=C_1\cup\cdots\cup C_{j_i}$, and let $B_i=C_{j_i}$. Note that
$B_i$ contains all vertices in $V_i$ that have a neighbor in
$V(G)\setminus V_i$, and that $|B_i|<t$. Let $\partial V_1=V_1$, and,
for $i\in [k]\setminus \{ 1\}$, let $\partial V_i=(V_i\setminus
V_{i-1})\cup B_{i-1}$. For $i\in [k]$, let $\partial G_i$ be the
subgraph of $G$ induced by $\partial V_i$, cf Figure~\ref{fig:int-rep}.

\begin{figure}[htbp]
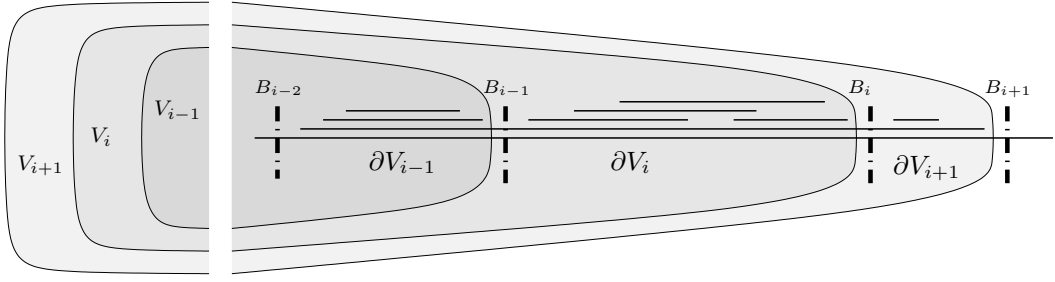

\begin{center}
\ifx\XFigwidth\undefined\dimen1=0pt\else\dimen1\XFigwidth\fi
\divide\dimen1 by 10374
\ifx\XFigheight\undefined\dimen3=0pt\else\dimen3\XFigheight\fi
\divide\dimen3 by 2724
\ifdim\dimen1=0pt\ifdim\dimen3=0pt\dimen1=2486sp\dimen3\dimen1
  \else\dimen1\dimen3\fi\else\ifdim\dimen3=0pt\dimen3\dimen1\fi\fi
\tikzpicture[x=+\dimen1, y=+\dimen3]
{\ifx\XFigu\undefined\catcode`\@11
\def\temp{\alloc@1\dimen\dimendef\insc@unt}\temp\XFigu\catcode`\@12\fi}
\XFigu2486sp
\ifdim\XFigu<0pt\XFigu-\XFigu\fi
\definecolor{xfigc32}{rgb}{0.612,0.000,0.000}
\definecolor{xfigc33}{rgb}{0.549,0.549,0.549}
\definecolor{xfigc34}{rgb}{0.549,0.549,0.549}
\definecolor{xfigc35}{rgb}{0.259,0.259,0.259}
\definecolor{xfigc36}{rgb}{0.549,0.549,0.549}
\definecolor{xfigc37}{rgb}{0.259,0.259,0.259}
\definecolor{xfigc38}{rgb}{0.549,0.549,0.549}
\definecolor{xfigc39}{rgb}{0.259,0.259,0.259}
\definecolor{xfigc40}{rgb}{0.549,0.549,0.549}
\definecolor{xfigc41}{rgb}{0.259,0.259,0.259}
\definecolor{xfigc42}{rgb}{0.549,0.549,0.549}
\definecolor{xfigc43}{rgb}{0.259,0.259,0.259}
\definecolor{xfigc44}{rgb}{0.557,0.557,0.557}
\definecolor{xfigc45}{rgb}{0.761,0.761,0.761}
\definecolor{xfigc46}{rgb}{0.431,0.431,0.431}
\definecolor{xfigc47}{rgb}{0.267,0.267,0.267}
\definecolor{xfigc48}{rgb}{0.557,0.561,0.557}
\definecolor{xfigc49}{rgb}{0.443,0.443,0.443}
\definecolor{xfigc50}{rgb}{0.682,0.682,0.682}
\definecolor{xfigc51}{rgb}{0.200,0.200,0.200}
\definecolor{xfigc52}{rgb}{0.580,0.576,0.584}
\definecolor{xfigc53}{rgb}{0.455,0.439,0.459}
\definecolor{xfigc54}{rgb}{0.333,0.333,0.333}
\definecolor{xfigc55}{rgb}{0.702,0.702,0.702}
\definecolor{xfigc56}{rgb}{0.765,0.765,0.765}
\definecolor{xfigc57}{rgb}{0.427,0.427,0.427}
\definecolor{xfigc58}{rgb}{0.271,0.271,0.271}
\definecolor{xfigc59}{rgb}{0.886,0.886,0.933}
\definecolor{xfigc60}{rgb}{0.580,0.580,0.604}
\definecolor{xfigc61}{rgb}{0.859,0.859,0.859}
\definecolor{xfigc62}{rgb}{0.631,0.631,0.718}
\definecolor{xfigc63}{rgb}{0.929,0.929,0.929}
\definecolor{xfigc64}{rgb}{0.878,0.878,0.878}
\definecolor{xfigc65}{rgb}{0.525,0.675,1.000}
\definecolor{xfigc66}{rgb}{0.439,0.439,1.000}
\definecolor{xfigc67}{rgb}{0.776,0.718,0.592}
\definecolor{xfigc68}{rgb}{0.937,0.973,1.000}
\definecolor{xfigc69}{rgb}{0.863,0.796,0.651}
\definecolor{xfigc70}{rgb}{0.251,0.251,0.251}
\definecolor{xfigc71}{rgb}{0.502,0.502,0.502}
\definecolor{xfigc72}{rgb}{0.753,0.753,0.753}
\definecolor{xfigc73}{rgb}{0.667,0.667,0.667}
\definecolor{xfigc74}{rgb}{0.780,0.765,0.780}
\definecolor{xfigc75}{rgb}{0.337,0.318,0.318}
\definecolor{xfigc76}{rgb}{0.843,0.843,0.843}
\definecolor{xfigc77}{rgb}{0.522,0.502,0.490}
\definecolor{xfigc78}{rgb}{0.824,0.824,0.824}
\definecolor{xfigc79}{rgb}{0.227,0.227,0.227}
\definecolor{xfigc80}{rgb}{0.271,0.451,0.667}
\definecolor{xfigc81}{rgb}{0.482,0.475,0.647}
\definecolor{xfigc82}{rgb}{0.451,0.459,0.549}
\definecolor{xfigc83}{rgb}{0.969,0.969,0.969}
\definecolor{xfigc84}{rgb}{0.255,0.271,0.255}
\definecolor{xfigc85}{rgb}{0.388,0.365,0.808}
\definecolor{xfigc86}{rgb}{0.745,0.745,0.745}
\definecolor{xfigc87}{rgb}{0.318,0.318,0.318}
\definecolor{xfigc88}{rgb}{0.906,0.890,0.906}
\definecolor{xfigc89}{rgb}{0.000,0.000,0.286}
\definecolor{xfigc90}{rgb}{0.475,0.475,0.475}
\definecolor{xfigc91}{rgb}{0.188,0.204,0.188}
\definecolor{xfigc92}{rgb}{0.255,0.255,0.255}
\definecolor{xfigc93}{rgb}{0.780,0.714,0.588}
\definecolor{xfigc94}{rgb}{0.867,0.616,0.576}
\definecolor{xfigc95}{rgb}{0.945,0.925,0.878}
\definecolor{xfigc96}{rgb}{0.886,0.784,0.659}
\definecolor{xfigc97}{rgb}{0.882,0.882,0.882}
\definecolor{xfigc98}{rgb}{0.855,0.478,0.102}
\definecolor{xfigc99}{rgb}{0.945,0.894,0.102}
\definecolor{xfigc100}{rgb}{0.533,0.490,0.761}
\definecolor{xfigc101}{rgb}{0.690,0.631,0.576}
\definecolor{xfigc102}{rgb}{0.514,0.486,0.867}
\definecolor{xfigc103}{rgb}{0.839,0.839,0.839}
\definecolor{xfigc104}{rgb}{0.549,0.549,0.647}
\definecolor{xfigc105}{rgb}{0.290,0.290,0.290}
\definecolor{xfigc106}{rgb}{0.549,0.420,0.420}
\definecolor{xfigc107}{rgb}{0.353,0.353,0.353}
\definecolor{xfigc108}{rgb}{0.388,0.388,0.388}
\definecolor{xfigc109}{rgb}{0.718,0.608,0.451}
\definecolor{xfigc110}{rgb}{0.255,0.576,1.000}
\definecolor{xfigc111}{rgb}{0.749,0.439,0.231}
\definecolor{xfigc112}{rgb}{0.859,0.467,0.000}
\definecolor{xfigc113}{rgb}{0.855,0.722,0.000}
\definecolor{xfigc114}{rgb}{0.000,0.392,0.000}
\definecolor{xfigc115}{rgb}{0.353,0.420,0.231}
\definecolor{xfigc116}{rgb}{0.827,0.827,0.827}
\definecolor{xfigc117}{rgb}{0.557,0.557,0.643}
\definecolor{xfigc118}{rgb}{0.953,0.725,0.365}
\definecolor{xfigc119}{rgb}{0.537,0.600,0.420}
\definecolor{xfigc120}{rgb}{0.392,0.392,0.392}
\definecolor{xfigc121}{rgb}{0.718,0.902,1.000}
\definecolor{xfigc122}{rgb}{0.525,0.753,0.925}
\definecolor{xfigc123}{rgb}{0.741,0.741,0.741}
\definecolor{xfigc124}{rgb}{0.827,0.584,0.322}
\definecolor{xfigc125}{rgb}{0.596,0.824,0.996}
\definecolor{xfigc126}{rgb}{0.380,0.380,0.380}
\definecolor{xfigc127}{rgb}{0.682,0.698,0.682}
\definecolor{xfigc128}{rgb}{1.000,0.604,0.000}
\definecolor{xfigc129}{rgb}{0.549,0.612,0.420}
\definecolor{xfigc130}{rgb}{0.969,0.420,0.000}
\definecolor{xfigc131}{rgb}{0.353,0.420,0.224}
\definecolor{xfigc132}{rgb}{0.549,0.612,0.420}
\definecolor{xfigc133}{rgb}{0.549,0.612,0.482}
\definecolor{xfigc134}{rgb}{0.094,0.290,0.094}
\definecolor{xfigc135}{rgb}{0.678,0.678,0.678}
\definecolor{xfigc136}{rgb}{0.969,0.741,0.353}
\definecolor{xfigc137}{rgb}{0.388,0.420,0.612}
\definecolor{xfigc138}{rgb}{0.871,0.000,0.000}
\definecolor{xfigc139}{rgb}{0.678,0.678,0.678}
\definecolor{xfigc140}{rgb}{0.969,0.741,0.353}
\definecolor{xfigc141}{rgb}{0.678,0.678,0.678}
\definecolor{xfigc142}{rgb}{0.969,0.741,0.353}
\definecolor{xfigc143}{rgb}{0.388,0.420,0.612}
\definecolor{xfigc144}{rgb}{0.322,0.420,0.161}
\definecolor{xfigc145}{rgb}{0.580,0.580,0.580}
\definecolor{xfigc146}{rgb}{0.000,0.388,0.000}
\definecolor{xfigc147}{rgb}{0.000,0.388,0.290}
\definecolor{xfigc148}{rgb}{0.482,0.518,0.290}
\definecolor{xfigc149}{rgb}{0.906,0.741,0.482}
\definecolor{xfigc150}{rgb}{0.647,0.710,0.776}
\definecolor{xfigc151}{rgb}{0.420,0.420,0.580}
\definecolor{xfigc152}{rgb}{0.518,0.420,0.420}
\definecolor{xfigc153}{rgb}{0.322,0.612,0.290}
\definecolor{xfigc154}{rgb}{0.839,0.906,0.906}
\definecolor{xfigc155}{rgb}{0.322,0.388,0.388}
\definecolor{xfigc156}{rgb}{0.094,0.420,0.290}
\definecolor{xfigc157}{rgb}{0.612,0.647,0.710}
\definecolor{xfigc158}{rgb}{1.000,0.580,0.000}
\definecolor{xfigc159}{rgb}{1.000,0.580,0.000}
\definecolor{xfigc160}{rgb}{0.000,0.388,0.290}
\definecolor{xfigc161}{rgb}{0.482,0.518,0.290}
\definecolor{xfigc162}{rgb}{0.388,0.451,0.482}
\definecolor{xfigc163}{rgb}{0.906,0.741,0.482}
\definecolor{xfigc164}{rgb}{0.094,0.290,0.094}
\definecolor{xfigc165}{rgb}{0.969,0.741,0.353}
\definecolor{xfigc166}{rgb}{0.000,0.000,0.000}
\definecolor{xfigc167}{rgb}{0.969,0.220,0.161}
\definecolor{xfigc168}{rgb}{0.000,0.000,0.000}
\definecolor{xfigc169}{rgb}{1.000,1.000,0.322}
\definecolor{xfigc170}{rgb}{0.322,0.475,0.290}
\definecolor{xfigc171}{rgb}{0.388,0.604,0.353}
\definecolor{xfigc172}{rgb}{0.776,0.380,0.259}
\definecolor{xfigc173}{rgb}{0.906,0.412,0.259}
\definecolor{xfigc174}{rgb}{1.000,0.475,0.322}
\definecolor{xfigc175}{rgb}{0.871,0.871,0.871}
\definecolor{xfigc176}{rgb}{0.953,0.933,0.827}
\definecolor{xfigc177}{rgb}{0.961,0.682,0.365}
\definecolor{xfigc178}{rgb}{0.584,0.808,0.600}
\definecolor{xfigc179}{rgb}{0.710,0.082,0.490}
\definecolor{xfigc180}{rgb}{0.933,0.933,0.933}
\definecolor{xfigc181}{rgb}{0.518,0.518,0.518}
\definecolor{xfigc182}{rgb}{0.482,0.482,0.482}
\definecolor{xfigc183}{rgb}{0.000,0.353,0.000}
\definecolor{xfigc184}{rgb}{0.906,0.451,0.451}
\definecolor{xfigc185}{rgb}{1.000,0.796,0.192}
\definecolor{xfigc186}{rgb}{0.161,0.475,0.290}
\definecolor{xfigc187}{rgb}{0.871,0.157,0.129}
\definecolor{xfigc188}{rgb}{0.129,0.349,0.776}
\definecolor{xfigc189}{rgb}{0.973,0.973,0.973}
\definecolor{xfigc190}{rgb}{0.902,0.902,0.902}
\definecolor{xfigc191}{rgb}{0.129,0.518,0.353}
\definecolor{xfigc192}{rgb}{0.906,0.906,0.906}
\definecolor{xfigc193}{rgb}{0.443,0.459,0.443}
\definecolor{xfigc194}{rgb}{0.851,0.851,0.851}
\definecolor{xfigc195}{rgb}{0.337,0.620,0.690}
\definecolor{xfigc196}{rgb}{0.788,0.788,0.788}
\definecolor{xfigc197}{rgb}{0.875,0.847,0.875}
\definecolor{xfigc198}{rgb}{0.969,0.953,0.969}
\definecolor{xfigc199}{rgb}{0.800,0.800,0.800}
\clip(-677,-4287) rectangle (9697,-1563);
\tikzset{inner sep=+0pt, outer sep=+0pt}
\pgfsetlinewidth{+7.5\XFigu}
\pgfsetfillcolor{.!5}
\filldraw (1575,-1575)--(1576,-1575)--(1578,-1575)--(1583,-1576)--(1590,-1576)--(1600,-1577)
  --(1615,-1579)--(1633,-1580)--(1656,-1583)--(1685,-1585)--(1719,-1588)--(1758,-1592)
  --(1804,-1596)--(1855,-1601)--(1913,-1607)--(1977,-1613)--(2048,-1619)--(2124,-1626)
  --(2207,-1634)--(2295,-1642)--(2389,-1651)--(2489,-1661)--(2593,-1670)--(2702,-1681)
  --(2815,-1691)--(2932,-1703)--(3052,-1714)--(3175,-1726)--(3301,-1738)--(3428,-1750)
  --(3558,-1762)--(3688,-1775)--(3819,-1787)--(3951,-1800)--(4082,-1812)--(4213,-1825)
  --(4343,-1838)--(4472,-1850)--(4600,-1863)--(4727,-1875)--(4851,-1887)--(4974,-1899)
  --(5094,-1911)--(5212,-1923)--(5328,-1934)--(5441,-1946)--(5551,-1957)--(5659,-1968)
  --(5764,-1978)--(5866,-1989)--(5966,-1999)--(6063,-2009)--(6157,-2019)--(6248,-2028)
  --(6337,-2038)--(6423,-2047)--(6507,-2056)--(6588,-2065)--(6666,-2073)--(6742,-2082)
  --(6816,-2090)--(6887,-2098)--(6956,-2106)--(7023,-2114)--(7088,-2121)--(7151,-2129)
  --(7212,-2136)--(7271,-2144)--(7329,-2151)--(7384,-2158)--(7438,-2165)--(7491,-2172)
  --(7542,-2179)--(7592,-2186)--(7640,-2192)--(7687,-2199)--(7733,-2206)--(7778,-2213)
  --(7866,-2226)--(7950,-2239)--(8031,-2253)--(8107,-2266)--(8180,-2279)--(8250,-2293)
  --(8316,-2306)--(8379,-2320)--(8439,-2333)--(8496,-2347)--(8550,-2360)--(8601,-2374)
  --(8649,-2388)--(8693,-2401)--(8735,-2415)--(8774,-2429)--(8810,-2443)--(8843,-2456)
  --(8873,-2470)--(8901,-2483)--(8926,-2497)--(8949,-2510)--(8969,-2523)--(8986,-2536)
  --(9002,-2549)--(9015,-2562)--(9027,-2574)--(9037,-2586)--(9045,-2599)--(9052,-2610)
  --(9058,-2622)--(9062,-2634)--(9066,-2645)--(9069,-2656)--(9071,-2667)--(9073,-2678)
  --(9074,-2689)--(9075,-2700)--(9077,-2721)--(9079,-2742)--(9080,-2763)--(9082,-2785)
  --(9083,-2807)--(9084,-2830)--(9085,-2853)--(9085,-2877)--(9086,-2901)--(9086,-2925)
  --(9086,-2949)--(9085,-2973)--(9085,-2997)--(9084,-3020)--(9083,-3043)--(9082,-3065)
  --(9080,-3087)--(9079,-3108)--(9077,-3129)--(9075,-3150)--(9074,-3161)--(9073,-3172)
  --(9071,-3183)--(9069,-3194)--(9066,-3205)--(9062,-3216)--(9058,-3228)--(9052,-3240)
  --(9045,-3251)--(9037,-3264)--(9027,-3276)--(9015,-3288)--(9002,-3301)--(8986,-3314)
  --(8969,-3327)--(8949,-3340)--(8926,-3353)--(8901,-3367)--(8873,-3380)--(8843,-3394)
  --(8810,-3407)--(8774,-3421)--(8735,-3435)--(8693,-3449)--(8649,-3462)--(8601,-3476)
  --(8550,-3490)--(8496,-3503)--(8439,-3517)--(8379,-3530)--(8316,-3544)--(8250,-3557)
  --(8180,-3571)--(8107,-3584)--(8031,-3597)--(7950,-3611)--(7866,-3624)--(7778,-3638)
  --(7733,-3644)--(7687,-3651)--(7640,-3658)--(7592,-3664)--(7542,-3671)--(7491,-3678)
  --(7438,-3685)--(7384,-3692)--(7329,-3699)--(7271,-3706)--(7212,-3714)--(7151,-3721)
  --(7088,-3729)--(7023,-3736)--(6956,-3744)--(6887,-3752)--(6816,-3760)--(6742,-3768)
  --(6666,-3777)--(6588,-3785)--(6507,-3794)--(6423,-3803)--(6337,-3812)--(6248,-3822)
  --(6157,-3831)--(6063,-3841)--(5966,-3851)--(5866,-3861)--(5764,-3872)--(5659,-3882)
  --(5551,-3893)--(5441,-3904)--(5328,-3916)--(5212,-3927)--(5094,-3939)--(4974,-3951)
  --(4851,-3963)--(4727,-3975)--(4600,-3987)--(4472,-4000)--(4343,-4012)--(4213,-4025)
  --(4082,-4038)--(3951,-4050)--(3819,-4063)--(3688,-4075)--(3558,-4088)--(3428,-4100)
  --(3301,-4112)--(3175,-4124)--(3052,-4136)--(2932,-4147)--(2815,-4159)--(2702,-4169)
  --(2593,-4180)--(2489,-4189)--(2389,-4199)--(2295,-4208)--(2207,-4216)--(2124,-4224)
  --(2048,-4231)--(1977,-4237)--(1913,-4243)--(1855,-4249)--(1804,-4254)--(1758,-4258)
  --(1719,-4262)--(1685,-4265)--(1656,-4267)--(1633,-4270)--(1615,-4271)--(1600,-4273)
  --(1590,-4274)--(1583,-4274)--(1578,-4275)--(1576,-4275)--(1575,-4275);
\filldraw (1350,-1575)--(1348,-1575)--(1343,-1575)--(1335,-1575)--(1321,-1575)--(1302,-1575)
  --(1277,-1575)--(1245,-1576)--(1207,-1576)--(1163,-1576)--(1113,-1576)--(1058,-1577)
  --(999,-1577)--(937,-1578)--(873,-1578)--(807,-1579)--(741,-1580)--(676,-1581)
  --(611,-1581)--(548,-1582)--(486,-1583)--(427,-1584)--(371,-1585)--(317,-1587)
  --(266,-1588)--(218,-1589)--(173,-1590)--(130,-1592)--(89,-1593)--(51,-1595)
  --(15,-1597)--(-18,-1599)--(-50,-1601)--(-81,-1603)--(-109,-1605)--(-137,-1607)
  --(-163,-1610)--(-188,-1613)--(-216,-1616)--(-243,-1619)--(-269,-1623)--(-294,-1628)
  --(-318,-1633)--(-341,-1638)--(-363,-1644)--(-384,-1651)--(-404,-1659)--(-424,-1667)
  --(-442,-1677)--(-460,-1687)--(-477,-1699)--(-492,-1711)--(-507,-1725)--(-521,-1739)
  --(-534,-1755)--(-546,-1773)--(-557,-1791)--(-567,-1810)--(-577,-1831)--(-585,-1853)
  --(-593,-1876)--(-600,-1900)--(-607,-1925)--(-612,-1952)--(-618,-1979)--(-622,-2008)
  --(-627,-2038)--(-631,-2070)--(-634,-2103)--(-638,-2138)--(-640,-2168)--(-643,-2200)
  --(-645,-2233)--(-647,-2267)--(-649,-2303)--(-651,-2340)--(-653,-2379)--(-655,-2419)
  --(-657,-2460)--(-658,-2503)--(-659,-2546)--(-661,-2591)--(-662,-2637)--(-663,-2684)
  --(-663,-2731)--(-664,-2779)--(-664,-2827)--(-665,-2876)--(-665,-2925)--(-665,-2974)
  --(-664,-3023)--(-664,-3071)--(-663,-3119)--(-663,-3166)--(-662,-3213)--(-661,-3259)
  --(-659,-3304)--(-658,-3347)--(-657,-3390)--(-655,-3431)--(-653,-3471)--(-651,-3510)
  --(-649,-3547)--(-647,-3583)--(-645,-3617)--(-643,-3650)--(-640,-3682)--(-638,-3713)
  --(-634,-3747)--(-631,-3780)--(-627,-3812)--(-622,-3842)--(-618,-3871)--(-612,-3898)
  --(-607,-3925)--(-600,-3950)--(-593,-3974)--(-585,-3997)--(-577,-4019)--(-567,-4040)
  --(-557,-4059)--(-546,-4077)--(-534,-4095)--(-521,-4111)--(-507,-4125)--(-492,-4139)
  --(-477,-4151)--(-460,-4163)--(-442,-4173)--(-424,-4183)--(-404,-4191)--(-384,-4199)
  --(-363,-4206)--(-341,-4212)--(-318,-4217)--(-294,-4222)--(-269,-4227)--(-243,-4231)
  --(-216,-4234)--(-188,-4238)--(-163,-4240)--(-137,-4243)--(-109,-4245)--(-81,-4247)
  --(-50,-4249)--(-18,-4251)--(15,-4253)--(51,-4255)--(89,-4257)--(130,-4258)
  --(173,-4260)--(218,-4261)--(266,-4262)--(317,-4263)--(371,-4265)--(427,-4266)
  --(486,-4267)--(548,-4268)--(611,-4269)--(676,-4269)--(741,-4270)--(807,-4271)
  --(873,-4272)--(937,-4272)--(999,-4273)--(1058,-4273)--(1113,-4274)--(1163,-4274)
  --(1207,-4274)--(1245,-4274)--(1277,-4275)--(1302,-4275)--(1321,-4275)--(1335,-4275)
  --(1343,-4275)--(1348,-4275)--(1350,-4275);
\pgfsetfillcolor{.!10}
\filldraw (1575,-1800)--(1576,-1800)--(1579,-1800)--(1584,-1801)--(1592,-1801)--(1603,-1802)
  --(1619,-1803)--(1640,-1805)--(1666,-1807)--(1697,-1809)--(1734,-1812)--(1777,-1816)
  --(1827,-1819)--(1883,-1824)--(1945,-1829)--(2013,-1834)--(2088,-1840)--(2169,-1846)
  --(2255,-1853)--(2347,-1860)--(2443,-1867)--(2544,-1875)--(2649,-1884)--(2758,-1892)
  --(2870,-1901)--(2984,-1910)--(3101,-1919)--(3219,-1929)--(3338,-1938)--(3457,-1948)
  --(3577,-1958)--(3697,-1967)--(3816,-1977)--(3934,-1987)--(4051,-1996)--(4166,-2006)
  --(4279,-2015)--(4390,-2024)--(4499,-2033)--(4606,-2042)--(4710,-2051)--(4811,-2060)
  --(4910,-2068)--(5006,-2077)--(5099,-2085)--(5189,-2093)--(5277,-2101)--(5362,-2108)
  --(5444,-2116)--(5523,-2123)--(5600,-2131)--(5674,-2138)--(5745,-2145)--(5814,-2151)
  --(5881,-2158)--(5946,-2165)--(6008,-2171)--(6068,-2178)--(6126,-2184)--(6182,-2190)
  --(6236,-2196)--(6288,-2202)--(6339,-2208)--(6388,-2214)--(6436,-2220)--(6482,-2226)
  --(6526,-2232)--(6570,-2238)--(6612,-2244)--(6653,-2250)--(6725,-2261)--(6794,-2272)
  --(6860,-2283)--(6923,-2294)--(6983,-2305)--(7040,-2316)--(7095,-2328)--(7147,-2339)
  --(7196,-2351)--(7243,-2363)--(7287,-2375)--(7329,-2387)--(7369,-2399)--(7406,-2412)
  --(7440,-2424)--(7472,-2437)--(7502,-2450)--(7530,-2462)--(7555,-2475)--(7578,-2488)
  --(7598,-2500)--(7617,-2513)--(7634,-2526)--(7649,-2538)--(7662,-2551)--(7673,-2563)
  --(7683,-2575)--(7691,-2587)--(7699,-2599)--(7704,-2611)--(7709,-2622)--(7713,-2634)
  --(7716,-2645)--(7719,-2656)--(7721,-2667)--(7723,-2678)--(7724,-2689)--(7725,-2700)
  --(7727,-2721)--(7729,-2742)--(7730,-2763)--(7732,-2785)--(7733,-2807)--(7734,-2830)
  --(7735,-2853)--(7735,-2877)--(7736,-2901)--(7736,-2925)--(7736,-2949)--(7735,-2973)
  --(7735,-2997)--(7734,-3020)--(7733,-3043)--(7732,-3065)--(7730,-3087)--(7729,-3108)
  --(7727,-3129)--(7725,-3150)--(7724,-3161)--(7723,-3172)--(7721,-3183)--(7719,-3194)
  --(7716,-3205)--(7713,-3216)--(7709,-3228)--(7704,-3239)--(7699,-3251)--(7691,-3263)
  --(7683,-3275)--(7673,-3287)--(7662,-3299)--(7649,-3312)--(7634,-3324)--(7617,-3337)
  --(7598,-3350)--(7578,-3362)--(7555,-3375)--(7530,-3388)--(7502,-3400)--(7472,-3413)
  --(7440,-3426)--(7406,-3438)--(7369,-3451)--(7329,-3463)--(7287,-3475)--(7243,-3487)
  --(7196,-3499)--(7147,-3511)--(7095,-3522)--(7040,-3534)--(6983,-3545)--(6923,-3556)
  --(6860,-3567)--(6794,-3578)--(6725,-3589)--(6653,-3600)--(6612,-3606)--(6570,-3612)
  --(6526,-3618)--(6482,-3624)--(6436,-3630)--(6388,-3636)--(6339,-3642)--(6288,-3648)
  --(6236,-3654)--(6182,-3660)--(6126,-3666)--(6068,-3672)--(6008,-3679)--(5946,-3685)
  --(5881,-3692)--(5814,-3699)--(5745,-3705)--(5674,-3712)--(5600,-3719)--(5523,-3727)
  --(5444,-3734)--(5362,-3742)--(5277,-3749)--(5189,-3757)--(5099,-3765)--(5006,-3773)
  --(4910,-3782)--(4811,-3790)--(4710,-3799)--(4606,-3808)--(4499,-3817)--(4390,-3826)
  --(4279,-3835)--(4166,-3844)--(4051,-3854)--(3934,-3863)--(3816,-3873)--(3697,-3883)
  --(3577,-3892)--(3457,-3902)--(3338,-3912)--(3219,-3921)--(3101,-3931)--(2984,-3940)
  --(2870,-3949)--(2758,-3958)--(2649,-3966)--(2544,-3975)--(2443,-3983)--(2347,-3990)
  --(2255,-3997)--(2169,-4004)--(2088,-4010)--(2013,-4016)--(1945,-4021)--(1883,-4026)
  --(1827,-4031)--(1777,-4034)--(1734,-4038)--(1697,-4041)--(1666,-4043)--(1640,-4045)
  --(1619,-4047)--(1603,-4048)--(1592,-4049)--(1584,-4049)--(1579,-4050)--(1576,-4050)
  --(1575,-4050);
\filldraw (1350,-1800)--(1347,-1800)--(1342,-1800)--(1331,-1800)--(1315,-1800)--(1293,-1800)
  --(1264,-1801)--(1228,-1801)--(1188,-1802)--(1142,-1802)--(1094,-1803)--(1042,-1804)
  --(990,-1804)--(937,-1805)--(886,-1806)--(835,-1808)--(787,-1809)--(741,-1810)
  --(698,-1812)--(658,-1813)--(620,-1815)--(585,-1817)--(552,-1819)--(521,-1821)
  --(493,-1823)--(467,-1826)--(442,-1828)--(418,-1831)--(396,-1834)--(375,-1838)
  --(353,-1841)--(333,-1845)--(313,-1850)--(294,-1855)--(275,-1861)--(257,-1868)
  --(240,-1875)--(223,-1883)--(208,-1892)--(192,-1902)--(178,-1913)--(164,-1926)
  --(151,-1939)--(138,-1954)--(127,-1970)--(116,-1987)--(106,-2006)--(97,-2025)
  --(88,-2046)--(80,-2068)--(73,-2091)--(66,-2115)--(60,-2141)--(55,-2167)--(50,-2195)
  --(45,-2225)--(41,-2255)--(38,-2288)--(35,-2315)--(32,-2345)--(29,-2375)--(27,-2407)
  --(24,-2440)--(22,-2475)--(20,-2511)--(18,-2548)--(17,-2586)--(15,-2626)--(14,-2666)
  --(13,-2708)--(12,-2750)--(11,-2793)--(11,-2837)--(10,-2881)--(10,-2925)--(10,-2969)
  --(11,-3013)--(11,-3057)--(12,-3100)--(13,-3142)--(14,-3184)--(15,-3224)--(17,-3264)
  --(18,-3302)--(20,-3339)--(22,-3375)--(24,-3410)--(27,-3443)--(29,-3475)--(32,-3505)
  --(35,-3535)--(38,-3563)--(41,-3595)--(45,-3625)--(50,-3655)--(55,-3683)--(60,-3709)
  --(66,-3735)--(73,-3759)--(80,-3782)--(88,-3804)--(97,-3825)--(106,-3844)--(116,-3863)
  --(127,-3880)--(138,-3896)--(151,-3911)--(164,-3924)--(178,-3937)--(192,-3948)
  --(208,-3958)--(223,-3967)--(240,-3975)--(257,-3982)--(275,-3989)--(294,-3995)
  --(313,-4000)--(333,-4005)--(353,-4009)--(375,-4013)--(396,-4016)--(418,-4019)
  --(442,-4022)--(467,-4024)--(493,-4027)--(521,-4029)--(552,-4031)--(585,-4033)
  --(620,-4035)--(658,-4037)--(698,-4038)--(741,-4040)--(787,-4041)--(835,-4042)
  --(886,-4044)--(937,-4045)--(990,-4046)--(1042,-4046)--(1094,-4047)--(1142,-4048)
  --(1188,-4048)--(1228,-4049)--(1264,-4049)--(1293,-4050)--(1315,-4050)--(1331,-4050)
  --(1342,-4050)--(1347,-4050)--(1350,-4050);
\pgfsetfillcolor{.!15}
\filldraw (1575,-2025)--(1577,-2025)--(1581,-2026)--(1589,-2026)--(1601,-2028)--(1619,-2029)
  --(1642,-2032)--(1672,-2035)--(1708,-2038)--(1751,-2042)--(1800,-2047)--(1856,-2053)
  --(1916,-2059)--(1982,-2066)--(2051,-2073)--(2124,-2080)--(2200,-2088)--(2276,-2096)
  --(2354,-2104)--(2432,-2113)--(2509,-2121)--(2585,-2130)--(2660,-2138)--(2732,-2146)
  --(2802,-2154)--(2870,-2162)--(2934,-2170)--(2996,-2177)--(3055,-2185)--(3112,-2192)
  --(3165,-2199)--(3216,-2206)--(3264,-2213)--(3309,-2220)--(3352,-2227)--(3393,-2233)
  --(3431,-2240)--(3468,-2247)--(3503,-2253)--(3536,-2260)--(3567,-2267)--(3597,-2274)
  --(3625,-2281)--(3653,-2288)--(3693,-2299)--(3731,-2310)--(3767,-2322)--(3801,-2334)
  --(3833,-2347)--(3863,-2360)--(3891,-2374)--(3918,-2388)--(3942,-2402)--(3965,-2417)
  --(3986,-2433)--(4005,-2448)--(4022,-2464)--(4038,-2480)--(4052,-2496)--(4064,-2513)
  --(4075,-2529)--(4085,-2545)--(4093,-2562)--(4099,-2578)--(4105,-2594)--(4110,-2609)
  --(4114,-2625)--(4117,-2640)--(4120,-2655)--(4122,-2670)--(4123,-2685)--(4125,-2700)
  --(4127,-2721)--(4129,-2742)--(4130,-2763)--(4132,-2785)--(4133,-2807)--(4134,-2830)
  --(4135,-2853)--(4135,-2877)--(4136,-2901)--(4136,-2925)--(4136,-2949)--(4135,-2973)
  --(4135,-2997)--(4134,-3020)--(4133,-3043)--(4132,-3065)--(4130,-3087)--(4129,-3108)
  --(4127,-3129)--(4125,-3150)--(4123,-3165)--(4122,-3180)--(4120,-3195)--(4117,-3210)
  --(4114,-3225)--(4110,-3241)--(4105,-3256)--(4099,-3272)--(4093,-3288)--(4085,-3305)
  --(4075,-3321)--(4064,-3337)--(4052,-3354)--(4038,-3370)--(4022,-3386)--(4005,-3402)
  --(3986,-3417)--(3965,-3433)--(3942,-3448)--(3918,-3462)--(3891,-3476)--(3863,-3490)
  --(3833,-3503)--(3801,-3516)--(3767,-3528)--(3731,-3540)--(3693,-3551)--(3653,-3563)
  --(3625,-3569)--(3597,-3576)--(3567,-3583)--(3536,-3590)--(3503,-3597)--(3468,-3603)
  --(3431,-3610)--(3393,-3617)--(3352,-3623)--(3309,-3630)--(3264,-3637)--(3216,-3644)
  --(3165,-3651)--(3112,-3658)--(3055,-3665)--(2996,-3673)--(2934,-3680)--(2870,-3688)
  --(2802,-3696)--(2732,-3704)--(2660,-3712)--(2585,-3720)--(2509,-3729)--(2432,-3737)
  --(2354,-3746)--(2276,-3754)--(2200,-3762)--(2124,-3770)--(2051,-3777)--(1982,-3784)
  --(1916,-3791)--(1856,-3797)--(1800,-3803)--(1751,-3808)--(1708,-3812)--(1672,-3815)
  --(1642,-3818)--(1619,-3821)--(1601,-3822)--(1589,-3824)--(1581,-3824)--(1577,-3825)
  --(1575,-3825);
\filldraw (1350,-2025)--(1347,-2025)--(1339,-2025)--(1327,-2025)--(1308,-2026)--(1284,-2027)
  --(1256,-2027)--(1225,-2029)--(1192,-2030)--(1160,-2031)--(1129,-2033)--(1100,-2035)
  --(1074,-2037)--(1049,-2040)--(1027,-2043)--(1006,-2046)--(987,-2049)--(970,-2053)
  --(953,-2058)--(938,-2063)--(924,-2067)--(910,-2073)--(897,-2079)--(884,-2086)
  --(871,-2094)--(858,-2103)--(845,-2114)--(833,-2125)--(821,-2138)--(809,-2153)
  --(798,-2169)--(787,-2186)--(777,-2205)--(767,-2225)--(758,-2247)--(750,-2270)
  --(742,-2294)--(735,-2320)--(729,-2347)--(723,-2376)--(717,-2406)--(713,-2438)
  --(709,-2462)--(706,-2488)--(703,-2515)--(700,-2543)--(698,-2573)--(696,-2604)
  --(694,-2636)--(692,-2669)--(690,-2703)--(689,-2739)--(687,-2775)--(687,-2812)
  --(686,-2849)--(686,-2887)--(685,-2925)--(686,-2963)--(686,-3001)--(687,-3038)
  --(687,-3075)--(689,-3111)--(690,-3147)--(692,-3181)--(694,-3214)--(696,-3246)
  --(698,-3277)--(700,-3307)--(703,-3335)--(706,-3362)--(709,-3388)--(713,-3413)
  --(717,-3444)--(723,-3474)--(729,-3503)--(735,-3530)--(742,-3556)--(750,-3580)
  --(758,-3603)--(767,-3625)--(777,-3645)--(787,-3664)--(798,-3681)--(809,-3697)
  --(821,-3712)--(833,-3725)--(845,-3736)--(858,-3747)--(871,-3756)--(884,-3764)
  --(897,-3771)--(910,-3777)--(924,-3783)--(938,-3788)--(953,-3792)--(970,-3797)
  --(987,-3801)--(1006,-3804)--(1027,-3807)--(1049,-3810)--(1074,-3813)--(1100,-3815)
  --(1129,-3817)--(1160,-3819)--(1192,-3820)--(1225,-3821)--(1256,-3823)--(1284,-3823)
  --(1308,-3824)--(1327,-3825)--(1339,-3825)--(1347,-3825)--(1350,-3825);
\pgfsetlinewidth{+15\XFigu}
\draw (1800,-2925)--(9675,-2925);
\draw (2250,-2835)--(9000,-2835);
\draw (2475,-2745)--(4050,-2745);
\draw (2700,-2655)--(3825,-2655);
\draw (4500,-2745)--(6075,-2745);
\draw (4950,-2655)--(6750,-2655);
\draw (6525,-2745)--(7650,-2745);
\draw (5400,-2565)--(7425,-2565);
\draw (8100,-2745)--(8550,-2745);
\pgfsetlinewidth{+45\XFigu}
\pgfsetdash{{+150\XFigu}{+75\XFigu}{+15\XFigu}{+75\XFigu}}{+0pt}
\draw (4275,-2610)--(4275,-3375);
\draw (2025,-2610)--(2025,-3330);
\draw (7875,-2610)--(7875,-3375);
\draw (9225,-2610)--(9225,-3375);
\pgfsetfillcolor{.}
\pgftext[base,left,at=\pgfqpointxy{1800}{-2475}] {\fontsize{7}{8.4}\usefont{T1}{ptm}{m}{n}$B_{i-2}$};
\pgftext[base,left,at=\pgfqpointxy{4050}{-2475}] {\fontsize{7}{8.4}\usefont{T1}{ptm}{m}{n}$B_{i-1}$};
\pgftext[base,left,at=\pgfqpointxy{7650}{-2475}] {\fontsize{7}{8.4}\usefont{T1}{ptm}{m}{n}$B_{i}$};
\pgftext[base,left,at=\pgfqpointxy{9000}{-2475}] {\fontsize{7}{8.4}\usefont{T1}{ptm}{m}{n}$B_{i+1}$};
\pgftext[base,left,at=\pgfqpointxy{2925}{-3240}] {\fontsize{10}{12}\usefont{T1}{ptm}{m}{n}$\partial V_{i-1}$};
\pgftext[base,left,at=\pgfqpointxy{5310}{-3240}] {\fontsize{10}{12}\usefont{T1}{ptm}{m}{n}$\partial V_{i}$};
\pgftext[base,left,at=\pgfqpointxy{8100}{-3285}] {\fontsize{10}{12}\usefont{T1}{ptm}{m}{n}$\partial V_{i+1}$};
\pgftext[base,left,at=\pgfqpointxy{810}{-2700}] {\fontsize{8}{9.6}\usefont{T1}{ptm}{m}{n}$V_{i-1}$};
\pgftext[base,left,at=\pgfqpointxy{180}{-2970}] {\fontsize{8}{9.6}\usefont{T1}{ptm}{m}{n}$V_{i}$};
\pgftext[base,left,at=\pgfqpointxy{-540}{-3240}] {\fontsize{8}{9.6}\usefont{T1}{ptm}{m}{n}$V_{i+1}$};
\endtikzpicture%
\caption{Sets $B_i$, $V_i$ and $\partial V_i$ on the interval
  representation of $G$ (for instance, $B_i$ contains all the
  intervals crossing the corresponding dotted line, $\partial V_i$
  contains all the intervals intersecting the zone between $B_{i-1}$
  and $B_i$, and $V_i$ contains all the intervals intersecting the
  corresponding zone).}\label{fig:int-rep}
\end{center}  
\end{figure}

\begin{lemma}[Bessy et al.~\cite{beehpera}]\label{lem:oldclaim2}
For every $i\in [k]$, the graph $\partial G_i$ is either a clique of
order at most $t$ or a $t$-connected graph.
\end{lemma}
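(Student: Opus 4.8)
The plan is to apply Lemma~\ref{lem:oldclaim1} to the interval graph $\partial G_i$ itself, using the interval representation it inherits from $G$, and to locate its minimal vertex cuts by analysing the coverage function. Writing $a_u$ and $b_u$ for the left and right endpoints of $I(u)$, I would first record the elementary fact that $V_j=\{u\in V(G):a_u\le x_j\}$ for every $j$, so that a vertex of $\partial V_i=(V_i\setminus V_{i-1})\cup B_{i-1}$ either lies in $B_{i-1}=C_{j_{i-1}}$ (whence $a_u\le x_{j_{i-1}}$ and $b_u\ge x_{j_{i-1}+1}$) or in $V_i\setminus V_{i-1}$ (whence $x_{j_{i-1}}<a_u\le x_{j_i}$). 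For a column index $\ell$, let $d_\ell$ denote the number of vertices of $\partial V_i$ whose interval contains $[x_\ell,x_{\ell+1}]$; this is exactly the coverage function of the inherited interval representation of $\partial G_i$.

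The heart of the argument is to pin down $d_\ell$ on three ranges. First I would show that $C_\ell\subseteq \partial V_i$, and hence $d_\ell=c_\ell$, for every $\ell$ with $j_{i-1}\le \ell\le j_i$: any $u\in C_\ell$ satisfies $a_u\le x_\ell\le x_{j_i}$, so $u\in V_i$, and according as $a_u\le x_{j_{i-1}}$ or not, $u$ falls into $B_{i-1}$ (using $b_u\ge x_{\ell+1}\ge x_{j_{i-1}+1}$) or into $V_i\setminus V_{i-1}$. Next, since every interval of $\partial V_i$ starts at or before $x_{j_i}$, no interval begins to the right of $x_{j_i}$, so $d_\ell$ is non-increasing for $\ell\ge j_i$. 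Symmetrically, every vertex of $\partial V_i$ that covers a column to the left of $x_{j_{i-1}}$ must lie in $B_{i-1}$, so there $d_\ell$ merely counts the members of $B_{i-1}$ that have already started and is non-decreasing for $\ell\le j_{i-1}$. Consequently every strict local minimum of $d$ in the interior lies strictly between $j_{i-1}$ and $j_i$, where $d$ agrees with $c$.

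By Lemma~\ref{lem:oldclaim1} applied to $\partial G_i$, every minimal vertex cut of $\partial G_i$ is such a strict local minimum column, so it equals $C_\ell$ for some $\ell$ with $j_{i-1}<\ell<j_i$ and $c_\ell<\min\{c_{\ell-1},c_{\ell+1}\}$. Now the definition of the indices $j_1<\dots<j_{k-1}$ as \emph{all} indices in $[2n-1]\setminus\{1\}$ satisfying $c_\ell<\min\{c_{\ell-1},c_{\ell+1},t\}$ is decisive: since $\ell$ lies strictly between two consecutive such indices it is not among them, yet it does satisfy $c_\ell<\min\{c_{\ell-1},c_{\ell+1}\}$, so necessarily $c_\ell\ge t$. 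Hence every minimal vertex cut of $\partial G_i$ has size at least $t$, that is, $\partial G_i$ has no vertex cut of size less than $t$. To finish I would split into cases: if $\partial G_i$ is complete, it is a clique, of order at most $t$ (the first alternative) or of order at least $t+1$ (hence $t$-connected); and if $\partial G_i$ is not complete, it possesses a vertex cut, every minimal cut has size at least $t$, so its connectivity is at least $t$ and it is $t$-connected. The degenerate cases $i=1$ (where $\partial V_1=V_1$ and the left boundary is simply the start of the representation) and $i=k$ (where $j_k=2n-1$ is the final column) are handled by the very same coverage analysis with the boundary ranges shortened.

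The main obstacle I anticipate is the bookkeeping that identifies $d_\ell$ with $c_\ell$ on $[j_{i-1},j_i]$ together with the monotonicity of $d$ outside this window, since it is precisely this localisation that forbids a cut of $\partial G_i$ from occurring at the thin boundary columns $B_{i-1}$ or $B_i$. Once it is in place, the value bound $c_\ell\ge t$ at interior local minima follows purely from the defining property of the $j$'s, and the connectivity conclusion is routine.
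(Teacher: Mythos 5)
Your argument is correct, but there is nothing in this paper to compare it against: Lemma~\ref{lem:oldclaim2} (like Lemma~\ref{lem:oldclaim1}) is imported from Bessy et al.~\cite{beehpera} and stated here without proof. What you have supplied is a self-contained reconstruction along the lines one would expect in that reference: apply Lemma~\ref{lem:oldclaim1} to $\partial G_i$ with its inherited representation, show $C_\ell\subseteq\partial V_i$ (hence $d_\ell=c_\ell$) for $j_{i-1}\le\ell\le j_i$, show $d$ is monotone outside this window, and then invoke the fact that $j_1<\dots<j_{k-1}$ exhaust \emph{all} indices with $c_\ell<\min\{c_{\ell-1},c_{\ell+1},t\}$ to force $c_\ell\ge t$ at any strict local minimum strictly between consecutive $j$'s; the concluding complete/non-complete case split is sound. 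One point deserves to be made explicit rather than elided: the columns of the inherited representation of $\partial G_i$ are coarser than the original columns, because endpoints of intervals outside $\partial V_i$ vanish, so $d$ read on the original columns may have plateaus, and a strict local minimum of the coverage function of $\partial G_i$ corresponds a priori to a strict local minimum \emph{plateau} of $d$. Your two monotonicity statements close this gap: such a plateau must lie entirely in $[j_{i-1}+1,j_i-1]$, where $d=c$ and $|c_{\ell+1}-c_\ell|=1$ forbids plateaus, so it is a single column. The same observation also rules out $\partial G_i$ being disconnected (an empty minimal cut would have to sit at such a minimum of value at least $t$), which your final case analysis tacitly assumes.
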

\begin{proof}[Proof of Theorem~\ref{thm:piinterval}]
We use the same notation as above.
Our approach is dynamic programming on the 
sequence $G_1\subseteq G_2\subseteq \ldots \subseteq G_k$.
Similarly as for Theorem~\ref{thm:pitw}, 
we introduce the notion of a {\it local cascade}.
More precisely, for every subgraph $G_i$, 
we consider 
\begin{enumerate}[(i)]
\item all possible candidates for the restriction $\sigma_i$ 
of an optimal partial incentive to $B_i$, 
\item all possible linear orders, in which the elements of $B_i$ enter the hull, and 
\item all possible amounts of help that each vertex in $B_i$ 
receives from outside of $G_i$ when it enters the hull.
\end{enumerate}
Formally, 
a {\it local cascade} for $G_i$ is a triple $(\sigma_i,\prec_i,\rho_i)$, where
\begin{enumerate}[(i)]
\item $\sigma_i:B_i\to [t]_0$,
\item $\prec_i$ is a linear order on $B_i$ with $\{u\in B_i:\tau (u)-\sigma(u)\leq 0\}\prec_i\{v\in B_i:\tau (v)-\sigma(v)> 0\}$,
\item $\rho_i:B_i\to [n-1]_0$. 
\end{enumerate}
Since $|B_i|<t$, there are $O\left((t+1)^{t-1}(t-1)!n^{t-1}\right)=n^{O(t)}$ local cascades for $G_i$.

For a local cascade $(\sigma_i,\prec_i,\rho_i)$ for $G_i$, let $\pi_i(\sigma_i,\prec_i,\rho_i)$ be a function $\sigma\in [t]_0^{V(G_i)}$ with minimum cost such that $\sigma|_{B_i}=\sigma_i$,
\begin{enumerate}
\item[(iv)] $\sum\limits_{v\in \partial V_j} \sigma(v)\leq {t+1\choose 2}$ for every $j\in[i]$, and
\item[(v)] there is a linear extension $\prec$ of $\prec_i$ to $G_i$ such that for every $u\in V(G_i)$
\begin{itemize}
\item either $u\in V(G_i)\setminus B_i$ and $|N_{G_i}^\prec(u)|\geq\tau(u)-\sigma(u)$,
\item or $u\in B_i$ and $|N_{G_i}^\prec(u)|\geq\tau(u)-\sigma(u)-\rho_i(u)$,
\end{itemize}
\end{enumerate}
where $N_{G_i}^\prec(u)$ denotes the neighbors of $u$ in $G_i$ 
that appear before $u$ in the linear order $\prec$.
If no such function $\sigma$ exists, 
then we set all the values of $\pi_i(\sigma_i,\prec_i,\rho_i)$ to $\infty$.
Otherwise, it is clear from the above definitions 
that $\pi_i(\sigma_i,\prec_i,\rho_i)$ 
is a partial incentive of $(G_i,\tau)$ 
which is optimal with respect to the conditions 
imposed by $(\sigma_i,\prec_i,\rho_i)$.

Note that $B_k$ consists of a single vertex, 
which does not have any neighbors outside of $G_k=G$.
Therefore, using Lemma \ref{lem:tconnected}(ii) and Lemma \ref{lem:oldclaim2},
the following claim follow immediately from the definitions.

\begin{claim}\label{claim3.1}
The function $\pi_k(\sigma_k,\emptyset,0)$ 
that minimizes the cost among all local cascades $(\sigma_k,\emptyset,0)$ for $G_k$ 
is an optimal partial incentive of $(G,\tau)$. 
\end{claim}
The next two claims show how 
$\pi_i(\sigma_i,\prec_i,\rho_i)$ can be computed recursively.
\begin{claim}\label{claim3.2}
For every local cascade $(\sigma_1,\prec_1,\rho_1)$ for $G_1$, 
$\pi_1(\sigma_1,\prec_1,\rho_1)$ can be computed in time $n^{O(t^2)}$.
\end{claim}
\begin{proof}[Proof of Claim~\ref{claim3.2}]
Since $V_1=\partial V_1$, 
there are $O(n^{t(t+1)/2})$ 
candidates for a function $\sigma\in [t]_0^{V(G_1)}$
satisfying condition (iv).
Hence, we can compute a function 
$\pi_1(\sigma_1,\prec_1,\rho_1)$
by brute-force in time $n^{O(t^2)}$.
\end{proof}

\begin{claim}\label{claim3.3}
For every $i\in [k]\setminus \{ 1\}$, 
and every local cascade $(\sigma_i,\prec_i,\rho_i)$ for $G_i$,
given $\pi_{i-1}(\sigma_{i-1},\prec_{i-1}, \rho_{i-1})$
for all local cascades $(\sigma_{i-1},\prec_{i-1},\rho_{i-1})$ for $G_{i-1}$,
$\pi_i(\sigma_i,\prec_i,\rho_i)$
can be computed in time $n^{O(t^2)}$.
\end{claim}
\begin{proof}[Proof of Claim~\ref{claim3.3}]
By definition, we have $B_i\cap V_{i-1}\subseteq B_{i-1}$. Therefore,
the two sets $B_{i-1}'=B_i\cap V_{i-1}$ and
$B''_{i-1}=B_{i-1}\setminus B'_{i-1}$ partition the set $B_{i-1}$.
Note that $B_{i-1}'=B_i\cap B_{i-1}$,
and $B''_{i-1}=B_{i-1}\setminus B_i$,
cf. Figure~\ref{fig1}.

\begin{figure}[hbt]
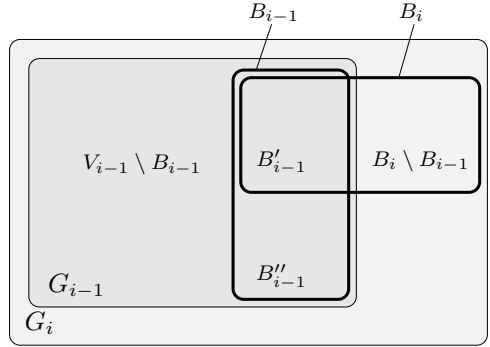

\begin{center}

\ifx\XFigwidth\undefined\dimen1=0pt\else\dimen1\XFigwidth\fi
\divide\dimen1 by 5604
\ifx\XFigheight\undefined\dimen3=0pt\else\dimen3\XFigheight\fi
\divide\dimen3 by 4041
\ifdim\dimen1=0pt\ifdim\dimen3=0pt\dimen1=2100sp\dimen3\dimen1
\else\dimen1\dimen3\fi\else\ifdim\dimen3=0pt\dimen3\dimen1\fi\fi
\tikzpicture[x=+\dimen1, y=+\dimen3]
{\ifx\XFigu\undefined\catcode`\@11
\def\temp{\alloc@1\dimen\dimendef\insc@unt}\temp\XFigu\catcode`\@12\fi}
\XFigu2486sp
\ifdim\XFigu<0pt\XFigu-\XFigu\fi
\definecolor{xfigc32}{rgb}{0.612,0.000,0.000}
\definecolor{xfigc33}{rgb}{0.549,0.549,0.549}
\definecolor{xfigc34}{rgb}{0.549,0.549,0.549}
\definecolor{xfigc35}{rgb}{0.259,0.259,0.259}
\definecolor{xfigc36}{rgb}{0.549,0.549,0.549}
\definecolor{xfigc37}{rgb}{0.259,0.259,0.259}
\definecolor{xfigc38}{rgb}{0.549,0.549,0.549}
\definecolor{xfigc39}{rgb}{0.259,0.259,0.259}
\definecolor{xfigc40}{rgb}{0.549,0.549,0.549}
\definecolor{xfigc41}{rgb}{0.259,0.259,0.259}
\definecolor{xfigc42}{rgb}{0.549,0.549,0.549}
\definecolor{xfigc43}{rgb}{0.259,0.259,0.259}
\definecolor{xfigc44}{rgb}{0.557,0.557,0.557}
\definecolor{xfigc45}{rgb}{0.761,0.761,0.761}
\definecolor{xfigc46}{rgb}{0.431,0.431,0.431}
\definecolor{xfigc47}{rgb}{0.267,0.267,0.267}
\definecolor{xfigc48}{rgb}{0.557,0.561,0.557}
\definecolor{xfigc49}{rgb}{0.443,0.443,0.443}
\definecolor{xfigc50}{rgb}{0.682,0.682,0.682}
\definecolor{xfigc51}{rgb}{0.200,0.200,0.200}
\definecolor{xfigc52}{rgb}{0.580,0.576,0.584}
\definecolor{xfigc53}{rgb}{0.455,0.439,0.459}
\definecolor{xfigc54}{rgb}{0.333,0.333,0.333}
\definecolor{xfigc55}{rgb}{0.702,0.702,0.702}
\definecolor{xfigc56}{rgb}{0.765,0.765,0.765}
\definecolor{xfigc57}{rgb}{0.427,0.427,0.427}
\definecolor{xfigc58}{rgb}{0.271,0.271,0.271}
\definecolor{xfigc59}{rgb}{0.886,0.886,0.933}
\definecolor{xfigc60}{rgb}{0.580,0.580,0.604}
\definecolor{xfigc61}{rgb}{0.859,0.859,0.859}
\definecolor{xfigc62}{rgb}{0.631,0.631,0.718}
\definecolor{xfigc63}{rgb}{0.929,0.929,0.929}
\definecolor{xfigc64}{rgb}{0.878,0.878,0.878}
\definecolor{xfigc65}{rgb}{0.525,0.675,1.000}
\definecolor{xfigc66}{rgb}{0.439,0.439,1.000}
\definecolor{xfigc67}{rgb}{0.776,0.718,0.592}
\definecolor{xfigc68}{rgb}{0.937,0.973,1.000}
\definecolor{xfigc69}{rgb}{0.863,0.796,0.651}
\definecolor{xfigc70}{rgb}{0.251,0.251,0.251}
\definecolor{xfigc71}{rgb}{0.502,0.502,0.502}
\definecolor{xfigc72}{rgb}{0.753,0.753,0.753}
\definecolor{xfigc73}{rgb}{0.667,0.667,0.667}
\definecolor{xfigc74}{rgb}{0.780,0.765,0.780}
\definecolor{xfigc75}{rgb}{0.337,0.318,0.318}
\definecolor{xfigc76}{rgb}{0.843,0.843,0.843}
\definecolor{xfigc77}{rgb}{0.522,0.502,0.490}
\definecolor{xfigc78}{rgb}{0.824,0.824,0.824}
\definecolor{xfigc79}{rgb}{0.227,0.227,0.227}
\definecolor{xfigc80}{rgb}{0.271,0.451,0.667}
\definecolor{xfigc81}{rgb}{0.482,0.475,0.647}
\definecolor{xfigc82}{rgb}{0.451,0.459,0.549}
\definecolor{xfigc83}{rgb}{0.969,0.969,0.969}
\definecolor{xfigc84}{rgb}{0.255,0.271,0.255}
\definecolor{xfigc85}{rgb}{0.388,0.365,0.808}
\definecolor{xfigc86}{rgb}{0.745,0.745,0.745}
\definecolor{xfigc87}{rgb}{0.318,0.318,0.318}
\definecolor{xfigc88}{rgb}{0.906,0.890,0.906}
\definecolor{xfigc89}{rgb}{0.000,0.000,0.286}
\definecolor{xfigc90}{rgb}{0.475,0.475,0.475}
\definecolor{xfigc91}{rgb}{0.188,0.204,0.188}
\definecolor{xfigc92}{rgb}{0.255,0.255,0.255}
\definecolor{xfigc93}{rgb}{0.780,0.714,0.588}
\definecolor{xfigc94}{rgb}{0.867,0.616,0.576}
\definecolor{xfigc95}{rgb}{0.945,0.925,0.878}
\definecolor{xfigc96}{rgb}{0.886,0.784,0.659}
\definecolor{xfigc97}{rgb}{0.882,0.882,0.882}
\definecolor{xfigc98}{rgb}{0.855,0.478,0.102}
\definecolor{xfigc99}{rgb}{0.945,0.894,0.102}
\definecolor{xfigc100}{rgb}{0.533,0.490,0.761}
\definecolor{xfigc101}{rgb}{0.690,0.631,0.576}
\definecolor{xfigc102}{rgb}{0.514,0.486,0.867}
\definecolor{xfigc103}{rgb}{0.839,0.839,0.839}
\definecolor{xfigc104}{rgb}{0.549,0.549,0.647}
\definecolor{xfigc105}{rgb}{0.290,0.290,0.290}
\definecolor{xfigc106}{rgb}{0.549,0.420,0.420}
\definecolor{xfigc107}{rgb}{0.353,0.353,0.353}
\definecolor{xfigc108}{rgb}{0.388,0.388,0.388}
\definecolor{xfigc109}{rgb}{0.718,0.608,0.451}
\definecolor{xfigc110}{rgb}{0.255,0.576,1.000}
\definecolor{xfigc111}{rgb}{0.749,0.439,0.231}
\definecolor{xfigc112}{rgb}{0.859,0.467,0.000}
\definecolor{xfigc113}{rgb}{0.855,0.722,0.000}
\definecolor{xfigc114}{rgb}{0.000,0.392,0.000}
\definecolor{xfigc115}{rgb}{0.353,0.420,0.231}
\definecolor{xfigc116}{rgb}{0.827,0.827,0.827}
\definecolor{xfigc117}{rgb}{0.557,0.557,0.643}
\definecolor{xfigc118}{rgb}{0.953,0.725,0.365}
\definecolor{xfigc119}{rgb}{0.537,0.600,0.420}
\definecolor{xfigc120}{rgb}{0.392,0.392,0.392}
\definecolor{xfigc121}{rgb}{0.718,0.902,1.000}
\definecolor{xfigc122}{rgb}{0.525,0.753,0.925}
\definecolor{xfigc123}{rgb}{0.741,0.741,0.741}
\definecolor{xfigc124}{rgb}{0.827,0.584,0.322}
\definecolor{xfigc125}{rgb}{0.596,0.824,0.996}
\definecolor{xfigc126}{rgb}{0.380,0.380,0.380}
\definecolor{xfigc127}{rgb}{0.682,0.698,0.682}
\definecolor{xfigc128}{rgb}{1.000,0.604,0.000}
\definecolor{xfigc129}{rgb}{0.549,0.612,0.420}
\definecolor{xfigc130}{rgb}{0.969,0.420,0.000}
\definecolor{xfigc131}{rgb}{0.353,0.420,0.224}
\definecolor{xfigc132}{rgb}{0.549,0.612,0.420}
\definecolor{xfigc133}{rgb}{0.549,0.612,0.482}
\definecolor{xfigc134}{rgb}{0.094,0.290,0.094}
\definecolor{xfigc135}{rgb}{0.678,0.678,0.678}
\definecolor{xfigc136}{rgb}{0.969,0.741,0.353}
\definecolor{xfigc137}{rgb}{0.388,0.420,0.612}
\definecolor{xfigc138}{rgb}{0.871,0.000,0.000}
\definecolor{xfigc139}{rgb}{0.678,0.678,0.678}
\definecolor{xfigc140}{rgb}{0.969,0.741,0.353}
\definecolor{xfigc141}{rgb}{0.678,0.678,0.678}
\definecolor{xfigc142}{rgb}{0.969,0.741,0.353}
\definecolor{xfigc143}{rgb}{0.388,0.420,0.612}
\definecolor{xfigc144}{rgb}{0.322,0.420,0.161}
\definecolor{xfigc145}{rgb}{0.580,0.580,0.580}
\definecolor{xfigc146}{rgb}{0.000,0.388,0.000}
\definecolor{xfigc147}{rgb}{0.000,0.388,0.290}
\definecolor{xfigc148}{rgb}{0.482,0.518,0.290}
\definecolor{xfigc149}{rgb}{0.906,0.741,0.482}
\definecolor{xfigc150}{rgb}{0.647,0.710,0.776}
\definecolor{xfigc151}{rgb}{0.420,0.420,0.580}
\definecolor{xfigc152}{rgb}{0.518,0.420,0.420}
\definecolor{xfigc153}{rgb}{0.322,0.612,0.290}
\definecolor{xfigc154}{rgb}{0.839,0.906,0.906}
\definecolor{xfigc155}{rgb}{0.322,0.388,0.388}
\definecolor{xfigc156}{rgb}{0.094,0.420,0.290}
\definecolor{xfigc157}{rgb}{0.612,0.647,0.710}
\definecolor{xfigc158}{rgb}{1.000,0.580,0.000}
\definecolor{xfigc159}{rgb}{1.000,0.580,0.000}
\definecolor{xfigc160}{rgb}{0.000,0.388,0.290}
\definecolor{xfigc161}{rgb}{0.482,0.518,0.290}
\definecolor{xfigc162}{rgb}{0.388,0.451,0.482}
\definecolor{xfigc163}{rgb}{0.906,0.741,0.482}
\definecolor{xfigc164}{rgb}{0.094,0.290,0.094}
\definecolor{xfigc165}{rgb}{0.969,0.741,0.353}
\definecolor{xfigc166}{rgb}{0.000,0.000,0.000}
\definecolor{xfigc167}{rgb}{0.969,0.220,0.161}
\definecolor{xfigc168}{rgb}{0.000,0.000,0.000}
\definecolor{xfigc169}{rgb}{1.000,1.000,0.322}
\definecolor{xfigc170}{rgb}{0.322,0.475,0.290}
\definecolor{xfigc171}{rgb}{0.388,0.604,0.353}
\definecolor{xfigc172}{rgb}{0.776,0.380,0.259}
\definecolor{xfigc173}{rgb}{0.906,0.412,0.259}
\definecolor{xfigc174}{rgb}{1.000,0.475,0.322}
\definecolor{xfigc175}{rgb}{0.871,0.871,0.871}
\definecolor{xfigc176}{rgb}{0.953,0.933,0.827}
\definecolor{xfigc177}{rgb}{0.961,0.682,0.365}
\definecolor{xfigc178}{rgb}{0.584,0.808,0.600}
\definecolor{xfigc179}{rgb}{0.710,0.082,0.490}
\definecolor{xfigc180}{rgb}{0.933,0.933,0.933}
\definecolor{xfigc181}{rgb}{0.518,0.518,0.518}
\definecolor{xfigc182}{rgb}{0.482,0.482,0.482}
\definecolor{xfigc183}{rgb}{0.000,0.353,0.000}
\definecolor{xfigc184}{rgb}{0.906,0.451,0.451}
\definecolor{xfigc185}{rgb}{1.000,0.796,0.192}
\definecolor{xfigc186}{rgb}{0.161,0.475,0.290}
\definecolor{xfigc187}{rgb}{0.871,0.157,0.129}
\definecolor{xfigc188}{rgb}{0.129,0.349,0.776}
\definecolor{xfigc189}{rgb}{0.973,0.973,0.973}
\definecolor{xfigc190}{rgb}{0.902,0.902,0.902}
\definecolor{xfigc191}{rgb}{0.129,0.518,0.353}
\definecolor{xfigc192}{rgb}{0.906,0.906,0.906}
\definecolor{xfigc193}{rgb}{0.443,0.459,0.443}
\definecolor{xfigc194}{rgb}{0.851,0.851,0.851}
\definecolor{xfigc195}{rgb}{0.337,0.620,0.690}
\definecolor{xfigc196}{rgb}{0.788,0.788,0.788}
\definecolor{xfigc197}{rgb}{0.875,0.847,0.875}
\definecolor{xfigc198}{rgb}{0.969,0.953,0.969}
\definecolor{xfigc199}{rgb}{0.800,0.800,0.800}
\clip(888,-4242) rectangle (6492,-201);
\tikzset{inner sep=+0pt, outer sep=+0pt}
\pgfsetlinewidth{+7.5\XFigu}
\pgfsetfillcolor{.!5}
\filldraw (6480,-4230) [rounded corners=+105\XFigu] rectangle (900,-630);
\pgfsetfillcolor{.!10}
\filldraw (4950,-3780) [rounded corners=+105\XFigu] rectangle (1125,-855);
\pgfsetfillcolor{.}
\pgftext[base,left,at=\pgfqpointxy{3690}{-380}] {\fontsize{8}{8.4}\usefont{T1}{ptm}{m}{n}$B_{i-1}$};
\pgftext[base,left,at=\pgfqpointxy{1080}{-4050}] {\fontsize{10}{12}\usefont{T1}{ptm}{m}{n}$G_{i}$};
\pgftext[base,left,at=\pgfqpointxy{1350}{-3600}] {\fontsize{10}{12}\usefont{T1}{ptm}{m}{n}$G_{i-1}$};
\pgftext[base,left,at=\pgfqpointxy{5445}{-380}] {\fontsize{8}{8.4}\usefont{T1}{ptm}{m}{n}$B_{i}$};
\pgftext[base,left,at=\pgfqpointxy{1755}{-2160}] {\fontsize{8}{8.4}\usefont{T1}{ptm}{m}{n}$V_{i-1}\setminus B_{i-1}$};
\pgfsetlinewidth{+30\XFigu}
\draw (3510,-3690) [rounded corners=+105\XFigu] rectangle (4860,-990);
\pgfsetlinewidth{+7.5\XFigu}
\draw (3780,-990)--(3960,-450);
\pgfsetlinewidth{+30\XFigu}
\draw (6390,-2430) [rounded corners=+105\XFigu] rectangle (3600,-1080);
\pgfsetlinewidth{+7.5\XFigu}
\draw (5445,-1080)--(5625,-450);
\pgfsetfillcolor{.!15}
\pgfsetfillcolor{.}
\pgftext[base,left,at=\pgfqpointxy{3780}{-3465}] {\fontsize{8}{8.4}\usefont{T1}{ptm}{m}{n}$B''_{i-1}$};
\pgftext[base,left,at=\pgfqpointxy{5130}{-2115}] {\fontsize{8}{8.4}\usefont{T1}{ptm}{m}{n}$B_i\setminus B_{i-1}$};
\pgftext[base,left,at=\pgfqpointxy{3780}{-2115}] {\fontsize{8}{8.4}\usefont{T1}{ptm}{m}{n}$B'_{i-1}$};
\endtikzpicture%
\caption{$G_i$ and relevant subsets of $V_i$.}\label{fig1}
\end{center} 
\end{figure}
Our approach to determine 
$\pi_i(\sigma_i,\prec_i,\rho_i)$
relies on considering all candidates $\sigma''$
for the restriction of a function $\sigma$
as in the definition of $\pi_i(\sigma_i,\prec_i,\rho_i)$ to the set $\partial V_i\setminus B_i$, that is, $\sigma''=\sigma|_{\partial V_i\setminus B_i}$.

By (iv), 
this restriction satisfies
$\sigma''(\partial V_i\setminus B_i)\leq {t+1\choose 2}-\sigma_i(B_i)$.

In order to exploit the information encoded in the functions 
$\pi_{i-1}(\sigma_{i-1},\prec_{i-1},\rho_{i-1})$, 
we decouple the formation of the hull in $G_i$ within the two graphs $\partial G_i$ and $G_{i-1}$.
Accordingly, 
we consider all candidates for an extension $\prec_{(i-1,i)}$ of $\prec_i$ to $B_i\cup B_{i-1}$ 
specifying a possible order 
in which the vertices in $B_{i-1}\cup B_i$
enter the hull. 
By fixing the pairs $(\sigma'',\prec_{(i-1,i)})$, we specify that $\sigma_{i-1}=\sigma_i|_{B_{i-1}'}+\sigma''|_{B_{i-1}''}$, 
and that the restriction of $\prec_{(i-1,i)}$ to $B_{i-1}$ 
is the linear order on $B_{i-1}$.

Formally, we consider all pairs $(\sigma'',\prec_{(i-1,i)})$, where
\begin{itemize}
\item $\sigma''\in [t]_0^{\partial V_i\setminus B_i}$,
\item $\sigma''(\partial V_i\setminus B_i)\leq 
{t+1\choose 2}-\sigma_i(B_i)$
\item $\prec_{(i-1,i)}$ is a linear extension of $\prec_i$ to $B_i\cup B_{i-1}$.
\end{itemize}
Let $\mathcal{S}$ denote the set of these pairs. 
Note that $|\mathcal{S}|=O\left(n^{t(t+1)/2}(2t-2)!\right)=n^{O(t^2)}$.

Let $(\sigma'',\prec_{(i-1,i)})$ be in $\mathcal{S}$.

Assume that $v_1\prec_{(i-1,i)}\ldots\prec_{(i-1,i)}v_p$ is the linear order on $B_{i-1}\cup B_i$.

For every $j\in[p]$ with $v_j\in B_{i-1}\cup B_i$, 
let $h_j$ be the number of neighbors of $v_j$ in the hull of the set $\{v_\ell: \ell\in[j-1]\}$ in $$\Big(\partial G_i-\big\{v_\ell: \ell\in[p]\setminus[j-1] \big\}, \tau-\sigma_i-\sigma''\Big).$$
If the hull of the set 
$B_{i-1}\cup B_i$ in $(\partial G_i,\tau-\sigma_i-\sigma'')$ 
does not equal 
$\partial V_i$ 
or if $h_j<\tau(v_j)-\sigma_i(v_j)-\rho_i(v_j)$ 
for some $j\in[p]$ with $v_j\in B_i\setminus B_{i-1}$,
then we set all values of $s\big(\sigma'',\prec_{(i-1,i)}\big)$ to $\infty$.
Note that these two cases correspond to violations of the condition (v). 
Hence, in what follows, 
we may assume that these two cases do not hold; 
in particular, $h_j\geq\tau(v_j)-\sigma_i(v_j)-\rho_i(v_j)$ for every $v_j\in B_i\setminus B_{i-1}$. 

Let
\begin{itemize}
\item $\prec_{i-1}$ be the restriction of $\prec_{(i-1,i)}$ to $B_{i-1}$,
\item $\rho_{i-1}(v_j)=\rho_i(v_j)+h_j$ for every $j\in[p]$ with $v_j\in B_{i-1}'$, and
\item $\rho_{i-1}(v_j)=h_j$ for every $j\in[p]$ with $v_j\in B_{i-1}''$.
\end{itemize} 
Let 
$$s\big(\sigma'',\prec_{(i-1,i)}\big)=
\sigma_i|_{B_i\setminus B_{i-1}}
+\sigma''|_{\partial V_i\setminus (B_{i-1}\cup B_i)}
+\pi_{i-1}\big(\sigma_i|_{B{_{i-1}'}}+\sigma''|_{B_{i-1}''},\prec_{i-1}, \rho_{i-1}\big).$$
Note that also in this case some values of $s\big(\sigma'',\prec_{(i-1,i)}\big)$ can be $\infty$.
Note, furthermore, that,
for every $v_j\in B_{i-1}'$,
the value of $\rho_{i-1}(v_j)$
has a contributing term $\rho_i(v_j)$ 
quantifying the help from outside of $V_i$
as well as a contributing term $h_j$ 
quantifying the help from outside of $V_{i-1}$ 
but from inside of $V_i$.
For every $v_j\in B_{i-1}''$,
there is no help from outside of $V_i$, 
that is, the first term disappears.
In view of the above explanation, 
it now follows easily that 
the function $s\big(\sigma'',\prec_{(i-1,i)}\big)$ 
of minimum cost 
with $\big(\sigma'',\prec_{(i-1,i)}\big)$ in ${\mathcal{S}}$ 
is a suitable choice for $\pi_i(\sigma_i,\prec_i,\rho_i)$.
Since $\mathcal{S}$ has $n^{O(t^2)}$ elements, 
and each function $s\big(\sigma'',\prec_{(i-1,i)}\big)$ can be determined in $n^{O(1)}$ time, the claim follows.
\end{proof}

Since $k\leq n$,
and there are only $n^{O(t)}$ local cascades 
for each $G_i$,
the Claims \ref{claim3.1}-\ref{claim3.3} complete the proof of Theorem~\ref{thm:piinterval}.
\end{proof}

\subsection{Approximation algorithms for planar graphs}

First, we use Baker's \cite{ba} layering technique for constructing a PTAS for maximum degenerate sets in planar graphs.

\begin{proof}[Proof of Theorem \ref{thm:ptaskappa}]
Let $\epsilon$ and $(G,\kappa)$ be as in the statement.
We fix a plane embedding of $G$.
Let $L(0),L(1),\ldots,L(\ell)$ be the {\it layers} of $G$, that is, 
\begin{itemize}
\item $L(0)$ is the set of vertices on the outer face of $G$,
\item $L(i)$ is the set of vertices on the outer face of $G-(L(0)\cup\ldots\cup L(i-1))$ for $i\geq 1$,
and 
\item $V(G)=\bigcup\limits_{i=0}^\ell L(i)$. 
\end{itemize}
Let $k= \left\lceil\frac{1}{\epsilon}\right\rceil$.
For every $i\in [k-1]_0$,
the components of the graph $G-X(i)$, 
where $$X(i)=\bigcup\limits_{j=0}^{\left\lfloor\frac{\ell-i}{k}\right\rfloor}L(i+jk),$$
are the subgraphs $G(i,j)$ of $G$ induced by 
$$L\Big(i+(j-1)k+1\Big)\cup L\Big(i+(j-1)k+2\Big)\cup \ldots \cup L\Big(i+jk-1\Big)$$
for $j\in \left[\left\lfloor\frac{\ell-i-1}{k}\right\rfloor+1\right]_0$,
where we set $L(j):=\emptyset$ for $j<0$ or $j>\ell$.
Since each $G(i,j)$ is induced by at most $k$ consecutive layers, the treewidth of these graphs, and, hence, also of $G-X(i)$ is at most $3k-1$ \cite{bo}.
By the main result of Ben-Zwi et al.~\cite{behelone}, and by the duality of dynamic monopolies and degenerate sets,
we can determine in time $n^{O(k)}$
maximum $\kappa\mid_{V(G(i,j))}$-degenerate sets $I(i,j)$ in $G(i,j)$ 
for all $i\in [k-1]_0$ and all $j\in \left[ \left\lfloor\frac{\ell-i-1}{k}\right\rfloor+1\right]_0$.
Let 
$$I(i)=\bigcup\limits_{j=0}^{\left\lfloor\frac{\ell-i-1}{k}\right\rfloor+1}I(i,j)$$
and let
$I$ be the largest of the sets $I(0),\ldots,I(k-1)$.
Clearly, $I$ is a $\kappa$-degenerate set in $G$.

Let $I_{\max}$ be a maximum $\kappa$-degenerate set in $G$.
Since $|I_{\max}|=\sum\limits_{i=0}^{k-1}|I_{\max}\cap X(i)|$,
there is an index $i^*\in [k-1]_0$ with $|I_{\max}\cap X(i^*)|\leq \frac{1}{k}|I_{\max}|\leq \epsilon |I_{\max}|$.
Since $I_{\max}\cap V(G(i,j))$ is a $\kappa\mid_{V(G(i,j))}$-degenerate set in $G_{i,j}$, we obtain
\begin{eqnarray*}
|I|\geq |I(i^*)| &=& \sum\limits_{j=0}^{\left\lfloor\frac{\ell-i^*-1}{k}\right\rfloor+1}|I(i^*,j)|\\
&\geq &\sum\limits_{j=0}^{\left\lfloor\frac{\ell-i^*-1}{k}\right\rfloor+1}|I_{\max}\cap V(G(i^*,j))|\\
&= &|I_{\max}|-|I_{\max}\cap X(i^*)|\\
&\geq& (1-\epsilon)|I_{\max}|,
\end{eqnarray*}
which completes the proof.
\end{proof}

For the proof of Theorem~\ref{thm:apptw}, we adapt the approach of~\cite{aast}.

For a graph $G$ and a subset $A$ of vertices of $G$, let $N^+(A)=\bigcup_{v\in A} N_G(v)\setminus A$ be the {\it outside neighbors} of $A$, and let $N^-(A)=N^+(V(G)\setminus A)$ be the {\it boundary} of $A$.
For a threshold function $\tau$ for $G$ and two sets of vertices $A$ and $B$ of $G$, we call $B$ an {\it $A$-strong region} if $B\not\subseteq H_{(G,\tau)}(A\cup N^+(B))$.
Otherwise we call $B$ an {\it $A$-weak region}.

The following lemma is a straightforward generalization of Lemmas 3.3-3.5 in~\cite{aast}, and can be proved analogously.

\begin{lemma} \label{lem:app}
Let $G$ be a graph with threshold function $\tau$ and let $A$ and $B$ be  sets of vertices of $G$.
\begin{enumerate}[(i)]
\item $B$ is an $A$-strong region if and only if $B\cap(D\setminus A)\neq\emptyset$ for every dynamic monopoly $D$ of $(G,\tau)$ containing $A$.
\item If $B$ is an $A$-weak region with $N^-(B)\subseteq A$, then $B\subseteq H_{(G,\tau)}(A)$.
\item 
If $B$ is an $A$-strong region and $Y$ is a subset of $B$ with $Y\subseteq H_{(G,\tau)}(A)$ and $N^-(Y)\subseteq A$, then $B\setminus Y$ is an $A$-strong region.
\end{enumerate}
\end{lemma}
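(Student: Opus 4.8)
The plan is to exploit throughout that $D\mapsto H_{(G,\tau)}(D)$ is a monotone, idempotent closure operator; concretely I would use repeatedly that $S\subseteq T$ implies $H_{(G,\tau)}(S)\subseteq H_{(G,\tau)}(T)$, and that $S\subseteq T\subseteq H_{(G,\tau)}(S)$ implies $H_{(G,\tau)}(T)=H_{(G,\tau)}(S)$. For \emph{(i)} I would prove the equivalent statement characterizing when $B$ is an $A$-\emph{weak} region, establishing the two implications separately. For the first, if $B$ is $A$-weak I would exhibit the witness $D:=V(G)\setminus(B\setminus A)$: here $A\subseteq D$ and $D\cap B\subseteq A$, while $N^+(B)$ is disjoint from $B$ gives $A\cup N^+(B)\subseteq D$, so weakness and monotonicity yield $B\subseteq H_{(G,\tau)}(A\cup N^+(B))\subseteq H_{(G,\tau)}(D)$; together with $V(G)\setminus B\subseteq D$ this forces $H_{(G,\tau)}(D)=V(G)$, so $D$ is a dynamic monopoly containing $A$ with $B\cap(D\setminus A)=\emptyset$. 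Conversely, given any dynamic monopoly $D\supseteq A$ with $D\cap B\subseteq A$ (so that $B\setminus A=B\setminus D$), I would list the vertices of $B\setminus A$ as $u_1,u_2,\ldots$ in their order of entry into $H_{(G,\tau)}(D)$ and prove by induction that each $u_i$ lies in $H_{(G,\tau)}(A\cup N^+(B))$: when $u_i$ enters, each of its at least $\tau(u_i)$ neighbors already present is either outside $B$ (hence in $N^+(B)$), or in $B\cap D\subseteq A$, or an earlier $u_j$, so in all cases it belongs to $H_{(G,\tau)}(A\cup N^+(B))$, and therefore so does $u_i$. With $B\cap A\subseteq A$ this gives $B\subseteq H_{(G,\tau)}(A\cup N^+(B))$, i.e.\ $B$ is $A$-weak.

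For \emph{(ii)} I would run the same style of induction, now along the order in which the vertices of $B$ enter $H_{(G,\tau)}(A\cup N^+(B))$, which contains all of $B$ by weakness. If a vertex $v\in B$ has a neighbor outside $B$, then $v\in N^-(B)\subseteq A\subseteq H_{(G,\tau)}(A)$; otherwise every neighbor of $v$ lies in $B$ and entered earlier, so by the inductive hypothesis these neighbors already belong to $H_{(G,\tau)}(A)$, which forces $v\in H_{(G,\tau)}(A)$. Hence $B\subseteq H_{(G,\tau)}(A)$.

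Part \emph{(iii)} I expect to be cleanest through the characterization just proved in \emph{(i)}. Let $D\supseteq A$ be an arbitrary dynamic monopoly and set $D':=(D\setminus Y)\cup A$. Since $A\subseteq D'$ and $Y\subseteq H_{(G,\tau)}(A)\subseteq H_{(G,\tau)}(D')$, we obtain $D\subseteq D'\cup Y\subseteq H_{(G,\tau)}(D')$, whence $H_{(G,\tau)}(D')=H_{(G,\tau)}(D)=V(G)$, so $D'$ is again a dynamic monopoly containing $A$. Because $B$ is an $A$-strong region, \emph{(i)} gives $B\cap(D'\setminus A)\neq\emptyset$, and since $D'\setminus A=(D\setminus A)\setminus Y$ this rewrites as $(B\setminus Y)\cap(D\setminus A)\neq\emptyset$. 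As $D$ was arbitrary, applying \emph{(i)} in the reverse direction shows that $B\setminus Y$ is an $A$-strong region.

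The hard part will be the converse half of \emph{(i)}: it requires the careful percolation-order induction and the case analysis of which neighbor is used at each step, supported by the observation $B\setminus A=B\setminus D$. Once \emph{(i)} is in place, both \emph{(ii)} and \emph{(iii)} follow with essentially no extra work.
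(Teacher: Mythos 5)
Your proof is correct, and it is worth noting that the paper itself offers no proof of Lemma~\ref{lem:app}: it only remarks that the lemma is a straightforward generalization of Lemmas~3.3--3.5 of Aazami and Stilp~\cite{aast} and ``can be proved analogously'', so your self-contained argument supplies exactly what the paper leaves to the reader. Each step checks out. In (i), the witness $D=V(G)\setminus(B\setminus A)$ works because $A\cup N^+(B)\subseteq D$ and $V(G)\setminus B\subseteq D$, and the converse induction along the order of entry into $H_{(G,\tau)}(D)$ is sound; the key point is that any neighbor of $u_i$ already present lies in $N^+(B)$, or in $B\cap D\subseteq A$, or is an earlier $u_j$ (here you use both $A\subseteq D$ and $B\cap D\subseteq A$ to identify $B\setminus D$ with $B\setminus A$). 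The same percolation-order induction gives (ii); the only phrasing slip is that in the second case not \emph{every} neighbor of $v$ need have entered earlier --- rather, the at least $\tau(v)$ neighbors that triggered $v$'s entry did, and by the case assumption all of them lie in $B$, which is all the induction requires (and a vertex of $B$ present at time zero lies in $A$, since it cannot lie in $N^+(B)$). The most interesting divergence is (iii): your reduction to (i) via $D'=(D\setminus Y)\cup A$ never uses the hypothesis $N^-(Y)\subseteq A$, since the containments $Y\subseteq H_{(G,\tau)}(A)\subseteq H_{(G,\tau)}(D')$ and $D\subseteq D'\cup Y$ already force $H_{(G,\tau)}(D')=V(G)$; so you have in fact proved a slightly stronger statement than the one claimed, and the boundary hypothesis on $Y$ appears to be an artifact of the power-domination setting of~\cite{aast}, whose propagation rule differs from the threshold rule used here. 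This is a concrete benefit of routing (iii) through the monopoly characterization (i) rather than manipulating hulls of the form $H_{(G,\tau)}(A\cup N^+(B\setminus Y))$ directly.
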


\begin{proof}[Proof of Theorem~\ref{thm:apptw}]
Let $(G,\tau)$ be as in the statement and let $\left(T,(X_i)_{i\in V(T)}\right)$ be a rooted tree-decomposition of $G$ of width $w$. 
For simplicity, we may assume that $V(T)=[k]$ and that $1,\ldots,k$ is a reversed BFS-ordering of the nodes of $T$, which is rooted at $k$.
As in the definition of a tree-decomposition in subsection~\ref{subsec2.1}, for a node $i$ of $T$, let $G_i$ be the subgraph of $G$ induced by the bags of the node $i$ and all its descendants in $T$.
We describe how to compute a dynamic monopoly for $(G,\tau)$ in time $O(n^2m)$ with approximation guarantee $w+1$. 
Let $A_0=\emptyset,A_1,\ldots,A_k$ be sets of vertices of $G$, recursively defined for every $i\in[k]$, such that
$$A_i=
\begin{cases}
A_{i-1}\cup X_i& \mbox{, if $V(G_i)$ is an $A_{i-1}$-strong region, and}\\
A_{i-1} & \mbox{, if $V(G_i)$ is an $A_{i-1}$-weak region.}
\end{cases}$$

By the definition of a tree-decomposition, the boundary vertices of $G_i$ lie in the bag $X_i$, that is, $N^-(V(G_i))\subseteq X_i$, and for every child node $j$ of $i$, the boundary vertices of $G_{j}$ lie in $X_i\cap X_{j}$, that is, $N^-(V(G_{j}))\subseteq X_i\cap X_{j}$, and thus $N^-(V(G_{j}))\subseteq X_i$.

First, we show that the set $A_k$ is a dynamic monopoly for $(G,\tau)$.
\begin{claim*}
Let $i\in[k]$. If $V(G_i)$ is an $A_{i-1}$-strong region, then $V(G_i)\subseteq H_{(G,\tau)}(A_{i-1}\cup X_i)$.
\end{claim*}
\begin{proof}[Proof of the Claim]
We prove the claim by induction on the height of the subtree $T_i$.
The statement is true if $i$ is a leaf of $T$ since $V(G_i)=X_i$.
Hence, let $i_1,\ldots,i_{\ell}$ be the children of $i$ in $T$.
For every $j\in[\ell]$, when the algorithm examined $V(G_{i_j})$ either $V(G_{i_j})$ was $A_{i_j-1}$-weak or $A_{i_j-1}$-strong. 
\\ \noindent If $V(G_{i_j})$ was $A_{i_j-1}$-weak, by Lemma~\ref{lem:app} (ii), we have 
$$V(G_{i_j})\subseteq H_{(G,\tau)}\Big(A_{i_j-1}\cup N^-\left(V(G_{i_j})\right) \Big)
\subseteq H_{(G,\tau)}\Big(A_{i_j-1}\cup X_i \Big)
\subseteq H_{(G,\tau)}\Big(A_{i}\cup X_{i} \Big).$$
If $V(G_{i_j})$ was $A_{i_j-1}$-strong, by induction, we have 
$$V(G_{i_j})\subseteq H_{(G,\tau)}\left(A_{i_j-1}\cup X_{i_j}\right)\subseteq H_{(G,\tau)}\left(A_i\right).$$

Altogether, since $V(G_i)=X_i\cup\bigcup\limits_{j=1}^{\ell} V(G_{i_j})$, this proves the claim.
\end{proof}
We obtain that $A_k$ is a dynamic monopoly for $(G,\tau)$, because $V(G_k)=V(G)$ and
\begin{itemize}
\item either 
$V(G_k)$ is an $A_{k-1}$-strong region, thus, $A_k=A_{k-1}\cup X_k$ and by the above claim $V(G_k)\subseteq H_{(G,\tau)}(A_{k-1}\cup X_k)$, 
\item or $V(G_k)$ is an $A_{k-1}$-weak region, thus, $A_k=A_{k-1}$ and again by Lemma~\ref{lem:app}~(ii), $V(G_k)\subseteq H_{(G,\tau)}\big(A_{k-1}\cup N^-\left(V(G_k)\right)\big)=H_{(G,\tau)}(A_k)$ since $N^-(V(G_k))=\emptyset$.
\end{itemize}

\medskip
Let $i_1\leq i_2 \leq \ldots \leq i_{b}$ be all the nodes of $T$ that each corresponds to an $A_{i_j-1}$-strong region.
For every $j\in[b]$, let $B_j=V(G_{i_j})\setminus\left(\bigcup\limits_{\ell=1}^{j-1}V(G_{i_\ell})\right)$.
We claim that $B_j$ is an $A_{i_j-1}$-strong region for every $j\in[b]$.
For every $\ell\in[j-1]$, we have $N^-\left(V(G_{i_{\ell}})\right)\subseteq X_{i_{\ell}}\subseteq A_{i_j}$ and 
$V(G_{i_{\ell}})\subseteq H_{(G,\tau)}(A_{i_j})$.
This implies that 
$N^-\left(\bigcup\limits_{\ell=1}^{j-1} V(G_{i_{\ell}})\right)\subseteq A_{i_j}$ and 
$\bigcup\limits_{\ell=1}^{j-1} V(G_{i_{\ell}})\subseteq H_{(G,\tau)}(A_{i_j})$.
Hence, by Lemma~\ref{lem:app}~(iii), $B_j$ is $A_{i_j-1}$-strong.
By construction, the sets $B_1,\ldots,B_b$ are pairwise disjoint, and for each such set, a bag $X_{i_j}$ is added to $A_{i_j-1}$.
This implies that 
$$A_k\leq (w+1)b\leq (w+1){\rm dyn}(G,\tau),$$
since each bag has size at most $w+1$ and ${\rm dyn}(G,\tau)\geq b$, by Lemma~\ref{lem:app}~(i). 
This proves the approximation guarantee.

Finally, we show that the described algorithm can be implemented with running time $O(n^2m)$.
Recall that the given tree-decomposition has $O(n)$ nodes.
For each node $i$, it can be checked whether $V(G_i)$ is an $A_{i-1}$-strong region by constructing the hull $H_{(G,\tau)}\Big(A_{i-1}\cup N^+\big(V(G_i)\big)\Big)$ in time $O(nm)$.
This completes the proof.
\end{proof}

Corollary~\ref{cor:appplanar} follows easily from Theorem~\ref{thm:apptw} and the fact that for a planar graph, a tree-decomposition of width $O(\sqrt{n})$ can be found in time $O(n^{3/2})$~\cite{albofeklni}.

In fact, in~\cite{dvno} it was shown that if every subgraph of a graph $G$ with order $n$ has a balanced separator of size at most $k$ then $G$ has treewidth at most $105k$, where a balanced separator $S$ is a set of vertices of $G$ such that every component of $G-S$ has size at most $2n/3$.
The balanced separation number is the smallest integer $k$ fulfilling this condition.
Hence, for every graph of order $n$ and balanced separation number at most $O(n^{1-\epsilon})$, Theorem~\ref{thm:apptw} yields an approximation algorithm with ratio $O(n^{1-\epsilon})$, given the corresponding tree-decomposition.
Note that the result of Lipton and Tarjan~\cite{lita} immediately implies a balanced separation number of size $O(\sqrt{n})$ for planar graphs.

\end{document}